\documentclass[final,3p,times]{elsarticle}

\usepackage[hidelinks]{hyperref}
\usepackage{amsmath,amssymb}
\usepackage{mathtools}	
\usepackage{stmaryrd} 
\usepackage{bm}		  
\usepackage{accents}
\usepackage{cancel}
\usepackage[toc,page]{appendix}
\usepackage{subfigure}
\usepackage{booktabs}
\usepackage{epstopdf}
\usepackage{pgfplots}
\usepackage{tikz}
\usetikzlibrary{patterns,decorations.pathreplacing, pgfplots.fillbetween, intersections}
\usepackage{nicefrac}
\usepackage[T1]{fontenc}
\usepackage[utf8]{inputenc}

\newcommand{\avg}[1]{\left\{\hspace*{-1pt}\left\{#1\right\}\hspace*{-1pt}\right\}}
\newcommand{\jump}[1]{\ensuremath{\left\llbracket #1 \right\rrbracket}}

\newcommand\iprodN[1]{\left\langle #1\right\rangle_{\!N}}		

\renewcommand\vec[1]{\accentset{\,\rightarrow}{#1}}
\newcommand\spacevec[1]{\accentset{\,\rightarrow}{#1}}		
\newcommand\statevec[1]{\mathbf #1}					
\newcommand\statevecGreek[1]{\boldsymbol #1}			
\newcommand\acclrvec[1]{\accentset{\,\leftrightarrow}{#1}}	
\newcommand\bigstatevec[1]{\acclrvec{{\mathbf #1}}}	
\newcommand\bigcontravec[1]{\acclrvec{\tilde{\mathbf #1}}} 	

\newcommand\normalmatrix[1]{\underline{ #1}}		
%

\newcommand\dS{\,\operatorname{dS} }

\newtheorem{theorem}{Theorem}
\newtheorem{lemma}[theorem]{Lemma}
\newdefinition{remark}{Remark}
\newproof{proof}{Proof}

\journal{Journal of Computational Physics}

\newcommand{\orcid}[1]{\href{https://orcid.org/#1}{\includegraphics[width=10pt]{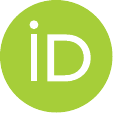}}}

\begin{document}

\begin{frontmatter}

\title{Entropy stable hydrostatic reconstruction schemes for shallow water systems}

\author[liu]{Patrick Ersing\corref{cor1}\orcid{0009-0005-3804-5380}}
\ead{patrick.ersing@liu.se}
\cortext[cor1]{Corresponding author}

\author[dlr]{Sven Goldberg\orcid{0009-0008-4419-3162}}

\author[liu]{Andrew R. Winters\orcid{0000-0002-5902-1522}}

\affiliation[liu]{organization={Linköping University, Department of Mathematics},
            city={Linköping},
            postcode={58183}, 
            country={Sweden}}
        
\affiliation[dlr]{organization={German Aerospace Center (DLR), Institute of Software Technology, Department of High-Performance Computing},
	addressline={Linder Höhe}, 
	city={Cologne},
	postcode={51147}, 
	country={Germany}}

\begin{abstract}
	In this work, we develop a new hydrostatic reconstruction procedure  to construct well-balanced schemes for one and multilayer shallow water flows, including wetting and drying.
	Initially, we derive the method for a path-conservative finite volume scheme and combine it with entropy conservative fluxes and suitable numerical dissipation to preserve an entropy inequality in the semi-discrete case.
	We then combine the novel hydrostatic reconstruction with a collocated nodal split-form discontinuous Galerkin spectral element method, extending the method to high-order and curvilinear meshes.
	The high-order method incorporates an additional positivity-limiter and is blended with a compatible subcell finite volume method to maintain well-balancedness at wet/dry fronts.
	We prove entropy stability, well-balancedness, and positivity-preservation for both methods.
	Numerical results for the high-order method validate the theoretical findings and demonstrate the robustness of the scheme.
\end{abstract}

\begin{keyword}
Multilayer shallow water equations \sep Discontinuous Galerkin method \sep Well-balanced \sep Wetting and drying \sep Entropy stability \sep Positivity-preserving

\MSC 65M12 \sep 65M20 \sep 65M70 \sep 76M22 \sep 35L60 \sep 86A05
\end{keyword}

\end{frontmatter}

\section{Introduction}
The shallow water equations (SWE) are a widely used model for free-surface flows, where horizontal length scales dominate the vertical scales as encountered in numerous environmental applications. The equations are derived from depth-averaging the incompressible Navier-Stokes equations and are given in one-spatial dimension by
\begin{equation}
	\left\{
	\begin{aligned}
		&\partial_t h + \partial_x hv = 0,\\
		&\partial_t hv + \partial_x hv^2 = -gh\partial_x\left(h + b\right),
	\end{aligned}\right.
	\label{eq:system_swe}
\end{equation}
where $h \coloneqq h(x,t)$ denotes the water height, $v \coloneqq v(x,t)$ the velocity, $g$ the gravitational acceleration and $b \coloneqq b(x)$ a prescribed bottom topography.

An extension for stratified flows is given by the multilayer shallow water equations (ML-SWE) \cite{frings2012adaptive}, that describe the flow of immiscible layers with different densities.
\begin{equation}
	\left\{
	\begin{aligned}			
		&\partial_t h_m + \partial_x h_mv_m = 0,\\
		&\partial_t h_mv_m + \partial_x h_mv_m^2 = -gh_m\partial_x \bigg(b + \sum\limits_{k\geq m}h_k + \sum\limits_{k<m}\sigma_{km}h_k \bigg),
	\end{aligned}\right.
	\label{eq:system_multilayer}
\end{equation}
where $m=1,..,M$ denotes the respective layer, $\rho_m$ the layer density and $\sigma_{km} \coloneqq \frac{\rho_k}{\rho_m}$ is the quotient of layer densities. 
We adopt the convention to count layers from top to bottom, such that $0 < \rho_1 < ... < \rho_M$, as depicted in Figure~\ref{fig:multilayer_system}.
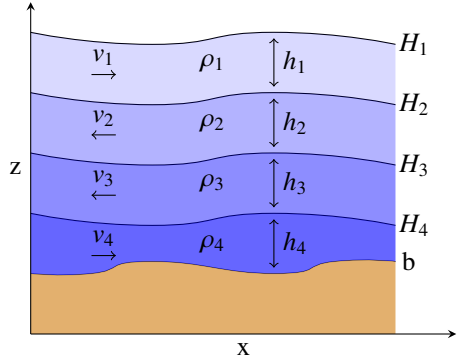
\begin{figure}
	\centering
	\begin{tikzpicture}[scale=0.8]

\draw [draw=white, opacity=0.0, name path = x0] (0,-1) -- (6, -1);
\draw [draw=black, name path=bottom] plot [smooth, tension=1] coordinates {(0,0) (1, 0) (2,0.2) (4,0.0) (5, 0.2) (6,0.2)};
\draw [draw=black, name path=layer4] plot [smooth, tension=1] coordinates {(0,1.0) (2, 0.8) (4, 1.0) (6, 0.8)};
\draw [draw=black, name path=layer3] plot [smooth, tension=1] coordinates {(0,2.0) (2, 1.8) (4, 2.0) (6, 1.8)};
\draw [draw=black, name path=layer2] plot [smooth, tension=1] coordinates {(0,3.0) (2, 2.8) (4, 3.0) (6, 2.8)};
\draw [draw=black, name path=layer1] plot [smooth, tension=1] coordinates {(0,4.0) (2, 3.8) (4, 4.0) (6, 3.8)};

\definecolor{earthyellow}{rgb}{0.88, 0.66, 0.37}
\tikzfillbetween[of=layer1 and layer2]{blue, opacity=0.15}
\tikzfillbetween[of=layer2 and layer3]{blue, opacity=0.3}
\tikzfillbetween[of=layer3 and layer4]{blue, opacity=0.45}
\tikzfillbetween[of=bottom and layer4]{blue, opacity=0.6}
\tikzfillbetween[of=x0 and bottom]{earthyellow, opacity=0.9}

\draw [->, -stealth] (0.0,-1) -- (7.0,-1) node[pos=0.5, below]{x};
\draw [->, -stealth] (0.0,-1) -- (0.0, 4.5) node[pos=0.5, left]{z};

\node[xshift=5] at (6.0,0.2) {b};
\node[xshift=7] at (6.0,0.8) {$H_4$};
\node[xshift=7] at (6.0,1.8) {$H_3$};
\node[xshift=7] at (6.0,2.8) {$H_2$};
\node[xshift=7] at (6.0,3.8) {$H_1$};

\draw[<->] (4,0.1) -- (4,0.9) node[pos=0.5, right]{$h_4$};
\draw[<->] (4,1.1) -- (4,1.9) node[pos=0.5, right]{$h_3$};
\draw[<->] (4,2.1) -- (4,2.9) node[pos=0.5, right]{$h_2$};
\draw[<->] (4,3.1) -- (4,3.9) node[pos=0.5, right]{$h_1$};

\draw[->] (1,0.3) -- (1.4,0.3) node[pos=0.5, above]{$v_4$};
\draw[->] (1.4,1.3) -- (1.0,1.3) node[pos=0.5, above]{$v_3$};
\draw[->] (1.4,2.3) -- (1.0,2.3) node[pos=0.5, above]{$v_2$};
\draw[->] (1,3.3) -- (1.4,3.3) node[pos=0.5, above]{$v_1$};

\node at (3,0.5) {$\rho_4$};
\node at (3,1.5) {$\rho_3$};
\node at (3,2.5) {$\rho_2$};
\node at (3,3.5) {$\rho_1$};

\end{tikzpicture}
	\caption{Illustration of the multilayer shallow water system with four layer and bottom topography.}
	\label{fig:multilayer_system}
\end{figure}
Each layer of the model corresponds to a set of SWEs with an additional coupling term.
Accordingly, the model reduces to the SWE for $M=1$.
Despite this close resemblance, the stratification in the ML-SWE introduces additional difficulties for the numerical treatment, as the system is only conditionally hyperbolic and introduces nonconservative products through the coupling terms between layers.

An important property for numerical approximations of shallow water models is the preservation of steady state solutions, most importantly the so called lake-at-rest condition given for the SWE and the ML-SWE by
\begin{equation}
	h\partial_x(h+b) = 0, \quad v = 0
	\label{eq:lake-at-rest_swe},
\end{equation}
\begin{equation}
	h_m\partial_x\left(b + \sum\limits_{k\geq j}h_k + \sum\limits_{k<m}\sigma_{km}h_k\right) = 0, \quad v_m = 0, \quad m=1,...,M,
	\label{eq:lake-at-rest_multilayer}
\end{equation}
respectively.

Preservation of this steady state is important as solutions to these equations, typically resemble small perturbations from the lake-at-rest. Numerical schemes that satisfy this lake-at-rest condition are said to be well-balanced. 
Violations of this condition lead to artificial waves on the order of the grid spacing that pollute the physically meaningful solution.
The preservation of these steady states is especially challenging in the presence of wet/dry transitions, where the hydrostatic pressure may become discontinuous at the transition point.

For the SWE well-balancedness is commonly achieved by a hydrostatic reconstruction procedure \cite{audusse2004fast}.
In this approach, the bottom topography is approximated by a piecewise linear reconstruction to recover a continuous water height and bottom topography at interfaces and, thereby, obtain the well-balanced property.
For the SWE this technique is well-understood and many improved reconstructions have been proposed to deal with possible reflections or the waterfall effect as discussed in \cite{xia2017efficient}.
Furthermore, in \cite{fu2022high, xing2013positivity} the technique has been successfully applied to obtain well-balanced discontinuous Galerkin (DG) methods.

For the ML-SWE, on the other hand, the application of the hydrostatic reconstruction becomes more difficult.
Through the introduction of density ratios $\sigma$, a direct exchange of bottom topography and water height with regards to the hydrostatic pressure is no longer possible.
Instead, contemporary approaches typically reconstruct each layer independently to retain well-balancedness, see \cite{martinez20201d, dudzinski2013well, castro2005numerical}.
However, the physical motivation and implications behind these reconstructions often remain unclear.
As observed for the SWE in \cite{audusse2004fast,chen2017new} the reconstruction procedure typically violates entropy conservation, which needs to be balanced with sufficient dissipation.
This is important to ensure correct shock speeds according to the Rankine-Hugoniot conditions and nonlinear stability of the scheme \cite{merriam1989entropy}.
While entropy stable hydrostatic reconstruction methods are common for the SWE, to the best of our knowledge, there are no results about entropy stability for the ML-SWE. We aim to fill this gap in the current paper by developing novel entropy stable hydrostatic reconstruction methods.

In recent years, several works have considered positivity-preserving DG methods to obtain high-order approximations for the SWE \cite{fu2022high,wintermeyer2018entropy,xing2013positivity,wu2021entropy, bonev2018discontinuous, ranocha2017shallow}.
In addition to their favorable dispersion and dissipation properties, the polynomial representation allows for a natural approximation of the bottom topography and can be used to create high-order approximations.
In the presence of wet/dry transitions, positivity preservation is often achieved through positivity-limiting \cite{xing2014survey} or a subcell-limiting procedure \cite{wu2021high}. 
However, the high-order nature of these methods incurs additional difficulty to preserve steady states if wet/dry transitions occur.
As a consequence, it is a common limitation that methods require the wet/dry transition to be aligned with element interfaces to achieve well-balancedness, as reported in \cite{wu2021entropy, wintermeyer2018entropy, fu2022high}. 
A solution to this was proposed in \cite{bonev2018discontinuous}, where the authors obtained well-balancedness by introducing a first-order approximation at partially dry elements.

In this work we present a new hydrostatic reconstruction method that can be applied for one and multilayer shallow water equations. 
We begin with a specific formulation of the hydrostatic pressure term in system \eqref{eq:system_multilayer} to obtain a direct representation of the lake-at-rest condition in the nonconservative term.
The reconstruction procedure is constructed in a special way and combined with two-point entropy stable fluxes to obtain a method that is positivity-preserving, well-balanced, and entropy stable for both SWE and ML-SWE.
In contrast to previous hydrostatic reconstruction methods, we present a novel mechanism to obtain well-balancedness. Furthermore, we show that a specific reconstruction choice yields a discretization without introducing any additional nonconservative terms. 

We first derive the new hydrostatic reconstruction method from the perspective of a path-conservative, first-order finite volume (FV) method in Section \ref{sec:FV_scheme}, where we show entropy stability, well-balancedness, and positivity-preservation.
To clarify the mechanisms behind the new reconstruction, we first demonstrate these properties for the SWE, before we consider the ML-SWE.
After introducing the low-order FV scheme, we demonstrate in Section \ref{sec:DG_method} how the hydrostatic reconstruction method can be extended to a high-order DG approximation on unstructured curvilinear meshes.
We discuss how to construct the high-order method by using a flux-differencing formulation in combination with shock-capturing and a positivity-limiter to preserve positivity, well-balancedness, and entropy stability.
Numerical results in Section \ref{sec:results} verify the theoretical results on several numerical test cases for both SWE and ML-SWE.
We draw conclusions in the final section.

\section{Finite volume hydrostatic reconstruction scheme}\label{sec:FV_scheme}
In this section we describe the novel hydrostatic reconstruction scheme that is based on a path-conservative first-order FV method. 
Therefore, we introduce the following general notation for both shallow water systems \eqref{eq:system_swe} and \eqref{eq:system_multilayer}
\begin{equation}
	\statevec{u}_t + \partial_x \statevec{f}(\statevec{u}) + \statevecGreek{\phi}(\statevec{u})\circ\partial_x\statevec{r}(\statevec{u}) = 0,
	\label{eq:balance_law_continuous}
\end{equation}
with the conserved variables $\statevec{u} = (\statevec{u}^T_1,...,\statevec{u}^T_M)^T$, the conservative flux $\statevec{f} = (\statevec{f}^T_1, ..., \statevec{f}^T_M)^T$ and the nonconservative term given by the Hadamard product of $\statevecGreek{\phi} = (\statevecGreek{\phi}^T_1, ..., \statevecGreek{\phi}^T_M)^T$ and $\statevec{r} = (\statevec{r}^T_1, ..., \statevec{r}^T_M)^T$. In each layer $m$ the components of these quantities are
\begin{equation}
	\begin{aligned}
		\statevec{u}_m = \begin{pmatrix} h_m \\ h_mv_m\end{pmatrix}, \quad
		\statevec{f}_m = \begin{pmatrix} h_mv_m \\ h_mv_m^2 \end{pmatrix}, \quad 
		\statevecGreek{\phi}_m = g\begin{pmatrix} 0 \\ h_m \end{pmatrix}, \quad
		\statevec{r}_m = \begin{pmatrix} 0 \\ b + \sum\limits_{k\geq m}h_k + \sum\limits_{k<m}\sigma_{km}h_k \end{pmatrix},
	\end{aligned}
\end{equation}
where $m=1,...,M$ for the ML-SWE and $m=1$ for the SWE. In this formulation the hydrostatic pressure term has been peeled of as a nonconservative term, while the conservative flux consists only of advective terms. This allows for a direct recovery of the pressure balance with respect to the lake-at-rest condition \eqref{eq:lake-at-rest_multilayer} in the nonconservative term to obtain well-balancedness in the discrete case.

\subsection{First-order path-conservative FV scheme}
We initially develop a first-order accurate, path-conservative FV scheme on one-dimensional domains.
Therefore, we divide the domain $\Omega$ into $K$ non-overlapping elements $E_i = [x_{i-\frac{1}{2}}, x_{i+\frac{1}{2}}]$ with size $\Delta x$.
To derive the FV scheme, we integrate \eqref{eq:balance_law_continuous} over a single element $E$ to have
\begin{equation}
	\begin{aligned}
		\int_E \statevec{u}_t\,\operatorname{dx} &= -\int_E \partial_x\statevec{f} + \statevecGreek{\phi}\circ\partial_x\statevec{r}\,\operatorname{dx}.\\
	\end{aligned}	
\end{equation}
We then apply Gauss' Law to obtain a surface integral for the conservative flux and replace the advective flux $\statevec{f}$ at element interfaces with a numerical flux $\statevec{f}^*$, that is uniquely defined at discontinuities.
At each interface we introduce an outward pointing normal vector $\spacevec{n}$. 
Even though we only consider the one-dimensional setting for now, we want to keep the discussion general for extension to a formulation on two-dimensional unstructured meshes.
So, we denote the primary element at each interface as $\boldsymbol{-}$ and the secondary element as $\boldsymbol{+}$, ensuring the normal vector points from $\boldsymbol{-}$ to $\boldsymbol{+}$ with the relation
\begin{equation}
	\spacevec{n} = \spacevec{n}^- = - \spacevec{n}^+,
\end{equation}
where in the one-dimensional setting $\spacevec{n}$ reduces to a scalar $\spacevec{n} \in \{-1,1\}$.

Furthermore, we introduce the following notation to denote arithmetic averages and jumps at interfaces as
\begin{equation}
	\avg{\cdot} = \frac{(\cdot)^+ + (\cdot)^-}{2}, \quad
	\jump{\cdot} = (\cdot)^+ - (\cdot)^-. 
\end{equation}
To obtain a path-conservative scheme analogous to \cite{pares2006numerical, franquet2012runge} we replace the nonconservative term with a Borel measure, that corresponds to classical integration for smooth solutions, but requires evaluation as a path integral at discontinuities. 
Accordingly, the volume integral of the nonconservative term remains unchanged, while we introduce a surface numerical nonconservative term at the interface to obtain the discretization
\begin{equation}
	\statevec{u}_t = -\frac{1}{\Delta x} \oint_{\partial E} \left(\statevec{f^*} + \left(\statevecGreek{\phi}\circ \statevec{r}\right)^{\diamond}\right)\cdot \vec{n} \operatorname{dS}
	-\frac{1}{\Delta x}\int_E \statevecGreek{\phi}\circ\partial_x\statevec{r}\,\operatorname{dx}.
	\label{eq:fv_scheme}
\end{equation}
This surface numerical nonconservative term satisfies the consistency condition 
\begin{equation}
	\left(\statevecGreek{\phi}\circ \statevec{r}\right)^{\diamond}(\statevec{u},\statevec{u}) = 0,
\end{equation}
as well as the path-conservation property \cite{pares2006numerical}. That is, when summed over each interface
\begin{equation}
	\left(\statevecGreek{\phi}\circ\statevec{r}\,\right)^{\diamond}(\statevec{u}^-,\statevec{u}^+)\cdot\spacevec{n}^{\,-}
	+
	\left(\statevecGreek{\phi}\circ\statevec{r}\,\right)^{\diamond}(\statevec{u}^+,\statevec{u}^-)\cdot\spacevec{n}^{\,+} 
	=
	\int\limits_0^1 {\statevec{A}}^{NC}(\upsilon(s;\statevec{u}^-,\statevec{u}^+))\partial_s \upsilon(s;\statevec{u}^-,\statevec{u}^+)\operatorname{ds}.
	\label{eq:path_conservative_property}
\end{equation}
Here, the right hand side denotes a path integral determined by the flux Jacobian of the nonconservative subsystem $\statevec{A}^{NC}$, where $\upsilon$ represents a path connecting adjacent states $\statevec{u}^-,\statevec{u}^+$.
In this work we consider the numerical nonconservative term from \cite{ersing2024entropy} satisfying these conditions, which is given by
\begin{equation}
	\left(\statevecGreek{\phi}\circ\statevec{r}\,\right)^{\diamond}(\statevec{u}^-,\statevec{u}^+) = \frac{\statevecGreek{\phi}^-}{2} \circ \jump{\statevec{r}}.
	\label{eq:nonconservative_flux}
\end{equation}

From the FV discretization \eqref{eq:fv_scheme}, we now want to recover a well-balanced and entropy stable scheme. 
In the fully wet setting it is possible to achieve both properties directly through a specific combination of the conservative flux and nonconservative term as demonstrated in \cite{ersing2024entropy, fjordholm2012energy, winters2015comparison}.
However, to ensure well-balancedness and positivity preservation under wet-dry transitions, instead we introduce a novel hydrostatic reconstruction.

\subsection{Hydrostatic Reconstruction Method}
Inspired by the subcell interpretation from \cite{chen2017new}, we derive the new piecewise linear hydrostatic reconstruction. At each interface we introduce infinitesimal subcells $\hat{E}^{\pm}_{i+\nicefrac{1}{2}}$ in adjacent elements. Consequently, each element is now composed of three subcells
\begin{equation}
	E_i \coloneqq \hat{E}^{\epsilon,+}_{i-\frac{1}{2}} \cup {E}^{\epsilon}_{i} \cup \hat{E}^{\epsilon,-}_{i+\frac{1}{2}} =
	\left[ x_{i-\frac{1}{2}}, x_{i-\frac{1}{2}} + \epsilon\right]
	\cup
	\left[ x_{i-\frac{1}{2}} + \epsilon, x_{i+\frac{1}{2}} - \epsilon\right]
	\cup
	\left[ x_{i+\frac{1}{2}} - \epsilon, x_{i+\frac{1}{2}}\right],
	\label{eq:definition_subcells}
\end{equation}
where $\epsilon$ represents an infinitely small number.

Within each infinitesimal subcell we then define a linear reconstruction of the bottom topography $\hat{b}^{\pm}_{i+\nicefrac{1}{2}}(x)$ between the inner value $b_{i+\nicefrac{1}{2}}^{\pm}$ and some boundary value $b_{\epsilon}^{\pm}$.
This yields the piecewise linear and possibly discontinuous reconstruction of the bottom topography
\begin{equation}
	b_{\epsilon}(x) \coloneqq 
	\begin{cases}
		b_i & \text{if} \quad x \in E_{i}^{\epsilon}, \\
		\hat{b}^{\pm}_{i+\frac{1}{2}}(x) & \text{if} \quad x \in \hat{E}^{\epsilon,\pm}_{i+\frac{1}{2}}.
	\end{cases}
\end{equation}
The choice of the boundary value $b^{\pm}_{\epsilon}$ now specifies the hydrostatic reconstruction, offering the flexibility to recover different well-known hydrostatic reconstruction schemes as demonstrated in \cite{chen2017new}.
A common strategy to obtain the well-balanced property is to choose a continuous reconstruction of the bottom topography to ensure that the nonconservative term vanishes, while the conservative fluxes are balanced under lake-at-rest conditions.
However, enforcing continuous bottom topographies by the reconstruction can lead to entropy violations, that must be countered by suitable dissipation \cite{chen2017new, audusse2004fast}.

Instead, we introduce a novel reconstruction of the bottom topography, which recovers a continuous bottom topography in fully wet scenarios or a discontinuous reconstruction at dry states. 
In combination with entropy conservative fluxes, the new reconstruction will be shown to conserve entropy for the SWE and allows us to derive entropy stable reconstructions for both SWE and ML-SWE.
Furthermore, with the nonconservative formulation of the pressure term in \eqref{eq:system_swe} and \eqref{eq:system_multilayer}, the new approach introduces no additional reconstruction terms. This improves efficiency of the method and makes the required adaptations for existing FV codes very simple.

The new reconstruction is defined by setting the boundary values for the bottom topography to be
\begin{equation}
	b_{\epsilon}^{\pm} \coloneqq \min\left(H_1^{\pm},\max(b^-,b^+)\right),
	\label{eq:reconstruction_bottom}
\end{equation}
where we introduced the total layer height $H_m \coloneqq \sum_{k\geq m} h_k + b$.
In each element we then reconstruct the total layer heights to ensure positivity  
\begin{equation}
	H_{m,\epsilon}(x) \coloneqq \max\left(H_{m,i}, b_{\epsilon}(x)\right), \quad \forall x \in E_i.
	\label{eq:reconstruction_layerheight}
\end{equation}
We recover the water height in each layer from
\begin{equation}
	h_{m,\epsilon}(x) =
	\begin{cases}
		H_{M,\epsilon}(x) - b_{\epsilon}(x) & \text{if}\quad m=M,\\
		H_{m,\epsilon}(x) - H_{(m+1),\epsilon}(x) &\text{otherwise}.
	\end{cases}
	\label{eq:reconstruction_waterheight}
\end{equation}

Note that by taking the maximum in \eqref{eq:reconstruction_layerheight}, for the ML-SWE both $H_{m,\epsilon}(x)$ and $h_{m,\epsilon}(x)$ are only piecewise linear even within each infinitesimal subcell, which is important in the proof of Lemma \ref{lemma:vanishing_hr_terms} below.

After the hydrostatic reconstruction procedure \eqref{eq:reconstruction_bottom} -- \eqref{eq:reconstruction_waterheight} the reconstructed conservative variables are
\begin{equation}
	\statevec{u}_{m,\epsilon}(x) \coloneqq \begin{pmatrix}
		h_{m,\epsilon}(x) \\ h_{m,\epsilon}(x) (v_m)_i 
	\end{pmatrix},
	\quad \forall x \in E_i.
	\label{eq:reconstruction_conservative_variables}
\end{equation}
We evaluate \eqref{eq:fv_scheme} in terms of the reconstructed variables to obtain a preliminary version of the hydrostatic reconstruction scheme
\begin{equation}
	\statevec{u}_t = -\frac{1}{\Delta x} 	\oint\limits_{\partial E} ({\statevec{f}}_{\epsilon}^* + ({\statevecGreek{\phi}}\circ 	{\statevec{r}})^{\diamond}_{\epsilon})\cdot \vec{n} \operatorname{dS}
	+ 	\frac{1}{\Delta x} \int\limits_{E}\statevecGreek{\phi_{\epsilon}}\circ{\partial_x\statevec{r}_{\epsilon}}\,\operatorname{dx},
	\label{eq:HR_scheme_with_vol_terms}
\end{equation}
where $\statevec{f}_{\epsilon}^*\coloneqq\statevec{f}^*(\statevec{u}_{\epsilon}^-,\statevec{u}_{\epsilon}^+)$ and $({\statevecGreek{\phi}}\circ {\statevec{r}})^{\diamond}_{\epsilon}\coloneqq({\statevecGreek{\phi}}\circ {\statevec{r}})^{\diamond}(\statevec{u}_{\epsilon}^-,\statevec{u}_{\epsilon}^+)$.

Compared to standard FV formulations the piecewise linear reconstruction of conservative variables and bottom topography generates additional volume contributions from the nonconservative term, as commonly encountered in other hydrostatic reconstruction schemes, e.g., in \cite{audusse2004fast, chen2017new, dong2023surface}.
However, we next demonstrate that the new reconstruction in combination with the nonconservative pressure formulation in \eqref{eq:system_multilayer} ensures that these additional terms vanish and no additional terms need to be introduced for the reconstruction.
\begin{lemma}
	\label{lemma:vanishing_hr_terms}
	The hydrostatic reconstruction scheme \eqref{eq:HR_scheme_with_vol_terms} simplifies to
	\begin{equation}
		\statevec{u}_t = -\frac{1}{\Delta x} 	\oint_{\partial E} ({\statevec{f}}_{\epsilon}^* + ({\statevecGreek{\phi}}\circ 	{\statevec{r}})^{\diamond}_{\epsilon})\cdot \vec{n} \operatorname{dS}
		\label{eq:hydrostatic_reconstruction_scheme}
	\end{equation} 
	as the volume integral of the nonconservative term vanishes for both SWE \eqref{eq:system_swe} and ML-SWE \eqref{eq:system_multilayer}, when the new hydrostatic reconstruction is used.
\end{lemma}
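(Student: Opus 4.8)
The plan is to reduce the vector identity to $M$ scalar statements and then localise. The only nonzero (second) component of the layer-$m$ volume integrand is $g\,h_{m,\epsilon}\,\partial_x p_{m,\epsilon}$, where I write $p_{m,\epsilon}$ for the second component of the reconstructed $\statevec{r}_{m,\epsilon}$; by the telescoping of \eqref{eq:reconstruction_waterheight}, $b_\epsilon+\sum_{k\ge m}h_{k,\epsilon}=H_{m,\epsilon}$, so that $p_{m,\epsilon}=H_{m,\epsilon}+\sum_{k<m}\sigma_{km}h_{k,\epsilon}$. First I would discard the interior subcell $E_i^{\epsilon}$, where $b_\epsilon\equiv b_i$ makes every reconstructed quantity constant and hence $\partial_x p_{m,\epsilon}=0$, so that only the two infinitesimal boundary subcells can contribute. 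On each of those $b_\epsilon(x)$ is affine in $x$, and---as stressed in the remark preceding the lemma---$H_{m,\epsilon}$, $h_{m,\epsilon}$ and $p_{m,\epsilon}$ are continuous, piecewise-linear functions of the single scalar $\beta:=b_\epsilon(x)$, with the cell values $H_{k,i}$ frozen.

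The core is a pointwise cancellation in $\beta$ driven by the ordering $H_{1,i}\ge\cdots\ge H_{M,i}\ge b_i$, which holds because $h_{k,i}\ge0$. I would split the $\beta$-range at the level $H_{m,i}$. For $\beta\ge H_{m,i}$ both reconstructed interfaces bounding layer $m$ collapse onto $\beta$ by \eqref{eq:reconstruction_layerheight}, so \eqref{eq:reconstruction_waterheight} gives $h_{m,\epsilon}(\beta)=0$. For $\beta<H_{m,i}$ I claim $p_{m,\epsilon}$ is constant: here $H_{m,\epsilon}=\max(H_{m,i},\beta)=H_{m,i}$, and every $k<m$ satisfies $\beta<H_{m,i}\le H_{k+1,i}\le H_{k,i}$, so $h_{k,\epsilon}=H_{k,i}-H_{k+1,i}=h_{k,i}$ reduces to the frozen cell value, whence $p_{m,\epsilon}=H_{m,i}+\sum_{k<m}\sigma_{km}h_{k,i}$ and $\partial_\beta p_{m,\epsilon}=0$. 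The product $h_{m,\epsilon}(\beta)\,\partial_\beta p_{m,\epsilon}(\beta)$ therefore vanishes identically, and by the chain rule $h_{m,\epsilon}\,\partial_x p_{m,\epsilon}=h_{m,\epsilon}\,\partial_\beta p_{m,\epsilon}\,\partial_x b_\epsilon=0$ pointwise a.e.\ on each boundary subcell, the kink at $\beta=H_{m,i}$ being a single point.

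Summing the vanishing boundary-subcell and interior contributions gives $\int_{E_i}\statevecGreek{\phi}_{\epsilon}\circ\partial_x\statevec{r}_{\epsilon}\,\operatorname{dx}=0$ for every layer, which reduces \eqref{eq:HR_scheme_with_vol_terms} to \eqref{eq:hydrostatic_reconstruction_scheme}; the SWE follows as $M=1$, where $p_{1,\epsilon}=H_{1,\epsilon}$ and the bound $b_\epsilon^{\pm}\le H_1^{\pm}$ from \eqref{eq:reconstruction_bottom} even forces $\beta\le H_{1,i}$, so that the reconstructed free surface $H_{1,\epsilon}\equiv H_{1,i}$ is globally constant. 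I expect the main obstacle to be pinpointing the structural reason for the cancellation: one must recognise that the density ratios $\sigma_{km}$ couple into $p_{m,\epsilon}$ only through the overlying layers $k<m$, whose reconstructed thicknesses stay frozen at their cell values exactly while $\beta<H_{m,i}$, i.e.\ precisely on the set where layer $m$ is still wet. Turning the heuristic ``the pressure argument begins to vary only after layer $m$ has dried out'' into the clean identity above, and verifying it uniformly for the bottom layer $m=M$---where $p_{M,\epsilon}$ carries no explicit $b_\epsilon$ even though the multiplier $h_{M,\epsilon}=H_{M,\epsilon}-b_\epsilon$ does vary for $\beta<H_{M,i}$---is the single place where the $\max$-based piecewise-linear reconstruction and the layer ordering must be used together.
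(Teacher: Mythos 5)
Your proof is correct, and it reaches the paper's conclusion through a genuinely different bookkeeping of the same cancellation. The paper splits each infinitesimal boundary subcell \emph{spatially} into subintervals $\epsilon_1,\dots,\epsilon_M$ delimited by the successive wet/dry transition points (the kinks of the reconstructed heights), and argues for the layer transitioning on each subinterval that lower layers are dry, $h_{m,\epsilon}+b_\epsilon$ is constant, and overlying layers are frozen, so that $\statevec{r}_{m,\epsilon}$ is piecewise constant; the SWE is handled separately beforehand via preservation of the free surface. You instead fix a layer $m$, parametrize the subcell by $\beta=b_\epsilon(x)$, and establish a pointwise dichotomy at the single threshold $H_{m,i}$: for $\beta\ge H_{m,i}$ the layer is dry ($h_{m,\epsilon}=0$ by \eqref{eq:reconstruction_layerheight}--\eqref{eq:reconstruction_waterheight}), while for $\beta<H_{m,i}$ the telescoped pressure argument $p_{m,\epsilon}=H_{m,\epsilon}+\sum_{k<m}\sigma_{km}h_{k,\epsilon}$ is frozen at $H_{m,i}+\sum_{k<m}\sigma_{km}h_{k,i}$, using the ordering $H_{1,i}\ge\cdots\ge H_{M,i}\ge b_i$. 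The underlying mechanism---either zero height or constant pressure argument---is identical, but your organization buys three things: it makes explicit the monotonicity facts the paper only asserts informally (``all layers below $m$ must be dry'', ``layers $k<m$ are not affected''), it treats the kink points rigorously as a measure-zero set in an a.e.\ chain-rule argument rather than implicitly, and it subsumes the SWE as the case $M=1$ (with the extra observation that \eqref{eq:reconstruction_bottom} forces $\beta\le H_{1,i}$, recovering the paper's ``free surface preserved'' argument) instead of requiring a separate proof. The paper's subinterval decomposition, in turn, stays closer to the geometric picture of Figure~\ref{fig:reconstruction_visu} and exhibits directly that only one layer varies at a time, which is the form of the statement reused later in the entropy-stability proof for the ML-SWE.
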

\begin{proof}
	To show that the nonconservative volume terms vanish we split the volume integral from \eqref{eq:HR_scheme_with_vol_terms} into the subcell contributions and use that the integral at inner subcells $E^{\epsilon}_i$ vanishes as the approximation remains constant
	\begin{equation}
		\int\limits_{E_i} \statevecGreek{\phi}_{\epsilon}\circ \partial_x{\statevec{r}_{\epsilon}} \, \operatorname{dx} = 
		\int\limits_{\hat{E}^{\epsilon+}_{i-\nicefrac{1}{2}}} \statevecGreek{\phi_{\epsilon}}\circ 	\partial_x{\statevec{r}_{\epsilon}}\,\operatorname{dx} 
		+ \int\limits_{\hat{E}^{\epsilon-}_{i+\nicefrac{1}{2}}} \statevecGreek{\phi}_{\epsilon} \circ 	\partial_x{\statevec{r}_{\epsilon}}\,\operatorname{dx}.
	\end{equation}

	For the SWE our claim follows directly as the reconstruction preserves the total water height $H$ within each element.
	Hence, we obtain that $\statevec{r}_{\epsilon}(x) = H_{\epsilon}(x)$ remains constant in each subcell and the volume integral vanishes from $\partial_x{\statevec{r}_{\epsilon}}=0$.
	
	For the ML-SWE the situation is more complicated due to additional layer coupling terms.
	Although the total water height $H_1$ is preserved by the reconstruction, we now have that $\statevec{r}_{\epsilon}(x) \neq H_{\epsilon}(x)$, due to the additional quotient of layer densities and $r_{\epsilon}(x)$ is no longer constant.
	To examine the volume term we evaluate the integral at a single interface.
	By taking the maximum in \eqref{eq:reconstruction_layerheight}, the reconstructed layer heights $h_{m,\epsilon}(x)$ from \eqref{eq:reconstruction_waterheight} become non-smooth at the point of their respective wet/dry transition, see Figure~\ref{fig:reconstruction_visu}.
	To account for this we split the integral into further subintegrals $\epsilon_m$ between these transition points
	\begin{equation}
		\int\limits_{\hat{E}^{\epsilon-}_{i+\nicefrac{1}{2}}} \statevecGreek{\phi_{\epsilon}}\circ 	\partial_x{\statevec{r}_{\epsilon}}\,\operatorname{dx} 
		=
		\int\limits_{\epsilon_1} \statevecGreek{\phi}_{\epsilon} \circ 	\partial_x{\statevec{r}_{\epsilon}}\,\operatorname{dx}
		+...+
		\int\limits_{{\epsilon_M}} \statevecGreek{\phi}_{\epsilon} \circ 	\partial_x{\statevec{r}_{\epsilon}}\,\operatorname{dx}.
	\end{equation}

	From this splitting, we can examine each subintegral separately. That is, in each subintegral $\epsilon_m$, only layer $m$ is affected by the bottom reconstruction, and only $h_{m,\epsilon}$ varies on the respective interval $\epsilon_m$.
	Due to the linear nature of the bottom reconstruction all layers below $m$ must be dry, so $h_{k,\epsilon}=0$ for $k>m$ and from \eqref{eq:reconstruction_waterheight} it follows that $h_{m,\epsilon}+b_{\epsilon}$ is constant.
	On the other hand, layers $k<m$ are not affected by the reconstruction and $h_{k,\epsilon}$ is $\text{constant}$ for $k>m$. 
	Combining these properties this means that $\statevec{r}_{m,\epsilon}$ is constant for each subintegral $\epsilon_m$ such that
	\begin{equation}
		\int\limits_{{\epsilon_m}} \statevecGreek{\phi}_{m,\epsilon} \circ 	\partial_x{\statevec{r}_{m,\epsilon}}\,\operatorname{dx}
		= \int\limits_{{\epsilon_m}} g \begin{pmatrix} 0 \\ h_{m,\epsilon} \end{pmatrix}
		\circ 
		\partial_x\begin{pmatrix} 0 \\ b_{\epsilon} + \sum\limits_{k\geq m}h_{k,\epsilon} + \sum\limits_{k<m}\sigma_{km}h_{k,\epsilon}\end{pmatrix}
		= \statevec{0},
	\end{equation}
	and it follows that $\int\limits_{E_i} \statevecGreek{\phi}_{\epsilon}\circ \partial_x{\statevec{r}_{\epsilon}} \, \operatorname{dx}=0$.
	\qed
\end{proof}

\begin{figure}
	\subfigure[]{
		\begin{tikzpicture}[scale=0.55]
\path[draw=black,pattern=north west lines]
(0,0) -- (8,0) -- 
(8,0) -- (8,5) --
(8,5) -- (4,5) --
(4,5) -- (4,1) --
(4,1) -- (1,1) --
(1,1) -- (0,1) --
(0,1) -- (0,0);

\draw (0,4) -- (4,4);
\path[fill=cyan, opacity=0.5] (0,4) -- (4,4) -- (4,1) -- (0,1) -- (0,4);
\draw (0,3) -- (4,3);
\path[fill=blue, opacity=0.2] (0,3) -- (4,3) -- (4,2) -- (0,2) -- (0,3);
\draw (0,2) -- (4,2);
\path[fill=blue, opacity=0.4] (0,2) -- (4,2) -- (4,1) -- (0,1) -- (0,2);

\draw[<->, stealth-stealth, xshift=-0.2cm] (0,1) -- (0,1.95) node [pos=0.5, xshift=-0.3cm]{$h_{3}^-$};
\draw[<->, stealth-stealth, xshift=-0.2cm] (0,2) -- (0,2.95) node [pos=0.5, xshift=-0.3cm]{$h_{2}^-$};
\draw[<->, stealth-stealth, xshift=-0.2cm] (0,3) -- (0,3.95) node [pos=0.5, xshift=-0.3cm]{$h_{1}^-$};

\node (bminus) at (1,1) {\textbullet};
\node[] at (1,1) [above]{$b^-$};
\node (bplus) at (7,5) {\textbullet};
\node[] at (7,5) [above]{$b^+$};

\draw [
opacity=0
] (1,0) -- (1.95,0) 
node [pos=0.5,anchor=north,yshift=-0.15cm] {$\epsilon_1$};

\end{tikzpicture}
	}
	\hspace{0.15\textwidth}
	\subfigure[]{
		\begin{tikzpicture}[scale=0.55]

\path[draw=black,pattern=north west lines]
(0,0) -- (8,0) -- 
(8,0) -- (8,5) --
(8,5) -- (4,5) --
(4,5) -- (4,4) --
(4,4) -- (1,1) --
(1,1) -- (0,1) --
(0,1) -- (0,0);

\draw (0,4) -- (4,4);
\path[fill=cyan, opacity = 0.5] (0,4) -- (4,4) -- (1,1) -- (0,1) -- (0,4);
\draw (0,3) -- (3,3);
\path[fill=blue, opacity=0.2] (0,3) -- (3,3) -- (2,2) -- (0,2) -- (0,3);
\draw (0,2) -- (2,2);
\path[fill=blue, opacity=0.4] (0,2) -- (2,2) -- (1,1) -- (0,1) --(0,2);

\draw[<->, stealth-stealth, xshift=-0.2cm] (0,1) -- (0,1.95) node [pos=0.5, xshift=-0.3cm]{$h_{3}^-$};
\draw[<->, stealth-stealth, xshift=-0.2cm] (0,2) -- (0,2.95) node [pos=0.5, xshift=-0.3cm]{$h_{2}^-$};
\draw[<->, xshift=-0.2cm] (0,3) -- (0,3.95) node [pos=0.5, xshift=-0.3cm]{$h_{1}^-$};

\node (uminus) at (1,1) {\textbullet};
\node[] at (1,1) [above left]{$b^-$};
\node (urec-) at (4,4) {\textbullet};
\node[] at (4,4) [above left]{$b_{\epsilon}^-$};
\node (uplus) at (7,5) {\textbullet};
\node (urec+) at (4,5) {\textbullet};
\node[] at (4,5) [above right]{$b_{\epsilon}^+$};
\node[] at (7,5) [above right]{$b^+$};

\draw [
thick,
decoration={
	brace,
	mirror,
	raise=0.1cm
},
decorate
] (1,0) -- (1.95,0) 
node [pos=0.5,anchor=north,yshift=-0.15cm] {$\epsilon_1$};
\draw [
thick,
decoration={
	brace,
	mirror,
	raise=0.1cm
},
decorate
] (2.05,0) -- (2.95,0) 
node [pos=0.5,anchor=north,yshift=-0.15cm] {$\epsilon_2$};
\draw [
thick,
decoration={
	brace,
	mirror,
	raise=0.1cm
},
decorate
] (3.05,0) -- (3.95,0) 
node [pos=0.5,anchor=north,yshift=-0.15cm] {$\epsilon_3$};

\draw[dashed] (1,0) -- (1,4);
\draw[dashed] (2,0) -- (2,4);
\draw[dashed] (3,0) -- (3,4);
\draw[dashed] (4,0) -- (4,4);

\end{tikzpicture}
	}
	\caption{Illustration of bottom topography and layer heights before (a) and after the reconstruction procedure (b) at a wet/dry transition.}
	\label{fig:reconstruction_visu}
\end{figure}
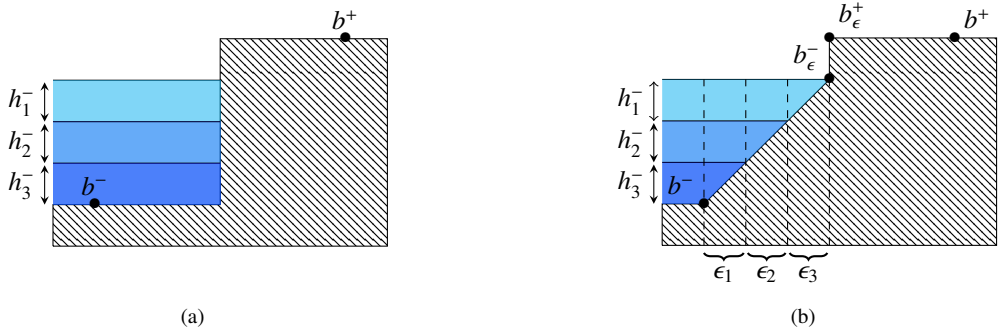

\begin{figure}[htb]
	\subfigure[wet]{
		\begin{tikzpicture}[scale=0.4]
	
\path[fill=cyan, opacity = 0.5] (0,6.5) -- (8,6.5) -- (8,5) -- (4,5) -- (1,1) -- (0,1) -- (0,6.5);

\path[draw=black,dashed,color=black, pattern = crosshatch dots, pattern color=red] (1,1) -- (4,5) -- (4,1) -- (1,1);
	
\path[draw=black,pattern=north west lines]
(0,0) -- (8,0) -- 
(8,0) -- (8,5) --
(8,5) -- (4,5) --
(4,5) -- (4,1) --
(4,1) -- (1,1) --
(1,1) -- (0,1) --
(0,1) -- (0,0);

\draw (0,6.5) -- (8,6.5);

\draw[dashed] (1,1) -- (4,5);

\node (uminus) at (1,1) {\textbullet};
\node[] at (1,1) [above]{$b^-$};
\node (urec-) at (4,5) {\textbullet};
\node[] at (4,5) [above]{$b_{\epsilon}^- = b_{\epsilon}^+$};

\node (uplus) at (7,5) {\textbullet};
\node[] at (7,5) [above]{$b^+$};

\end{tikzpicture}}
	\hfill
	\subfigure[dry]{
		\begin{tikzpicture}[scale=0.4]

\path[fill=cyan, opacity = 0.5] (0,4) -- (4,4) -- (1,1) -- (0,1) -- (0,4);

\path[draw=black,dashed,color=black, pattern = crosshatch dots, pattern color=red] (1,1) -- (4,4) -- (4,1) -- (1,1);
	
\path[draw=black,pattern=north west lines]
(0,0) -- (8,0) -- 
(8,0) -- (8,5) --
(8,5) -- (4,5) --
(4,5) -- (4,1) --
(4,1) -- (1,1) --
(1,1) -- (0,1) --
(0,1) -- (0,0);

\draw (0,4) -- (4,4);

\draw[dashed] (1,1) -- (4,4);

\node (uminus) at (1,1) {\textbullet};
\node[] at (1,1) [above left]{$b^-$};
\node (urec-) at (4,4) {\textbullet};
\node[] at (4,4) [above left]{$b_{\epsilon}^-$};
\node (uplus) at (7,5) {\textbullet};
\node (urec+) at (4,5) {\textbullet};
\node[] at (4,5) [above right]{$b_{\epsilon}^+$};
\node[] at (7,5) [above]{$b^+$};

\end{tikzpicture}}
	\hfill
	\subfigure[partially dry]{
		\begin{tikzpicture}[scale=0.4]

\path[fill=cyan, opacity = 0.5] (0,4) -- (4,4) -- (1,1) -- (0,1) -- (0,4);

\path[draw=black,dashed,color=black, pattern = crosshatch dots, pattern color=red] (1,1) -- (4,4) -- (4,1) -- (1,1);

\path[fill=cyan, opacity = 0.5] (4,5) -- (4,6.5) -- (8,6.5) -- (8,5) -- (4,5);

\path[draw=black,pattern=north west lines]
(0,0) -- (8,0) -- 
(8,0) -- (8,5) --
(8,5) -- (4,5) --
(4,5) -- (4,1) --
(4,1) -- (1,1) --
(1,1) -- (0,1) --
(0,1) -- (0,0);

\draw (0,4) -- (4,4);
\draw (4,4) -- (4,6.5);
\draw (4,6.5) -- (8,6.5);

\draw[dashed] (1,1) -- (4,4);

\node (uminus) at (1,1) {\textbullet};
\node[] at (1,1) [above]{$b^-$};
\node (urec-) at (4,4) {\textbullet};
\node[] at (4,4) [above left]{$b_{\epsilon}^-$};
\node (uplus) at (7,5) {\textbullet};
\node (urec+) at (4,5) {\textbullet};
\node[] at (4,5) [above right]{$b_{\epsilon}^+$};
\node[] at (7,5) [above]{$b^+$};

\end{tikzpicture}}
	\caption{Illustration of the bottom topography reconstruction for the different configurations (a) wet, (b) dry and (c) partially dry}
	\label{fig:layer_configurations}
\end{figure}
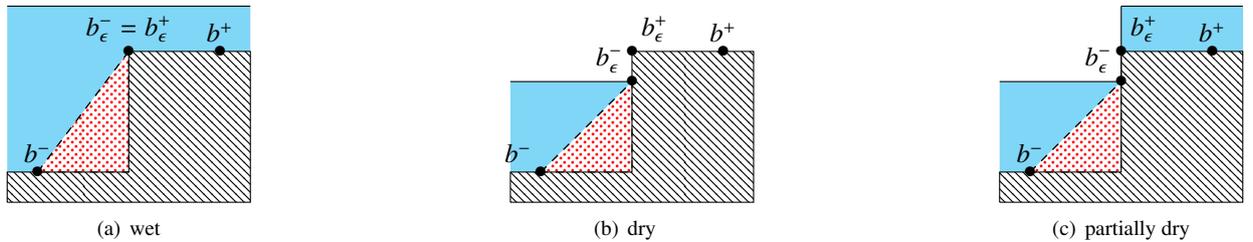

With the result of Lemma \ref{lemma:vanishing_hr_terms} we can omit the volume integral in \eqref{eq:HR_scheme_with_vol_terms} and only consider the simplified approximation~\eqref{eq:hydrostatic_reconstruction_scheme} in the forthcoming analysis.

Next, we demonstrate well-balancedness and entropy stability for the new hydrostatic reconstruction.
Both properties are first demonstrated on the SWE before we generalize to the more involved case of the ML-SWE.\\

\subsection{Well-balancedness}\label{sec:well-balancedness}
From the lake-at-rest conditions \eqref{eq:lake-at-rest_swe} and \eqref{eq:lake-at-rest_multilayer} it is clear that well-balancedness may be achieved either through a balance in the hydrostatic pressure or if the layer height vanishes.
While the first strategy is widely used to achieve well-balanced schemes, to the best of our knowledge, the second strategy has not been considered in previous hydrostatic reconstructions.
As such, we leverage this additional flexibility and choose the reconstruction to either satisfy $h_{m,\epsilon}=0$ or $\statevec{r}_{m,\epsilon}=constant$, which is facilitated by the nonconservative pressure formulation in \eqref{eq:system_multilayer}.
For the SWE this allows us to design a reconstruction that preserves the total water height, which will be a key to entropy conservation in Section \ref{sec:entropy_stability}
\begin{lemma}[Well-balancedness]\label{lemma:wb}
	The hydrostatic reconstruction scheme \eqref{eq:hydrostatic_reconstruction_scheme} applied to the SWE \eqref{eq:system_swe} preserves the lake-at-rest initial condition \eqref{eq:lake-at-rest_swe}.
\end{lemma}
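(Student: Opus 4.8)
The plan is to show that the entire right-hand side of \eqref{eq:hydrostatic_reconstruction_scheme} vanishes when the discrete data satisfy the lake-at-rest condition \eqref{eq:lake-at-rest_swe}, i.e.\ $v \equiv 0$ together with a flat free surface $H = h + b$ equal to a constant level $H_0$ on every wet cell and $H = b$ on every dry cell. Since the reduced discretization only contains the surface flux and the surface nonconservative term, it suffices to argue that both $\statevec{f}^*_\epsilon$ and $(\statevecGreek{\phi}\circ\statevec{r})^\diamond_\epsilon$ are zero at each interface. First I would dispose of the conservative flux: the reconstruction \eqref{eq:reconstruction_conservative_variables} inherits the nodal velocity $(v_m)_i = 0$, so every reconstructed state has vanishing momentum $h_{\epsilon} v = 0$. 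Because $\statevec{f}$ collects only advective contributions, the pressure having been moved into the nonconservative term, each component of the two-point flux carries a velocity factor and therefore $\statevec{f}^*_\epsilon = \statevec{0}$.

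It then remains to treat the nonconservative term. For the SWE, inserting $\statevecGreek{\phi} = g(0, h)^T$ and $\statevec{r} = (0, H)^T$ into \eqref{eq:nonconservative_flux} gives the reconstructed interface value $(\statevecGreek{\phi}\circ\statevec{r})^\diamond_\epsilon = \tfrac{g}{2}(0,\, h_\epsilon^- \jump{H_\epsilon})^T$. The heart of the argument is the observation, highlighted before the lemma, that this product vanishes whenever \emph{either} $\jump{H_\epsilon} = 0$ \emph{or} $h_\epsilon^- = 0$, and I would show that the reconstruction \eqref{eq:reconstruction_bottom}--\eqref{eq:reconstruction_waterheight} always forces at least one of these two alternatives at every interface.

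The remaining work is a short case distinction on the wet/dry status of the two adjacent cells, using $H_1^\pm = H^\pm$ for the SWE. In the fully wet case both traces share the same level $H^- = H^+ = H_0 \ge \max(b^-,b^+)$; then \eqref{eq:reconstruction_bottom} gives $b_\epsilon^- = b_\epsilon^+ = \max(b^-,b^+)$ and \eqref{eq:reconstruction_layerheight} gives $H_\epsilon^- = H_\epsilon^+ = H_0$, so that $\jump{H_\epsilon} = 0$. At a wet/dry front the dry-side bottom exceeds the still-water level $H_0$ of the wet neighbour, hence $\max(b^-,b^+)$ equals the dry-side bottom; evaluating \eqref{eq:reconstruction_bottom} on the wet side then caps the reconstructed bottom exactly at the water level, $b_\epsilon = \min(H_0, \max(b^-,b^+)) = H_0 = H_\epsilon$, and the reconstructed depth $h_\epsilon = H_\epsilon - b_\epsilon$ collapses to zero there, while on the dry side $h_\epsilon = 0$ holds trivially. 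In either orientation the minus-side trace entering \eqref{eq:nonconservative_flux} carries $h_\epsilon^- = 0$, so the nonconservative term vanishes. Collecting the cases, both surface contributions vanish at every interface, whence $\statevec{u}_t = \statevec{0}$ and the discrete lake-at-rest is preserved.

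I expect the main obstacle to be the wet/dry case, where one must verify that the nested $\min/\max$ in \eqref{eq:reconstruction_bottom}--\eqref{eq:reconstruction_layerheight} conspire to raise the reconstructed bottom exactly to the still-water level and thereby annihilate $h_\epsilon^-$ for both interface orientations; the fully wet case is by contrast a direct substitution. A minor point to confirm is that the argument that $\statevec{f}^*_\epsilon$ vanishes holds for the concrete entropy conservative flux used later, i.e.\ that each of its components is genuinely proportional to a velocity and not merely consistent.
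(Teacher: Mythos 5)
Your proof is correct and takes essentially the same route as the paper's: the numerical flux vanishes because $v=0$, and the nonconservative term vanishes by the same wet/dry dichotomy — $\jump{\statevec{r}_{\epsilon}} = (0,\jump{H_\epsilon})^T = 0$ at fully wet interfaces, and $h_\epsilon^\pm = 0$ at wet/dry fronts because the reconstruction caps the bottom at the still-water level. Your explicit unwinding of \eqref{eq:reconstruction_bottom}--\eqref{eq:reconstruction_waterheight} and the velocity-factor argument for the EC flux are simply more detailed versions of what the paper asserts tersely (the paper invokes consistency, which suffices since the reconstructed states coincide under lake-at-rest).
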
	
\begin{proof}
	For well-balancedness we need to ensure that both conservative and nonconservative interface contributions vanish under lake-at-rest condition
	\begin{equation}
		\oint_{\partial E} ({\statevec{f}}_{\epsilon}^* + ({\statevecGreek{\phi}}\circ 	{\statevec{r}})^{\diamond}_{\epsilon})\cdot \vec{n} \operatorname{dS} = 0.
	\end{equation} 
	Assuming that the numerical flux is consistent with the physical flux, $\statevec{f}^*_{\epsilon}=0$ follows directly from the lake-at-rest initial condition as $v=0$.
	For the nonconservative term we have to consider separately wet and dry interfaces as shown in Figure~\ref{fig:layer_configurations}a and \ref{fig:layer_configurations}b, respectively. 
	At fully wet interfaces the lake-at-rest condition \eqref{eq:lake-at-rest_swe} specifies a constant water height and the nonconservative term vanishes since $\statevec{r}_{\epsilon}^{\pm} = (0, H^\pm_{\epsilon})^T$ is constant, whereas at dry interfaces the reconstruction satisfies $\statevecGreek{\phi}^{\pm}_{\epsilon} = (h_{\epsilon}^{\pm}, h_{\epsilon}^{\pm})^T = 0$ and the scheme is well-balanced.
	\qed 
\end{proof}

Now that well-balancedness for the SWE is established, we investigate the ML-SWE, where well-balancedness becomes a more subtle issue.
The standard hydrostatic reconstruction procedures for the SWE rely on the fact that bottom topography and water height are interchangeable with regards to the hydrostatic pressure, see e.g. \cite{audusse2004fast}.
However, this property does not carry over to the ML-SWE due to the additional density quotient $\sigma$ in the nonconservative term.
As a consequence, it seems that a single bottom reconstruction cannot satisfy the pressure balance in different layers simultaneously. 
This fact has lead many authors to design independent bottom or interface reconstructions to achieve well-balancedness, see e.g. \cite{martinez20201d, dudzinski2013well, castro2005numerical}. While these approaches successfully obtain well-balancedness, a physical interpretation is difficult and the implications, specifically on entropy conservation, were not investigated.
Using the new hydrostatic reconstruction we will demonstrate that well-balancedness and entropy stability for the ML-SWE can be achieved with a single bottom topography. It is, again, key that using the nonconservative pressure formulation either $h_m=0$ or $\statevec{r}_{m}=constant$ can lead to well-balancedness.
\begin{lemma}[Well-balancedness ML-SWE]\label{lemma:wb_ml}
	The hydrostatic reconstruction scheme \eqref{eq:hydrostatic_reconstruction_scheme} applied to the ML-SWE \eqref{eq:system_multilayer} preserves the lake-at-rest initial condition \eqref{eq:lake-at-rest_multilayer}.
\end{lemma}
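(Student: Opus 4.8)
The plan is to mirror the structure of the SWE argument in Lemma~\ref{lemma:wb}, but now to track each layer separately, since the density ratios $\sigma_{km}$ prevent the single scalar cancellation used there. As before, $v_m = 0$ under the lake-at-rest condition forces the numerical flux $\statevec{f}^*_\epsilon$ to vanish by consistency, so only the interface nonconservative term survives in \eqref{eq:hydrostatic_reconstruction_scheme}, and it suffices to show that
\[
\oint_{\partial E}(\statevecGreek{\phi}\circ\statevec{r})^{\diamond}_{\epsilon} \cdot \vec{n}\dS = 0.
\]
Inserting the specific term \eqref{eq:nonconservative_flux}, the momentum contribution at a given interface splits layerwise into factors of the form $\tfrac{g}{2}\,h_{m,\epsilon}^{-}\jump{r_{m,\epsilon}}$. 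Hence the goal reduces to proving that, at each interface and for every layer $m$, either $h_{m,\epsilon}=0$ or $\jump{r_{m,\epsilon}}=0$ — precisely the two-way flexibility of the nonconservative pressure formulation emphasized before the statement.

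First I would rewrite the reconstructed pressure potential by telescoping the reconstructed water heights \eqref{eq:reconstruction_waterheight}, which yields the convenient identity $r_{m,\epsilon} = H_{m,\epsilon} + \sum_{k<m}\sigma_{km}\,h_{k,\epsilon}$, because $b_\epsilon + \sum_{k\geq m} h_{k,\epsilon} = H_{m,\epsilon}$. This isolates the only quantities whose continuity must be controlled: the reconstructed total height $H_{m,\epsilon}$ of layer $m$ together with the reconstructed heights $h_{k,\epsilon}$ of the layers strictly above it, $k<m$.

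Next I would set $b^{*}\coloneqq\max(b^{-},b^{+})$ and read off from \eqref{eq:reconstruction_bottom} and \eqref{eq:reconstruction_layerheight} that on the side carrying the higher bottom the reconstructed boundary bottom equals the true bottom, leaving the heights unchanged, while on the lower side it is raised to a value in $[\,b^{-},b^{*}\,]$. I would then classify each layer by the position of its flat rest level relative to $b^{*}$. Layers lying entirely above $b^{*}$ keep $H_{m,\epsilon}=\max(H_m, b_\epsilon)$ equal on both sides, since their flat interface levels are continuous and exceed $b_\epsilon$; together with the analogous continuity of the upper layers $h_{k,\epsilon}$, $k<m$, this gives $\jump{r_{m,\epsilon}}=0$. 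Layers lying below $b^{*}$ are squeezed to zero thickness on both sides — physically dry on the high-bottom side, and dried by the raised reconstructed bottom on the low-bottom side — so that $h_{m,\epsilon}^{\pm}=0$ and the factor vanishes. As these two classes exhaust all layers, every layerwise factor is zero, both adjacent elements receive a vanishing update, and the scheme therefore preserves \eqref{eq:lake-at-rest_multilayer}.

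The main obstacle is the partially dry configuration of Figure~\ref{fig:layer_configurations}c, where wet upper layers and dry lower layers meet at the same interface. Here I must verify that the \emph{single} reconstructed bottom $b_\epsilon = \min(H_1, b^{*})$ cuts the column at exactly the right elevation, so that the submerged lower layers are zeroed while the interface levels of the layers that remain wet are untouched and hence continuous. Carefully accommodating the degenerate sub-cases — a top layer whose rest level lies between $b^{-}$ and $b^{+}$, or an entirely dry column — within this single classification is the delicate bookkeeping that the proof must get right, and it is exactly where the choice \eqref{eq:reconstruction_bottom} proves essential.
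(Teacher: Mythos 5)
Your overall mechanism is the same as the paper's: under lake-at-rest, $v_m=0$ annihilates the consistent numerical flux, and the remaining interface nonconservative term vanishes layerwise because each layer satisfies either $h_{m,\epsilon}^{\pm}=0$ or $\jump{r_{m,\epsilon}}=0$. The paper reaches this by a direct per-layer wet/dry dichotomy: if layer $m$ is dry, the reconstruction together with \eqref{eq:lake-at-rest_multilayer} forces $h_{m,\epsilon}^{\pm}=0$, hence $\statevecGreek{\phi}_m^{\pm}=0$; if layer $m$ is wet, its rest level $H_m$ is flat and the layers above it are untouched by the reconstruction, so $\statevec{r}_{m,\epsilon}$ is continuous across the interface.

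However, your execution has a genuine gap. Your geometric classification, layers ``entirely above $b^*$'' versus layers ``below $b^*$'', is claimed to exhaust all layers, but it does not: a layer can straddle $b^*=\max(b^-,b^+)$, i.e., $H_{m+1} < b^* < H_m$. This is precisely the partially dry configuration of Figure~\ref{fig:layer_configurations}c --- a wet layer sitting directly on top of a layer that is dry on the high-bottom side --- and it is exactly the case the novel reconstruction is designed to handle. You acknowledge this in your final paragraph but explicitly leave it unresolved (``I must verify\dots'', ``the delicate bookkeeping that the proof must get right''); a proof that defers the essential case is incomplete. The fix is short: for such a straddling wet layer $m$ one has $H_m^{\pm} > b^* \geq b_\epsilon^{\pm}$ by \eqref{eq:reconstruction_bottom}, so $H_{m,\epsilon}^{\pm} = \max\left(H_m^{\pm}, b_\epsilon^{\pm}\right) = H_m^{\pm}$ is untouched by \eqref{eq:reconstruction_layerheight} and is flat under \eqref{eq:lake-at-rest_multilayer}; moreover every layer $k<m$ lies entirely above $H_m > b_\epsilon^{\pm}$, hence $h_{k,\epsilon}^{\pm} = h_k^{\pm}$ with continuous values, and therefore $\jump{r_{m,\epsilon}} = \jump{H_{m,\epsilon}} + \sum_{k<m}\sigma_{km}\jump{h_{k,\epsilon}} = 0$, exactly as in your ``entirely above'' class. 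Adding this third class --- or, as the paper does, classifying each layer by wet/dry status rather than by position relative to $b^*$ --- closes the argument.
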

\begin{proof}
	Analogous to the SWE, for a consistent numerical flux we obtain $\statevec{f}^*_{\epsilon}=0$ from the lake-at-rest condition \eqref{eq:lake-at-rest_multilayer} as $v=0$.
	However, the nonconservative contribution now introduces additional difficulties due to the density quotients $\sigma$. To show well-balancedness we follow a similar reasoning as in Lemma 1 and consider two possible configurations for each layer, as depicted in Figure~\ref{fig:layer_configurations}.
	
	If a layer $m$ is dry, the reconstruction procedure together with the lake-at-rest condition \eqref{eq:lake-at-rest_multilayer} ensures that $h_{m,\epsilon}^{\pm}=0$ and hence $\statevecGreek{\phi}^{\pm}_m=0$.
	On the other hand, if a layer $m$ is wet, the total layer height $H^{\pm}_{m} = \sum_{k\geq m} h^{\pm}_{m,\epsilon}+b^{\pm}_{\epsilon}$ is constant according to the lake-at-rest condition \eqref{eq:lake-at-rest_multilayer}. Layers above are not affected by the reconstruction such that $h_{k,\epsilon}$ remain constant for $k<m$.	
	So, in each layer we obtain either $\statevecGreek{\phi}^{\pm}_m=0$ or $\statevec{r}_{m,\epsilon}=constant$ such that the nonconservative term vanishes and the steady state is preserved.\qed 
\end{proof}
\begin{remark}
	The well-balancedness is achieved solely by the reconstruction procedure and is satisfied for any consistent numerical and nonconservative flux.
\end{remark} 
\subsection{Entropy stability}\label{sec:entropy_stability}
We next demonstrate semi-discrete entropy stability of the new hydrostatic reconstruction scheme \eqref{eq:hydrostatic_reconstruction_scheme}. 
First, we examine the entropy properties and derive an entropy inequality in the continuous case.
From this we proceed to mimic the analysis in the semi-discrete case. Therefore, we first create an entropy conservative scheme, that is then complemented with suitable dissipation to obtain an entropy stable scheme.\\

The SWE are endowed with a natural convex entropy, entropy flux pair related to the total energy \cite{fjordholm2012energy}
\begin{equation}
	\begin{aligned}
		S(\statevec{u}) &= \frac{1}{2}hv^2 + \frac{1}{2}gh^2 + ghb,\\
		{f}^S(\statevec{u}) &= \left(\frac{hv^2}{2} + gh^2\right)v + ghbv,
	\end{aligned}
\end{equation}
which generalizes to the total energy within the layers for the ML-SWE \cite{bouchut2010robust}
\begin{equation}
	\begin{aligned}
		S(\statevec{u}) &= 
		\sum\limits_{m=1}^M \rho_m\left(\frac{h_mv_m^2}{2} 
		+ \frac{gh_m^2}{2}
		+ gh_mb 
		+ g\sum_{k<m}\sigma_{km}h_kh_m
		\right),
		\\
		{f}^S(\statevec{u}) &= 
		\sum\limits_{m=1}^M \rho_mv_m\left(\frac{h_mv_m^2}{2} 
		+ gh_mb
		+ g\sum_{k\geq m} h_kh_m
		+ g\sum_{k<m}\sigma_{km}h_kh_m
		\right).
	\end{aligned}
\end{equation}

Differentiation of the entropy function $S(\statevec{u})$ with respect to the conservative variables yields the entropy variables $\statevec{w} = (\statevec{w}^T_1,...,\statevec{w}^T_M) = \partial_{\statevec{u}}S(\statevec{u})$

\begin{equation}
	\statevec{w}_m =
	\begin{pmatrix}
		\rho_mg\left(b + \sum\limits_{k\geq m}h_k + \sum\limits_{k<m} \sigma_{km}h_k\right) - \frac{1}{2}\rho_mv_m^2\\
		\rho_mv_m
	\end{pmatrix}, \quad m=1,...,M.
\end{equation}
We contract \eqref{eq:balance_law_continuous} with the entropy variables to obtain the scalar entropy conservation law
\begin{equation}
	\statevec{w}^T\statevec{u}_t + \statevec{w}^T\left(\partial_x\statevec{f} + \statevecGreek{\phi}\circ\left(\partial_x\statevec{r}\,\right)\right) 
	= S_t + \partial_x{f}^{\,S}
	= 0,
	\label{eq:entropy_conservation_law}
\end{equation}
and integrate over the domain $\Omega$ and apply Gauss' Law to obtain entropy conservation in integral form
\begin{equation}
	\int\limits_{\Omega}S_t \operatorname{dV} + \int\limits_{\partial \Omega}{f}^{\,S}\cdot\spacevec{n} \dS = 0,
	\label{eq:entropy_conservation_law_integral_form}
\end{equation}
which states that for smooth solutions the total entropy is conserved. However, in the presence of discontinuities, entropy must be dissipated which leads to the entropy inequality
\begin{equation}
	\int\limits_{\Omega}S_t \operatorname{dV} + \int\limits_{\partial \Omega}{f}^{\,S} \cdot \spacevec{n} \dS \leq 0.
	\label{eq:entropy_inequality}
\end{equation}
Using a nonconservative formulation of the pressure term in \eqref{eq:system_swe} and \eqref{eq:system_multilayer}, the advective flux contracts to the entropy flux, and simplifies some proofs in the later part of this section.
\begin{equation}
	\statevec{w}^T\statevec{f} = {f}^{\,S}.
	\label{eq:entropy_flux_potential}
\end{equation}

To demonstrate entropy stability of the method, we mimic the continuous entropy analysis and demonstrate that we recover the integral entropy conservation law \eqref{eq:entropy_conservation_law_integral_form} and entropy inequality \eqref{eq:entropy_inequality} in the semi-discrete setting.\\

Therefore, we consider a single interface between neighboring FV cells and apply the hydrostatic reconstruction scheme \eqref{eq:hydrostatic_reconstruction_scheme} with nonconservative term \eqref{eq:nonconservative_flux} to obtain
\begin{equation}
	\Delta x \frac{\partial}{\partial 	t}\statevec{u}^- 
	=
	\statevec{f}^- - \statevec{f}^*_{\epsilon} - \frac{\statevecGreek{\phi}^-_{\epsilon}}{2}\circ\jump{\statevec{r}_{\epsilon}},
	\qquad
	\Delta x \frac{\partial}{\partial 	t}\statevec{u}^+ 
	= 
	\statevec{f}^*_{\epsilon}  - \statevec{f}^+ -  \frac{\statevecGreek{\phi}^+_{\epsilon}}{2}\circ\jump{\statevec{r}_{\epsilon}}.
\end{equation}

We contract with entropy variables and assume time continuity to recover the rate of entropy change in each cell
\begin{equation}
	\Delta x \frac{\partial}{\partial 	t}S^- 
	=
	(\statevec{w}^-)^T\left(
	\statevec{f}^- - \statevec{f}^*_{\epsilon} - \frac{\statevecGreek{\phi}^-_{\epsilon}}{2}\circ\jump{\statevec{r}_{\epsilon}}\right),
	\qquad
	\Delta x \frac{\partial}{\partial 	t}S^+ 
	= 
	(\statevec{w}^+)^T\left(\statevec{f}^*_{\epsilon}  - \statevec{f}^+ -  \frac{\statevecGreek{\phi}^+_{\epsilon}}{2}\circ\jump{\statevec{r}_{\epsilon}}\right).
\end{equation}
Summing over both cells and gathering terms we find the total entropy change
\begin{equation}
	\Delta x \frac{\partial}{\partial t}\left(S^-+S^+\right) 
	=
	\jump{\statevec{w}}^T\statevec{f}_{\epsilon}^* - \jump{\statevec{w}^T\statevec{f}} - \avg{\statevec{w}\circ\statevecGreek{\phi_{\epsilon}}}^T\jump{\statevec{r_{\epsilon}}}.
	\label{eq:fv_total_discrete_entropy}
\end{equation}
To recover a discrete version of the integral entropy conservation law \eqref{eq:entropy_conservation_law_integral_form} we require that

\begin{equation}
	\jump{\statevec{w}}^T\statevec{f}_{\epsilon}^* - \jump{\statevec{w}^T\statevec{f}} - \avg{\statevec{w}\circ\statevecGreek{\phi_{\epsilon}}}^T\jump{\statevec{r_{\epsilon}}}
	\overset{!}{=}
	-\jump{\spacevec{f^S}}.
\end{equation}
Substituting \eqref{eq:entropy_flux_potential} yields the following entropy conservation condition for the numerical flux $\statevec{f}^*_{\epsilon}$
\begin{equation}
	\jump{\statevec{w}}^T\statevec{f}^{EC}_{\epsilon} - \avg{\statevec{w}\circ \statevecGreek{\phi}_{\epsilon}}^T\jump{\statevec{r}_{\epsilon}} = 0.
	\label{eq:entropy_conservation_condition}
\end{equation}
where we introduce the notation $\statevec{f}^{EC}_{\epsilon}$ to denote an entropy conservative (EC) numerical flux, satisfying condition \eqref{eq:entropy_conservation_condition}. We demonstrate in the following that \eqref{eq:entropy_conservation_condition} is satisfied by the EC flux
\begin{equation}
	\statevec{f}_{m,\epsilon}^{EC} \coloneqq \begin{pmatrix}
		\avg{h_{m,\epsilon}{v_m}} \\
		\avg{h_{m,\epsilon}v_m}\avg{v_m}
	\end{pmatrix}.
	\label{eq:ec_flux}
\end{equation}

\begin{lemma}[Entropy conservation (SWE)]
	The hydrostatic reconstruction scheme \eqref{eq:hydrostatic_reconstruction_scheme} for the SWE \eqref{eq:system_swe} with the EC flux \eqref{eq:ec_flux} and nonconservative term \eqref{eq:nonconservative_flux} is entropy conservative.
	\label{lemma:ec_swe}
\end{lemma}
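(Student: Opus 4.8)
The plan is to verify the single algebraic identity \eqref{eq:entropy_conservation_condition}; by the derivation preceding the lemma, establishing
\[
\jump{\statevec{w}}^T\statevec{f}^{EC}_{\epsilon} - \avg{\statevec{w}\circ\statevecGreek{\phi}_{\epsilon}}^T\jump{\statevec{r}_{\epsilon}} = 0
\]
at an arbitrary interface is equivalent to recovering the discrete entropy conservation law. For the SWE ($M=1$) the relevant ingredients are the cell entropy variables $\statevec{w} = (gH - \tfrac12 v^2,\, v)^T$ with $H \coloneqq h+b$, the reconstructed quantities $\statevecGreek{\phi}_{\epsilon} = g(0,\,h_{\epsilon})^T$ and $\statevec{r}_{\epsilon} = (0,\,H_{\epsilon})^T$ with $H_{\epsilon} \coloneqq h_{\epsilon}+b_{\epsilon}$, the EC flux \eqref{eq:ec_flux}, and the fact that the reconstruction \eqref{eq:reconstruction_conservative_variables} leaves the velocity $v$ unchanged so that $\statevec{w}$ and $\statevec{f}^{EC}_{\epsilon}$ share the same traces $v^{\pm}$.

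First I would expand the flux contribution. Substituting $\statevec{f}^{EC}_{\epsilon} = (\avg{h_{\epsilon}v},\, \avg{h_{\epsilon}v}\avg{v})^T$ gives
\[
\jump{\statevec{w}}^T\statevec{f}^{EC}_{\epsilon} = \big(g\jump{H} - \tfrac12\jump{v^2}\big)\avg{h_{\epsilon}v} + \jump{v}\avg{v}\,\avg{h_{\epsilon}v}.
\]
Invoking the elementary jump/average identity $\jump{v}\avg{v} = \tfrac12\jump{v^2}$, the two kinetic-energy contributions cancel and this collapses to $g\jump{H}\avg{h_{\epsilon}v}$. I expect this standard kinetic-energy cancellation to be routine.

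Next I would treat the nonconservative term. Since the Hadamard product gives $\statevec{w}\circ\statevecGreek{\phi}_{\epsilon} = (0,\, gh_{\epsilon}v)^T$, it follows that $\avg{\statevec{w}\circ\statevecGreek{\phi}_{\epsilon}}^T\jump{\statevec{r}_{\epsilon}} = g\avg{h_{\epsilon}v}\jump{H_{\epsilon}}$. Subtracting, the entropy conservation condition reduces to
\[
g\,\avg{h_{\epsilon}v}\big(\jump{H} - \jump{H_{\epsilon}}\big) = 0,
\]
so everything hinges on showing $\jump{H_{\epsilon}} = \jump{H}$, i.e. that the reconstruction preserves the cell free surface across the interface.

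The crux, and the only genuinely scheme-specific step, is this last identity. I would argue it directly from the definitions: because \eqref{eq:reconstruction_bottom} enforces $b_{\epsilon}^{\pm} = \min\!\big(H_1^{\pm}, \max(b^-,b^+)\big) \le H_1^{\pm}$, the layer-height reconstruction \eqref{eq:reconstruction_layerheight} yields $H_{\epsilon}^{\pm} = \max(H_1^{\pm}, b_{\epsilon}^{\pm}) = H_1^{\pm} = H^{\pm}$, uniformly across the wet, dry, and partially dry configurations of Figure~\ref{fig:layer_configurations}. Hence $\jump{H_{\epsilon}} = \jump{H}$, the residual term vanishes, and entropy conservation follows. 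The main obstacle is precisely recognizing that the $\min$/$\max$ structure of the reconstruction is engineered so that the reconstructed free surface matches the true one at the interface, which is exactly what lets the flux and nonconservative contributions cancel; as a consistency check, in fully dry configurations $h_{\epsilon}^{\pm}=0$ forces $\avg{h_{\epsilon}v}=0$ and both terms vanish independently.
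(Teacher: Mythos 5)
Your proposal is correct and follows essentially the same route as the paper's proof: expand the entropy conservation condition, cancel the kinetic-energy terms via $\jump{v}\avg{v}=\tfrac12\jump{v^2}$, and reduce everything to the fact that the reconstruction preserves the total water height at the interface. The only difference is that you explicitly verify $H_{\epsilon}^{\pm}=\max\left(H^{\pm},b_{\epsilon}^{\pm}\right)=H^{\pm}$ from the $\min$/$\max$ structure of the reconstruction, a fact the paper simply asserts, which is a welcome (if minor) addition of rigor.
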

\begin{proof}
	For entropy conservation we need to show that
	\begin{equation}
		\begin{aligned}
			\jump{\statevec{w}}^T\statevec{f}^{EC}_{\epsilon} - \avg{\statevec{w}\circ \statevecGreek{\phi}_{\epsilon}}^T\jump{\statevec{r}_{\epsilon}}
			=
			\left(g\jump{h + b} - \frac{\jump{v^2}}{2}\right) \avg{h_{\epsilon}v}
			+ \jump{v}\avg{h_{\epsilon}v}\avg{v}
			- g\jump{h_{\epsilon} + b_{\epsilon}}\avg{h_{\epsilon}v} = 0.
		\end{aligned}
	\end{equation}
	We first use the relation $\frac{\jump{v^2}}{2} = \jump{v}\avg{v}$ to cancel some terms and obtain
	\begin{equation}
		\begin{aligned}
			\jump{\statevec{w}}^T\statevec{f}^{EC}_{\epsilon} - 	\avg{\statevec{w}\circ \statevecGreek{\phi}_{\epsilon}}^T\jump{\statevec{r}_{\epsilon}}
			=
			g\jump{h + b} \avg{h_{\epsilon}v}
			- g\jump{h_{\epsilon} + b_{\epsilon}}\avg{h_{\epsilon}v}
			=
			0,
		\end{aligned}
	\end{equation}
	where entropy conservation follows directly as the reconstruction preserves the total water height, i.e., $h+b = h_{\epsilon}+b_{\epsilon}$. \qed
\end{proof}

Next, we consider the ML-SWE.
In this case, due to differing layer densities, the bottom reconstruction causes entropy violations, even for the EC flux.
While EC cannot be achieved, we are able to quantify the amount of entropy violation, which provides the necessary information to design entropy stable fluxes.

\begin{lemma}[Entropy conservation (ML-SWE)]
	The hydrostatic reconstruction scheme \eqref{eq:hydrostatic_reconstruction_scheme} for the ML-SWE with the EC flux \eqref{eq:ec_flux} and nonconservative term \eqref{eq:nonconservative_flux} generates the entropy contribution 
	\begin{equation}
		\jump{\statevec{w}}^T\statevec{f}^{EC}_{\epsilon} - \avg{\statevec{w}\circ \statevecGreek{\phi}_{\epsilon}}^T\jump{\statevec{r}_{\epsilon}}
		=
		g\sum\limits_{m=1}^M \rho_m 
		\sum_{k<m}(\sigma_{km}-1)\jump{h_k - h_{k,\epsilon}}\avg{h_{m,\epsilon}v_m}
	\end{equation}
	and therefore violates entropy conservation.
	\label{lemma:entropy_contribution_MLSWE}
\end{lemma}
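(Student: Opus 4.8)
The plan is to evaluate the left-hand side of the entropy conservation condition directly, mirroring the structure of the SWE proof in Lemma~\ref{lemma:ec_swe} but now carrying along the density quotients and the layer-coupling sums. First I would write out $\jump{\statevec{w}}^T\statevec{f}^{EC}_{\epsilon}$ layer by layer. Using the explicit entropy variables $\statevec{w}_m$ and the EC flux \eqref{eq:ec_flux}, the momentum-component of $\jump{\statevec{w}_m}$ produces a $\jump{v_m}\avg{h_{m,\epsilon}v_m}\avg{v_m}$ term, while the height-component produces $\rho_m g \jump{b + \sum_{k\geq m}h_k + \sum_{k<m}\sigma_{km}h_k}\avg{h_{m,\epsilon}v_m}$ together with a $-\tfrac12\rho_m\jump{v_m^2}\avg{h_{m,\epsilon}v_m}$ term. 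As in the single-layer case, the identity $\tfrac12\jump{v_m^2} = \jump{v_m}\avg{v_m}$ cancels the kinetic-energy contributions against the $\avg{v_m}$ term, so each layer leaves only the gravitational part $\rho_m g \jump{b + \sum_{k\geq m}h_k + \sum_{k<m}\sigma_{km}h_k}\avg{h_{m,\epsilon}v_m}$.

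Next I would expand the nonconservative contribution $\avg{\statevec{w}\circ\statevecGreek{\phi}_{\epsilon}}^T\jump{\statevec{r}_{\epsilon}}$. Since $\statevecGreek{\phi}_{m,\epsilon}$ has only a lower entry $g h_{m,\epsilon}$ and $\statevec{r}_{m,\epsilon}$ has only a lower entry, only the momentum components survive; the relevant factor is $\avg{\rho_m v_m \, g h_{m,\epsilon}} = \rho_m g \avg{h_{m,\epsilon}v_m}$ (the density $\rho_m$ is a constant and factors out of the average), contracted against $\jump{b_{\epsilon} + \sum_{k\geq m}h_{k,\epsilon} + \sum_{k<m}\sigma_{km}h_{k,\epsilon}}$. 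Subtracting this from the flux term, each layer $m$ contributes
\begin{equation}
	\rho_m g \avg{h_{m,\epsilon}v_m}\left(\jump{b + \sum_{k\geq m}h_k + \sum_{k<m}\sigma_{km}h_k} - \jump{b_{\epsilon} + \sum_{k\geq m}h_{k,\epsilon} + \sum_{k<m}\sigma_{km}h_{k,\epsilon}}\right).
\end{equation}
The key algebraic step is then to simplify the difference of jumps inside the parentheses using the fact that the reconstruction preserves the total layer height, i.e. $b + \sum_{k\geq m}h_k = b_{\epsilon} + \sum_{k\geq m}h_{k,\epsilon}$ for each $m$ (which is exactly the analogue of $h+b=h_\epsilon+b_\epsilon$ used in the SWE proof). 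This collapses the difference to $\sum_{k<m}\sigma_{km}\jump{h_k - h_{k,\epsilon}}$.

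At this point I would observe that the two remaining pieces can be combined by adding and subtracting the reconstructed height in the surviving sum: since the total-height identity also gives $\jump{b+\sum_{k\geq m}h_k - b_\epsilon - \sum_{k\geq m}h_{k,\epsilon}} = 0$, I can rewrite $\sum_{k<m}\sigma_{km}\jump{h_k - h_{k,\epsilon}}$ as $\sum_{k<m}(\sigma_{km}-1)\jump{h_k-h_{k,\epsilon}}$ by absorbing the missing $-\jump{h_k-h_{k,\epsilon}}$ terms using the cancellation among the $k\geq m$ and $k<m$ contributions of the same layer. Summing over all layers $m=1,\dots,M$ and factoring out $g$ then yields exactly the claimed residual $g\sum_{m=1}^M \rho_m \sum_{k<m}(\sigma_{km}-1)\jump{h_k-h_{k,\epsilon}}\avg{h_{m,\epsilon}v_m}$.

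The main obstacle I anticipate is the bookkeeping in that last recombination step: tracking which layers are affected by the reconstruction and showing precisely how the $(\sigma_{km}-1)$ factor emerges rather than a bare $\sigma_{km}$. The natural way to control this is to use the total-height preservation identity carefully for each $m$ — noting that $\jump{h_k-h_{k,\epsilon}}$ is nonzero only for layers touched by the bottom reconstruction at a wet/dry front — and to verify that the index shift between the $\geq m$ and $<m$ sums produces exactly the $-1$ correction. Everything else is the same kinetic-energy cancellation and consistency argument already established for the SWE in Lemma~\ref{lemma:ec_swe}.
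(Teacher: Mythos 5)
Your expansion of $\jump{\statevec{w}}^T\statevec{f}^{EC}_{\epsilon}$ and of the nonconservative term, together with the kinetic-energy cancellation $\tfrac12\jump{v_m^2}=\jump{v_m}\avg{v_m}$, matches the paper's proof exactly. The gap is in your key algebraic step: the claim that the reconstruction preserves \emph{each} per-layer total height, $b+\sum_{k\geq m}h_k = b_\epsilon + \sum_{k\geq m}h_{k,\epsilon}$ for every $m$, is false. The reconstruction sets $H_{m,\epsilon}=\max(H_m,b_\epsilon)$, and since $b_\epsilon=\min\left(H_1,\max(b^-,b^+)\right)\leq H_1$ this gives $H_{1,\epsilon}=H_1$, i.e.\ preservation holds \emph{only} for the total column $m=1$. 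For $m\geq 2$ the reconstructed bottom can rise above the layer surface, $b_\epsilon > H_m$, and then $b_\epsilon+\sum_{k\geq m}h_{k,\epsilon}=H_{m,\epsilon}=b_\epsilon\neq H_m$. This failure occurs precisely at wet/dry fronts where the reconstruction is active, i.e.\ exactly in the cases where $\jump{h_k-h_{k,\epsilon}}\neq 0$ and the residual you are computing is nonzero. Concretely, take two layers with $b^-=1$, $h_2^-=1$, $h_1^-=2$ and $b^+=3$, $h_2^+=h_1^+=0.5$: then $b_\epsilon^\pm=3$, $h_{2,\epsilon}^-=0$, $h_{1,\epsilon}^-=1$, so $b^-+h_2^-=2$ while $b_\epsilon^-+h_{2,\epsilon}^-=3$, and $\jump{b+h_2}=1.5\neq 0.5=\jump{b_\epsilon+h_{2,\epsilon}}$.

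Your subsequent ``absorption'' step inherits the same flaw: converting the bare $\sigma_{km}$ into $\sigma_{km}-1$ requires $\sum_{k<m}\jump{h_k-h_{k,\epsilon}}=0$, which is again false (it equals $-1$ in the example above) and is in fact \emph{equivalent}, via the true $H_1$ identity, to the per-layer claim you assumed. The two errors cancel exactly, which is why you land on the correct final formula, but the derivation is circular: you use a false identity to set a term to zero and then use it again to re-insert that very term. The paper's proof avoids this entirely by invoking only the true identity $\jump{\sum_{k=1}^M h_k + b}=\jump{\sum_{k=1}^M h_{k,\epsilon}+b_\epsilon}$: one adds and subtracts $\sum_{k<m}h_k$ inside the first jump to obtain $\jump{\sum_{k=1}^M h_{k,\epsilon}+b_\epsilon}+\sum_{k<m}(\sigma_{km}-1)\jump{h_k}$, subtracts the reconstructed jump, and the leftover $\jump{\sum_{k<m}h_{k,\epsilon}-\sum_{k<m}\sigma_{km}h_{k,\epsilon}} = -\sum_{k<m}(\sigma_{km}-1)\jump{h_{k,\epsilon}}$ combines with the correction to give $\sum_{k<m}(\sigma_{km}-1)\jump{h_k-h_{k,\epsilon}}$. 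Replacing your per-layer claim with this total-column bookkeeping repairs the argument; as written, however, the proof is not valid.
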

\begin{proof}
	The proof is similar to the previous one for the SWE. However, due to the additional coupling terms proportional to $\sigma_{km}$ in each layer $w=w_{\epsilon}$ no longer holds. It is precisely these coupling terms that generate the entropy contributions. As before we expand the terms from \eqref{eq:entropy_conservation_condition} to have
	\begin{equation}
		\begin{aligned}
			&\jump{\statevec{w}}^T\statevec{f}^{EC}_{\epsilon} - \avg{\statevec{w}\circ \statevecGreek{\phi}_{\epsilon}}^T\jump{\statevec{r}_{\epsilon}}
			\\&\quad=
			\sum\limits_{m=1}^M\rho_m\left(g
			\jump{\sum_{k\geq m}h_k + \sum_{k<m}\sigma_{km}h_k + b}
			-\frac{\jump{v_m^2}}{2}\right)\avg{h_{m,\epsilon}v_m}
			+ \rho_m\jump{v_m}\avg{h_{m,\epsilon}v_m}\avg{v_m}
			\\&\qquad-
			\rho_m g\jump{\sum_{k\geq m}h_{k,\epsilon} + \sum_{k<m}\sigma_{km}h_{k,\epsilon} + b_{\epsilon}}\avg{h_{m,\epsilon}v_m}.
		\end{aligned}
	\end{equation}
	Again we use $\frac{\jump{v_m^2}}{2} = \jump{v_m}\avg{v_m}$ to cancel some terms and obtain
	\begin{equation}
		\begin{aligned}
			&\jump{\statevec{w}}^T\statevec{f}^{EC}_{\epsilon} - \avg{\statevec{w}\circ \statevecGreek{\phi}_{\epsilon}}^T\jump{\statevec{r}_{\epsilon}}
			\\& \quad=
			g\sum\limits_{m=1}^M \rho_m 
			\jump{\sum_{k\geq m}h_k + \sum_{k<m}\sigma_{km}h_k + b}
			\avg{h_{m,\epsilon}v_m}
			-
			\rho_m \jump{\sum_{k\geq m}h_{k,\epsilon} + \sum_{k<m}\sigma_{km}h_{k,\epsilon} + b_{\epsilon}}\avg{h_{m,\epsilon}v_m}
		\end{aligned}
		\label{eq:step_wb_proof_ml_swe}
	\end{equation}
	
	We add and subtract $\sum_{k<m}h_k$ within the first jump term and use that the reconstruction preserves the total layer height $H_1$ such that $\jump{\sum_{k=1}^M h_k + b} = \jump{\sum_{k=1}^M h_{k,\epsilon} + b_{\epsilon}}$ to obtain the relation
	\begin{equation}
		\jump{\sum_{k\geq m}h_k + \sum_{k<m}\sigma_{km}h_k + b} = \jump{\sum_{k=1}^M h_{k,\epsilon} + b_{\epsilon}} + \sum_{k<m}(\sigma_{km}-1)\jump{h_k}.
	\end{equation}
	
	Substituting this relation into \eqref{eq:step_wb_proof_ml_swe} we find
	\begin{equation}
		\begin{aligned}
			&\jump{\statevec{w}}^T\statevec{f}^{EC}_{\epsilon} - \avg{\statevec{w}\circ \statevecGreek{\phi}_{\epsilon}}^T\jump{\statevec{r}_{\epsilon}}
			\\&\quad=
			g\sum\limits_{m=1}^M \rho_m 
			\jump{\sum_{k=1}^M h_{k,\epsilon} - \sum_{k\geq m}h_{k,\epsilon} - \sum_{k<m}\sigma_{km}h_{k,\epsilon}}\avg{h_{m,\epsilon}v_m}
			+
			\rho_m \sum_{k<m}(\sigma_{km}-1)\jump{h_k}\avg{h_{m,\epsilon}v_m}
			\\&\quad=
			g\sum\limits_{m=1}^M \rho_m 
			\jump{\sum_{k<m}h_{k,\epsilon} - \sum_{k<m}\sigma_{km}h_{k,\epsilon}}\avg{h_{m,\epsilon}v_m}
			+
			\rho_m \sum_{k<m}(\sigma_{km}-1)\jump{h_k}\avg{h_{m,\epsilon}v_m}
			\\&\quad=
			g\sum\limits_{m=1}^M \rho_m 
			\sum_{k<m}(\sigma_{km}-1)\jump{h_k - h_{k,\epsilon}}\avg{h_{m,\epsilon}v_m},
		\end{aligned}
		\label{eq:proof_ec_mlswe_entropy_contribution}
	\end{equation}
	which shows that the hydrostatic reconstruction for ML-SWE is not entropy conservative. \qed
\end{proof}
\begin{remark}
	From \eqref{eq:proof_ec_mlswe_entropy_contribution} it follows that the entropy violation is generated due to the hydrostatic reconstruction procedure. Without the hydrostatic reconstruction the flux $\statevec{f}^{EC}$ is actually entropy conservative as $\jump{h_k - h_{k,\epsilon}} = 0$.
\end{remark}
Even though $\statevec{f}^{EC}_{\epsilon}$ for the ML-SWE violates entropy conservation, we keep the notation as the flux is indeed EC if applied without the hydrostatic reconstruction.

As solutions of \eqref{eq:balance_law_continuous} may develop discontinuities in finite time, the EC scheme is only an intermediate step. Instead our objective is to recover an entropy stable scheme that satisfies the entropy inequality \eqref{eq:entropy_inequality} and ensures entropy dissipation at shocks. From this requirement, analogous to the entropy conservation condition \eqref{eq:entropy_conservation_condition}, we derive the following condition for an entropy stable (ES) numerical flux
\begin{equation}
	\jump{\statevec{w}}^T\statevec{f}^{ES}_{\epsilon} - \avg{\statevec{w}\circ \statevecGreek{\phi}_{\epsilon}}^T\jump{\statevec{r}_{\epsilon}} \leq 0.
\end{equation}

Following the idea of Tadmor \cite{tadmor2003entropy}, we aim construct such an entropy stable flux $\statevec{f}^{ES}$ by adding local Lax-Friedrichs (LLF) type numerical dissipation to our entropy conserving baseline scheme
\begin{equation}
	\statevec{f}_{\epsilon}^{ES} \coloneqq \statevec{f}_{\epsilon}^{EC} - \frac{1}{2}|\lambda_{\max}|\jump{\statevec{u}_{\epsilon}},
	\label{eq:definition_es_flux}
\end{equation}
where $\lambda_{\max}$ denotes a local estimate for the fastest wave speed and $\statevec{u}_{\epsilon}$ are the reconstructed conservative variables \eqref{eq:reconstruction_conservative_variables}. Since there is no closed-form solution of the eigenvalues for layers $M \geq 3$, we consider the approximation from \cite{lastra2018efficient} to obtain the estimate
\begin{equation}
	\lambda_{\max} \coloneqq \max\left(\frac{\sum_{m=1}^M hv_m^+}{\sum_{m=1}^M h_m^+}, \frac{\sum_{m=1}^M hv_m^-}{\sum_{m=1}^M h_m^-},
	\underset{m}{\max} \,v^+_m,
	\underset{m}{\max} \,v^-_m \right) 
	+ 
	\max\left(\sqrt{g\sum_{m=1}^M h_m^+}, \sqrt{g\sum_{m=1}^M h_m^-}\right).
	\label{eq:lambda_max}
\end{equation} 

For entropy dissipation it is then typically required to make a change of variables $\jump{\statevec{u}} \approxeq \statevec{H}\jump{\statevec{w}}$, where $\statevec{H}$ is the symmetric positive definite Jacobian of the entropy variables \cite{carpenter2014entropy}.
However, we found that this approach faces several problems when applied to the present scheme.
First, as the entropy variables additionally depend on the bottom topography, in case of discontinuous bottom topographies there is no one-to-one mapping between conservative and entropy variables. This leads to the loss of the well-balanced property at wet/dry transitions, whereas the formulation in terms of reconstructed conservative variables from \eqref{eq:definition_es_flux} preserves the well-balanced property.
\begin{lemma}
	The entropy stable flux \eqref{eq:definition_es_flux} preserves the lake-at-rest conditions (\ref{eq:lake-at-rest_swe},\ref{eq:lake-at-rest_multilayer}), if the underlying EC flux is well-balanced.
\end{lemma}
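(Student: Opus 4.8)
The plan is to exploit that the entropy stable flux \eqref{eq:definition_es_flux} differs from the entropy conservative flux only by the local Lax--Friedrichs dissipation $-\frac{1}{2}|\lambda_{\max}|\jump{\statevec{u}_{\epsilon}}$. Since the nonconservative interface contribution $(\statevecGreek{\phi}\circ\statevec{r})^{\diamond}_{\epsilon}$ does not depend on the numerical flux and already vanishes at lake-at-rest by Lemmas~\ref{lemma:wb} and~\ref{lemma:wb_ml}, well-balancedness of the ES scheme follows once I show that the added dissipation does not disturb the steady state, i.e. that $\statevec{f}^{ES}_{\epsilon}=\statevec{f}^{EC}_{\epsilon}$ whenever the lake-at-rest conditions (\ref{eq:lake-at-rest_swe},\ref{eq:lake-at-rest_multilayer}) hold. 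As the EC flux is well-balanced by hypothesis, the entire claim thus reduces to proving that the dissipation term vanishes at lake-at-rest, which is equivalent to establishing $\jump{\statevec{u}_{\epsilon}}=0$.

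First I would dispose of the momentum component. Under lake-at-rest every velocity $v_m$ is zero, so the reconstructed momentum $h_{m,\epsilon}(v_m)_i$ in \eqref{eq:reconstruction_conservative_variables} is identically zero on both sides of the interface, and the second component of $\jump{\statevec{u}_{\epsilon}}$ vanishes trivially.

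The crux of the argument is then the height component $\jump{h_{m,\epsilon}}$. Here I would reuse the structural facts already extracted in the earlier well-balancedness proofs: at a wet interface the constant free surface $H_1$ forces the reconstructed bottom $b_{\epsilon}^{\pm}=\min(H_1^{\pm},\max(b^-,b^+))$ to be continuous, and since each total layer height $H_m$ is constant across the interface, the reconstructed totals $H_{m,\epsilon}^{\pm}=\max(H_m^{\pm},b_{\epsilon}^{\pm})$ and hence the layer heights $h_{m,\epsilon}$ obtained from \eqref{eq:reconstruction_waterheight} are likewise continuous, so $\jump{h_{m,\epsilon}}=0$; at a dry interface the reconstruction yields $h_{m,\epsilon}^{\pm}=0$ on both sides, giving again $\jump{h_{m,\epsilon}}=0$. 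Combining both components yields $\jump{\statevec{u}_{\epsilon}}=0$, so the dissipation term drops out and $\statevec{f}^{ES}_{\epsilon}=\statevec{f}^{EC}_{\epsilon}$ at lake-at-rest.

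The main obstacle is precisely this height component, as it is the only place where the detailed structure of the reconstruction is needed; in particular one must check that the continuity of the reconstructed bottom and total heights survives the $\max$ operations and carries through the differencing in \eqref{eq:reconstruction_waterheight} for every layer, including the partially dry configuration of Figure~\ref{fig:layer_configurations}. Once this is secured, the conclusion is immediate: the ES scheme coincides with the EC scheme on lake-at-rest data, and well-balancedness is inherited directly from the assumed well-balancedness of the underlying EC flux.
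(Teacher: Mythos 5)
Your proposal is correct and takes essentially the same route as the paper: the paper's own proof consists precisely of observing that at lake-at-rest the reconstructed conservative variables satisfy $\jump{\statevec{u}_{\epsilon}}=0$, so the dissipation term vanishes and well-balancedness follows from $\statevec{f}^{ES}_{\epsilon}=\statevec{f}^{EC}_{\epsilon}$. You simply supply the per-component justification of $\jump{\statevec{u}_{\epsilon}}=0$ (momentum via $v_m=0$, heights via the wet/dry case analysis of the reconstruction) that the paper leaves implicit.
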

\begin{proof}
	Under the lake-at-rest conditions \eqref{eq:lake-at-rest_swe} and \eqref{eq:lake-at-rest_multilayer} the reconstructed conservative variables \eqref{eq:reconstruction_conservative_variables} satisfy $\jump{\statevec{u}_{\epsilon}}=0$ such that the dissipation term in \eqref{eq:definition_es_flux} vanishes and well-balanced follows from $\statevec{f}_{\epsilon}^{ES} = \statevec{f}_{\epsilon}^{EC}$. \qed
\end{proof}

In addition the formulation in terms of entropy variables is numerically unstable for the ML-SWE as the Jacobian $\statevec{H}$ may become singular for vanishing layer heights.

Considering these issues, we now show that for the present systems entropy stability can be achieved with standard LLF dissipation making the change of variables $\jump{\statevec{u}}=\statevec{H}\jump{\statevec{w}}$ obsolete.
This provides a remedy for the previous issues and is computationally more efficient as it avoids the additional matrix-vector multiplication.

\begin{lemma}[Entropy stability for the SWE]
	The hydrostatic reconstruction scheme \eqref{eq:hydrostatic_reconstruction_scheme} for the SWE \eqref{eq:system_swe} with ES flux \eqref{eq:definition_es_flux} and nonconservative term \eqref{eq:nonconservative_flux} is entropy stable.
\end{lemma}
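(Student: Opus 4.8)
The plan is to reduce the entropy stability condition to a single sign condition and then verify it from the structure of the reconstruction. Substituting the ES flux \eqref{eq:definition_es_flux} into the total discrete entropy balance \eqref{eq:fv_total_discrete_entropy} and using the entropy flux potential \eqref{eq:entropy_flux_potential}, the interface contribution splits into an entropy-conservative part and a dissipation part,
\begin{equation}
	\jump{\statevec{w}}^T\statevec{f}^{ES}_{\epsilon} - \avg{\statevec{w}\circ\statevecGreek{\phi}_{\epsilon}}^T\jump{\statevec{r}_{\epsilon}}
	=
	\left(\jump{\statevec{w}}^T\statevec{f}^{EC}_{\epsilon} - \avg{\statevec{w}\circ\statevecGreek{\phi}_{\epsilon}}^T\jump{\statevec{r}_{\epsilon}}\right)
	- \frac{1}{2}|\lambda_{\max}|\,\jump{\statevec{w}}^T\jump{\statevec{u}_{\epsilon}}.
\end{equation}
By Lemma \ref{lemma:ec_swe} the bracketed term vanishes, so entropy stability reduces to showing that the dissipation is nonnegative, i.e.\ $\jump{\statevec{w}}^T\jump{\statevec{u}_{\epsilon}} \geq 0$, since the prefactor $\tfrac{1}{2}|\lambda_{\max}|$ is nonnegative.

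The main obstacle is that this is \emph{not} the usual convexity pairing: the entropy variables $\statevec{w}$ are evaluated at the original cell states, whereas $\statevec{u}_{\epsilon}$ are the reconstructed variables, so the standard argument $\jump{\statevec{w}}^T\jump{\statevec{u}} \geq 0$ via positive definiteness of the Jacobian $\statevec{H}=\partial_{\statevec{u}}\statevec{w}$ does not apply directly because of this state mismatch. Instead I would expand the pairing componentwise. Writing the SWE entropy variables as $\statevec{w} = (g(h+b) - \tfrac{1}{2}v^2,\, v)^T$ and the reconstructed state as $\statevec{u}_{\epsilon} = (h_{\epsilon},\, h_{\epsilon}v)^T$ (recall only the height is reconstructed, while the velocity is the cell value), and using the algebraic identities $\tfrac{1}{2}\jump{v^2} = \avg{v}\jump{v}$ and $\jump{h_{\epsilon}v} = \avg{h_{\epsilon}}\jump{v} + \avg{v}\jump{h_{\epsilon}}$, the cross terms proportional to $\avg{v}\jump{v}\jump{h_{\epsilon}}$ cancel and I expect to arrive at
\begin{equation}
	\jump{\statevec{w}}^T\jump{\statevec{u}_{\epsilon}} = g\jump{H}\jump{h_{\epsilon}} + \avg{h_{\epsilon}}\jump{v}^2,
\end{equation}
where $H=h+b$ denotes the free surface, which the reconstruction leaves unchanged ($H_{\epsilon}=H$).

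It then remains to check that both terms are nonnegative. The velocity term is immediate from positivity of the reconstructed heights, $\avg{h_{\epsilon}} \geq 0$, which is guaranteed by the $\max$ in \eqref{eq:reconstruction_layerheight}. For the surface term I would exploit the explicit form of the reconstruction: from \eqref{eq:reconstruction_bottom}--\eqref{eq:reconstruction_waterheight} one finds $h_{\epsilon}^{\pm} = \max\!\left(0,\, H^{\pm} - \max(b^-,b^+)\right)$, so that $h_{\epsilon}$ is a monotone (non-decreasing) function of the free surface $H$ through the \emph{same} interface-wide threshold $\max(b^-,b^+)$ on both sides. Consequently $\jump{H}$ and $\jump{h_{\epsilon}}$ always carry the same sign, whence $\jump{H}\jump{h_{\epsilon}} \geq 0$. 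This gives $\jump{\statevec{w}}^T\jump{\statevec{u}_{\epsilon}} \geq 0$, the dissipation term is nonpositive, and the total discrete entropy balance is $\leq 0$, recovering the semi-discrete entropy inequality \eqref{eq:entropy_inequality}. The crux of the argument is precisely this monotonicity of the reconstruction map, which substitutes for the convexity/positive-definiteness argument that fails due to the mismatch between $\statevec{w}$ and $\statevec{u}_{\epsilon}$.
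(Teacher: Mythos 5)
Your proposal is correct, and its skeleton is the same as the paper's: invoke Lemma~\ref{lemma:ec_swe} to reduce entropy stability to the sign condition $\jump{\statevec{w}}^T\jump{\statevec{u}_{\epsilon}} \geq 0$, then expand componentwise and use the discrete product rule to absorb the velocity terms into $\avg{h_{\epsilon}}\jump{v}^2 \geq 0$. The only genuine difference is the finishing argument. The paper identifies $\statevec{w}=\statevec{w}_{\epsilon}$ via free-surface preservation, writes $g\jump{h_{\epsilon}+b_{\epsilon}}\jump{h_{\epsilon}} = g\jump{h_{\epsilon}}^2 + g\jump{b_{\epsilon}}\jump{h_{\epsilon}}$, and disposes of the cross term $g\jump{b_{\epsilon}}\jump{h_{\epsilon}}\geq 0$ by a three-case analysis (wet: $\jump{b_{\epsilon}}=0$; dry: $\jump{h_{\epsilon}}=0$; partially dry: matching signs). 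You instead keep $g\jump{H}\jump{h_{\epsilon}}$ intact and prove its nonnegativity in one stroke from the closed form $h_{\epsilon}^{\pm} = \max\left(0,\, H^{\pm} - \max(b^-,b^+)\right)$, which follows from \eqref{eq:reconstruction_bottom}--\eqref{eq:reconstruction_waterheight}: both interface states pass through the \emph{same} monotone non-decreasing map of the free surface, so $\jump{H}$ and $\jump{h_{\epsilon}}$ share sign. This monotonicity observation is correct (I verified the closed form: $b_{\epsilon}^{\pm}\leq H^{\pm}$ always, so $H_{\epsilon}^{\pm}=H^{\pm}$ and $h_{\epsilon}^{\pm}=H^{\pm}-\min(H^{\pm},\max(b^-,b^+))$) and it subsumes the paper's case split as special cases, which makes it arguably cleaner and more self-contained for the SWE. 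What the paper's formulation buys in exchange is reusability: the explicit wet/dry/partially-dry trichotomy and the isolated term $g\jump{b_{\epsilon}}\jump{h_{m,\epsilon}}\geq 0$ are exactly what gets recycled verbatim, layer by layer, in the ML-SWE entropy stability proof, where a single scalar monotone map of one free surface no longer exists. Your argument as stated is specific to $M=1$, but for the lemma at hand it is complete.
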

\begin{proof}
	We know from Lemma \ref{lemma:ec_swe}, that $\statevec{f}^{EC}$ leads to entropy conservation. Entropy stability then follows if the ES flux is more dissipative than the EC flux, so we must show that
	\begin{equation}
		\jump{\statevec{w}}^T\left(-\frac{1}{2}|\lambda_{max}|\jump{\statevec{u}_{\epsilon}}\right)= -\frac{1}{2}|\lambda_{max}| \jump{\statevec{w}}^T\jump{\statevec{u}_{\epsilon}}
		\leq 0,
	\end{equation}
	which is satisfied provided $\jump{\statevec{w}}^T\jump{\statevec{u}_{\epsilon}} \geq 0$.
	
	For the SWE we then use that the reconstruction satisfies $\statevec{w} = \statevec{w}_{\epsilon}$. Now expanding the variables we obtain
	\begin{equation}
		\jump{\statevec{w}}^T\jump{\statevec{u}_{\epsilon}}
		=
		\jump{\statevec{w_{\epsilon}}}^T\jump{\statevec{u}_{\epsilon}} 
		=
		\left(g\jump{h_{\epsilon}+b_{\epsilon}}-\frac{\jump{v^2}}{2}\right) \jump{h_{\epsilon}}
		+
		\jump{v}\jump{h_{\epsilon}v}.
		\label{eq:proof_es_swe_step_1}
	\end{equation}
	Combining the last two terms we find the equality
	\begin{equation}
		-\frac{\jump{v^2}}{2}\jump{h_{\epsilon}}
		+
		\jump{v}\jump{h_{\epsilon}v_{\epsilon}}
		= \avg{h_{\epsilon}}\jump{v}^2,
		\label{eq:proof_es_swe_product_rule}
	\end{equation}
	which is a discrete analogue of the product rule
	\begin{equation}
		-\frac{1}{2}v_x^2h_x + v_x(hv)_x
		=
		-h_xvv_x + h_xvv_x + hv_xv_x  = hv_x^2,
	\end{equation}
	where subscript $x$ denotes a derivative.
	Substituting \eqref{eq:proof_es_swe_product_rule} into \eqref{eq:proof_es_swe_step_1} we obtain quadratic terms and use $\avg{h_{\epsilon}}\geq0$, to obtain
	\begin{equation}
		g\jump{h_{\epsilon}+b_{\epsilon}}\jump{h_{\epsilon}} + \avg{h_{\epsilon}}\jump{v}^2
		=
		g\jump{b_{\epsilon}}\jump{h_{\epsilon}} + 	g\jump{h_{\epsilon}}^2+ \avg{h_{\epsilon}}\jump{v}^2
		\geq 
		g\jump{b_{\epsilon}}\jump{h_{\epsilon}}.
	\end{equation}
	From the novel reconstruction it follows that $g\jump{b_{\epsilon}}\jump{h_{\epsilon}} \geq 0 $. 
	As depicted in Figure~\ref{fig:layer_configurations}, we may encounter either wet, dry, or partially dry interfaces.
	At fully wet interfaces the bottom reconstruction is continuous and the contribution vanishes from $\jump{b_{\epsilon}}=0$, whereas on dry interfaces we obtain $\jump{h_{\epsilon}} = 0$ as the water heights are zero and again the contribution vanishes.
	At partially dry interfaces we obtain $\text{sgn}(\jump{b_{\epsilon}}) = \text{sgn}(\jump{h_\epsilon})$ and the contribution is positive, which shows that $g\jump{b_{\epsilon}}\jump{h_{\epsilon}} \geq 0 $ and accordingly $\jump{\statevec{w}}^T\jump{\statevec{u}_{\epsilon}} \geq 0$.
	\qed
\end{proof}

For the ML-SWE we demonstrated in Lemma \ref{lemma:entropy_contribution_MLSWE} that the EC flux introduces an additional entropy contribution. Even though the EC flux leads to entropy violation, entropy stability can be achieved if the entropy production is countered by sufficient entropy dissipation of the form introduced in \eqref{eq:definition_es_flux}.

\begin{lemma}[Entropy stability for the ML-SWE]
	The hydrostatic reconstruction scheme \eqref{eq:hydrostatic_reconstruction_scheme} for the ML-SWE \eqref{eq:system_multilayer} with ES flux \eqref{eq:definition_es_flux} and nonconservative term \eqref{eq:nonconservative_flux} is entropy stable.
\end{lemma}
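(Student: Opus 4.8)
The plan is to mirror the structure of the SWE entropy stability proof, but now account for the fact that the underlying EC flux is no longer entropy conservative. Substituting the ES flux \eqref{eq:definition_es_flux} into the entropy balance and invoking Lemma \ref{lemma:entropy_contribution_MLSWE} for the EC part, the entropy stability condition $\jump{\statevec{w}}^T\statevec{f}^{ES}_{\epsilon} - \avg{\statevec{w}\circ\statevecGreek{\phi}_{\epsilon}}^T\jump{\statevec{r}_{\epsilon}} \leq 0$ reduces to showing that the added local Lax--Friedrichs dissipation dominates the reconstruction-induced entropy production, i.e.
\begin{equation*}
	g\sum_{m=1}^M\rho_m\sum_{k<m}(\sigma_{km}-1)\jump{h_k - h_{k,\epsilon}}\avg{h_{m,\epsilon}v_m}
	\;\leq\;
	\frac{1}{2}|\lambda_{\max}|\,\jump{\statevec{w}}^T\jump{\statevec{u}_{\epsilon}}.
\end{equation*}

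First I would evaluate the dissipation bilinear form $\jump{\statevec{w}}^T\jump{\statevec{u}_{\epsilon}}$ layer by layer exactly as in the SWE case. Expanding the entropy variables and eliminating the kinetic contributions with the discrete product rule \eqref{eq:proof_es_swe_product_rule} yields a clean split into a manifestly nonnegative kinetic part $\sum_m\rho_m\avg{h_{m,\epsilon}}\jump{v_m}^2$ and a pressure part $g\sum_m\rho_m\jump{p_m}\jump{h_{m,\epsilon}}$, where $p_m \coloneqq b + \sum_{k\geq m}h_k + \sum_{k<m}\sigma_{km}h_k$ is the nonzero component of $\statevec{r}_m$. For the pressure part I would reuse the decomposition of $\jump{p_m}$ from the proof of Lemma \ref{lemma:entropy_contribution_MLSWE} together with the three-case wet/dry analysis of Figure \ref{fig:layer_configurations} (continuous bottom so $\jump{b_\epsilon}=0$ at wet fronts; $h_{m,\epsilon}=0$ at dry fronts; $\operatorname{sgn}\jump{b_\epsilon}=\operatorname{sgn}\jump{h_{m,\epsilon}}$ at partially dry fronts) to extract the nonnegative contributions that mirror the SWE estimate.

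The crucial structural ingredient is the observation already used in Lemma \ref{lemma:vanishing_hr_terms}: because the reconstruction \eqref{eq:reconstruction_layerheight}--\eqref{eq:reconstruction_waterheight} raises $H_{m,\epsilon}$ to $b_\epsilon$ from the bottom upward, a nonzero reconstruction error $h_k - h_{k,\epsilon}\neq 0$ in an upper layer $k$ on one side forces every lower layer $m>k$ to be dry on that same side, so that $h_{m,\epsilon}=0$ there. This eliminates the same-side products in the entropy production and leaves only cross terms that couple the drying side of layer $k$ to the wet side of layer $m$. I would then control these surviving cross terms against the kinetic dissipation $\avg{h_{m,\epsilon}}\jump{v_m}^2$ and the pressure dissipation by a Young-type inequality, exploiting that at a wet/dry front the reconstructed momentum on the dry side vanishes, so the single-sided velocities entering the production appear effectively as velocity jumps that the kinetic dissipation can absorb.

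I expect the main obstacle to be precisely this last step. Unlike for the SWE, the ML-SWE production is not sign-definite and couples distinct layers through the density ratios $\sigma_{km}$, so one cannot merely argue that the added dissipation is nonnegative. The delicate part is to verify that the wave-speed estimate $\lambda_{\max}$ in \eqref{eq:lambda_max} is large enough to absorb the reconstruction-induced production for every admissible wet/dry configuration, which requires tracking the layer-coupling terms carefully and balancing the weights in the Young-type inequality so that all remainder terms are dominated by the available kinetic and pressure dissipation.
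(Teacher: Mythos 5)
Your overall skeleton matches the paper's proof closely: the reduction to showing that the LLF dissipation dominates the production quantified in Lemma \ref{lemma:entropy_contribution_MLSWE}, the layer-wise expansion of $\jump{\statevec{w}}^T\jump{\statevec{u}_{\epsilon}}$ with the discrete product rule, the decomposition of the pressure jumps via preservation of $H_1$, the three-case wet/dry sign analysis for $\jump{b_{\epsilon}}\jump{h_{m,\epsilon}}$, and -- crucially -- the observation that a reconstruction error $h_k - h_{k,\epsilon}\neq 0$ in an upper layer forces all lower layers on that side to be dry. However, the decisive final step is where your proposal has a genuine gap, and it points in the wrong direction. The paper uses \emph{no} Young-type inequality and makes \emph{no} use of the kinetic dissipation $\rho_m\avg{h_{m,\epsilon}}\jump{v_m}^2$ (it is discarded immediately). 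After adding and subtracting $h_{k,\epsilon}$ and dropping the manifestly nonnegative pieces -- including a cross-layer quadratic form whose matrix with entries $\rho_{\min(m,k)}$ is symmetric positive semidefinite (the MIN-matrix result cited from Mattila, a nontrivial ingredient you gloss over with ``extract the nonnegative contributions'') -- the surviving pressure dissipation is $g\sum_m\rho_m\sum_{k<m}(\sigma_{km}-1)\jump{h_k-h_{k,\epsilon}}\jump{h_{m,\epsilon}}$, which carries \emph{exactly the same prefactor} $(\sigma_{km}-1)\jump{h_k-h_{k,\epsilon}}$ as the production. The stability condition therefore collapses, pair by pair, to checking $\avg{h_{m,\epsilon}v_m}-\frac{|\lambda_{\max}|}{2}\jump{h_{m,\epsilon}}\leq 0$ whenever that prefactor is nonzero; with $h_{m,\epsilon}^-=0$ this is $\frac{h_m^+}{2}\left(v_m^+-|\lambda_{\max}|\right)\leq 0$, immediate from the wave-speed bound \eqref{eq:lambda_max}. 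No balancing of weights is needed.

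Your proposed alternative -- absorbing the production into the kinetic dissipation via Young's inequality -- would fail as sketched, for two reasons. First, the claim that ``single-sided velocities entering the production appear effectively as velocity jumps'' is unjustified: on the reconstructed (dry) side only the momentum $h_{m,\epsilon}^-v_m^-=0$ vanishes, because $h_{m,\epsilon}^-=0$, not the velocity $v_m^-$ itself (the layer may be wet before reconstruction), so $v_m^+\neq\jump{v_m}$ in general and $\avg{h_{m,\epsilon}}\jump{v_m}^2$ gives no control of $v_m^+$. Second, even granting $v_m^-=0$, the production is linear in $v_m^+$ while the kinetic dissipation is quadratic in it, so the kinetic term cannot dominate for small velocities; any Young splitting leaves a velocity-independent remainder that must be absorbed by the pressure dissipation with precisely the pairing the paper exploits directly -- and you yourself flag this balancing as an unresolved ``main obstacle.'' In short, you identified the right structural ingredients but stopped short of, and steered away from, the term-by-term cancellation that actually closes the proof.
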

\begin{proof}
	For entropy stability we must show that 
	\begin{equation}
		\jump{\statevec{w}}^T\statevec{f}^{ES}_{\epsilon} - \avg{\statevec{w}\circ \statevecGreek{\phi}_{\epsilon}}^T\jump{\statevec{r}_{\epsilon}} 
		\leq 0,
	\end{equation}
	or equivalently
	\begin{equation}
		\jump{\statevec{w}}^T\statevec{f}^{EC}_{\epsilon} - \avg{\statevec{w}\circ \statevecGreek{\phi}_{\epsilon}}^T\jump{\statevec{r}_{\epsilon}} 
		- \frac{|\lambda_{\max}|}{2}\jump{\statevec{w}}^T\jump{\statevec{u}_{\epsilon}} 
		\leq 0.
	\end{equation}
	From Lemma \ref{lemma:entropy_contribution_MLSWE} we know that the EC scheme generates the additional entropy contribution
	\begin{equation}
		\jump{\statevec{w}}^T\statevec{f}^{EC}_{\epsilon} - \avg{\statevec{w}\circ \statevecGreek{\phi}_{\epsilon}}^T\jump{\statevec{r}_{\epsilon}} 
		= 
		g\sum\limits_{m=1}^M \rho_m  
		\sum_{k<m}(\sigma_{km}-1)\jump{h_k - h_{k,\epsilon}}\avg{h_{m,\epsilon}v_m}.
	\end{equation}
	
	In order to achieve entropy stability, we ensure that the additional dissipation in the ES flux counters this entropy production. Therefore, we evaluate the additional entropy dissipation introduced by the ES flux and demonstrate that it indeed counters the entropy production such that
	\begin{equation}
		g\sum\limits_{m=1}^M \rho_m 
		\sum_{k<m}(\sigma_{km}-1)\jump{h_k - h_{k,\epsilon}}\avg{h_{m,\epsilon}v_m}
		-\frac{|\lambda_{\max}|}{2}\jump{\statevec{w}}^T\jump{\statevec{u}_{\epsilon}}
		\leq 0.
		\label{eq:es_proof_proposition_1}
	\end{equation}
	To examine the contribution of the dissipation we expand the inner product in the last term to find
	\begin{equation}
		\jump{\statevec{w}}^T\jump{\statevec{u}_{\epsilon}}
		=
		\sum\limits_{m=1}^M \rho_m \left(g
		\jump{\sum_{k\geq m}h_k + \sum_{k<m}\sigma_{km}h_k + b}
		-\frac{\jump{v_m^2}}{2}\right)\jump{h_{m,\epsilon}}
		+ \rho_m \jump{v_m}\jump{h_{m,\epsilon}v_m}.
	\end{equation}
	
	Analogous to the SWE we use the discrete product rule to combine terms
	\begin{equation}
		\jump{v_m}\jump{h_{m,\epsilon}v_m} - \frac{\jump{v_m^2}}{2}\jump{h_{m,\epsilon}} 
		= \avg{h_{m,\epsilon}}\jump{v_m}^2
	\end{equation}
	and rewrite the velocity contributions into a quadratic form. This yields the inequality
	\begin{equation}
		\begin{aligned}
			\jump{\statevec{w}}^T\jump{\statevec{u}_{\epsilon}}
			&=
			\sum\limits_{m=1}^M
			\rho_m g\jump{\sum_{k\geq m}h_k + \sum_{k<m}\sigma_{km}h_k + b_{\epsilon}}\jump{h_{m,\epsilon}} + \rho_m \avg{h_{m,\epsilon}}\jump{v_m}^2
			\\ &\geq
			g\sum\limits_{m=1}^M \rho_m 
			\jump{\sum_{k\geq m}h_k + \sum_{k<m}\sigma_{km}h_k + b}\jump{h_{m,\epsilon}}. 
		\end{aligned}
	\end{equation}
	
	Next, we add and subtract $\sum_{k<m}h_k$ within the jump term and use that the reconstruction preserves the total layer height $H_1$ to find 
	\begin{equation}
		\begin{aligned}
			\jump{\statevec{w}}^T\jump{\statevec{u}_{\epsilon}}
			&\geq
			g\sum\limits_{m=1}^M \rho_m 
			\jump{\sum_{k=1}^M h_k + b}\jump{h_{m,\epsilon}} 
			+ 
			\rho_m \sum_{k<m}\left(\sigma_{km}-1\right)\jump{h_k}\jump{h_{m,\epsilon}}
			\\&=
			g\sum\limits_{m=1}^M \rho_m 
			\sum\limits_{k=1}^M \jump{h_{k,\epsilon}}\jump{h_{m,\epsilon}} 
			+ g\rho_m \jump{b_{\epsilon}}\jump{h_{m,\epsilon}} 
			+ \rho_m \sum_{k<m}\left(\sigma_{km}-1\right)\jump{h_k}\jump{h_{m,\epsilon}}
			\\&\geq
			g\sum\limits_{m=1}^M \rho_m 
			\sum\limits_{k=1}^M \jump{h_{k,\epsilon}}\jump{h_{m,\epsilon}} 
			+ \rho_m \sum_{k<m}\left(\sigma_{km}-1\right)\jump{h_k}\jump{h_{m,\epsilon}}.
		\end{aligned}
		\label{eq:proof_es_mlswe_step_1}
	\end{equation}
	The last inequality follows, again, from the reconstruction procedure. Analogous to the SWE the reconstruction ensures that either $\jump{b_{\epsilon}} = 0$ in the wet case, $\jump{h_{k,\epsilon}}=0$ in the dry case, or $\text{sgn}(\jump{b_{\epsilon}}) = \text{sgn}(\jump{h_{m,\epsilon}})$ in the partially dry case. Therefore, $g\jump{b_{\epsilon}}\jump{h_{m,\epsilon}}$ is always non-negative.
	
	We further manipulate \eqref{eq:proof_es_mlswe_step_1} where we add and subtract $h_{k,\epsilon}$ to derive a quadratic form with a MIN matrix that is symmetric positive semidefinite, as shown in \cite[Theorem 8.1]{mattila2016studying}.
	\begin{equation}
		\begin{aligned}
			\jump{\statevec{w}}^T\jump{\statevec{u}_{\epsilon}}
			&\geq
			g\sum\limits_{m=1}^M \rho_m 
			\sum\limits_{k=1}^M \jump{h_{k,\epsilon}}\jump{h_{m,\epsilon}} 
			+ \rho_m \sum_{k<m}\left(\sigma_{km}-1\right)\jump{h_k}\jump{h_{m,\epsilon}}
			\\&=
			g\sum\limits_{m=1}^M \rho_m 
			\sum\limits_{k=1}^M \jump{h_{k,\epsilon}}\jump{h_{m,\epsilon}} 
			+ \rho_m \sum_{k<m}\left(\sigma_{km}-1\right)\jump{h_{k,\epsilon}}\jump{h_{m,\epsilon}}
			+ \rho_m \sum_{k<m}\left(\sigma_{km}-1\right)\jump{h_k - h_{k,\epsilon}}\jump{h_{m,\epsilon}}
			\\&=
			g\begin{pmatrix}
				\jump{h_{1,\epsilon}} \\ \vdots \\  \jump{h_{M,\epsilon}}
			\end{pmatrix}^T
			\begin{pmatrix}
				\rho_1  & \rho_1 & \hdots & \rho_1 \\
				\rho_1 & \rho_2 & \hdots & \rho_2  \\
				\vdots & \vdots & \ddots & \vdots  \\
				\rho_1 & \rho_2 & \hdots & \rho_M \\
			\end{pmatrix}
			\begin{pmatrix}
				\jump{h_{1,\epsilon}} \\ \vdots \\ \jump{h_{M,\epsilon}}
			\end{pmatrix}
			+ g\sum\limits_{m=1}^M \rho_m \sum_{k<m}\left(\sigma_{km}-1\right)\jump{h_k - h_{\epsilon,k}}\jump{h_{m,\epsilon}}
			\\&\geq
			g\sum\limits_{m=1}^M \rho_m \sum_{k<m}\left(\sigma_{km}-1\right)\jump{h_k - h_{\epsilon,k}}\jump{h_{m,\epsilon}}.
		\end{aligned}
	\end{equation} 
	We then combine this result with the entropy contribution from Lemma \ref{lemma:entropy_contribution_MLSWE} to obtain
	\begin{equation}
		\begin{aligned}
			&\jump{\statevec{w}}^T\statevec{f}^{ES}_{\epsilon} - \avg{\statevec{w}\circ \statevecGreek{\phi}_{\epsilon}}^T\jump{\statevec{r}_{\epsilon}}
			\leq
			g\sum\limits_{m=1}^M \rho_m 
			\sum_{k<m}(\sigma_{km}-1)\jump{h_k - h_{k,\epsilon}}
			\left(\avg{h_{m,\epsilon}v_m} - \jump{h_{m,\epsilon}}\frac{|\lambda_{\max}|}{2} \right)
		\end{aligned}
	\end{equation}
	Now if layers $k<m$ are not reconstructed then $\jump{h_k - h_{k,\epsilon}} = 0$ and entropy stability is achieved. So,
	we need only consider the case that there is some reconstruction in layers $k<m$.
	Assuming the reconstruction takes place on the $\boldsymbol{-}$ side, the $\boldsymbol{+}$ side is not reconstructed and $h_k^+ - h_{k,\epsilon}^+ = 0$. If any layer $k<m$ is reconstructed then, from linearity of the bottom reconstruction, it follows that layer $m$ on the respective side must be dry, such that $h_{m,\epsilon}^- = 0$ and
	\begin{equation}
		\begin{aligned}
			&\jump{\statevec{w}}^T\statevec{f}^{ES}_{\epsilon} - \avg{\statevec{w}\circ \statevecGreek{\phi}_{\epsilon}}^T\jump{\statevec{r}_{\epsilon}}
			\leq
			g\sum\limits_{m=1}^M \rho_m 
			\sum_{k<m}(\sigma_{km}-1)(h_{k,\epsilon}^- - h_k^-)
			\frac{h_{m}^+}{2}\left(v^+_m - |\lambda_{\max}|\right).
		\end{aligned}
	\end{equation}
	An analogous expression is obtained when considering reconstruction on the $\boldsymbol{+}$ element 
	and entropy stability follows from $|\lambda_{\max}| \geq \{v^+_m,v^-_m\}$, $0 \leq h^{\pm}_{k,\epsilon} \leq h_k^{\pm}$ and $\sigma_{km}<1$.
	\qed
\end{proof}

\subsection{Positivity-preservation}
Ensuring positivity-preservation is a key requirement when considering wetting and drying for shallow water flows to avoid unphysical solutions and related numerical problems. The following Lemma establishes positivity-preservation of the present hydrostatic reconstruction scheme under a given time step restriction.
\begin{lemma}[Positivity-preservation]
	The hydrostatic reconstruction scheme \eqref{eq:hydrostatic_reconstruction_scheme} with ES flux \eqref{eq:definition_es_flux} is positivity preserving under the CFL condition
	\begin{equation}
		\Delta t \leq \frac{\Delta x}{2|\lambda_{max}|}
	\end{equation}
	when a first-order forward Euler method for time integration is used.
\end{lemma}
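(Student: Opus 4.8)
The plan is to reduce the statement to the continuity (water height) equation of each layer and to recognise the resulting update as a monotone, Rusanov-type finite volume scheme. Since the nonconservative term carries the structure $\statevecGreek{\phi}_m = g(0,h_m)^T$, it contributes only to the momentum component, so the first component of \eqref{eq:hydrostatic_reconstruction_scheme} is a pure conservative balance of numerical fluxes and the nonconservative term never enters the positivity argument. Writing $F_{i+\frac{1}{2}}$ and $F_{i-\frac{1}{2}}$ for the first component of the ES flux \eqref{eq:definition_es_flux} at the two interfaces of cell $E_i$, namely $F = \avg{h_{m,\epsilon}v_m} - \frac{1}{2}|\lambda_{\max}|\jump{h_{m,\epsilon}}$, the forward Euler update of each layer height reads
\begin{equation}
	h_{m,i}^{n+1} = h_{m,i}^n - \frac{\Delta t}{\Delta x}\left(F_{i+\frac{1}{2}} - F_{i-\frac{1}{2}}\right).
\end{equation}
Recall that the reconstruction \eqref{eq:reconstruction_conservative_variables} modifies only the height and leaves the velocity unchanged, so the velocity appearing in $F$ is the cell velocity of the corresponding side.

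Next I would expand this update and regroup the terms according to the three cells $E_{i-1}, E_i, E_{i+1}$ that contribute through the two interfaces. The neighbouring contributions appear multiplied by $h_{m,\epsilon}\ge 0$, which is guaranteed by the maximum in \eqref{eq:reconstruction_layerheight}, and by the coefficients $|\lambda_{\max}| - v_{m,i+1}$ and $|\lambda_{\max}| + v_{m,i-1}$, which are non-negative because the wave speed estimate \eqref{eq:lambda_max} dominates every layer velocity, $|\lambda_{\max}| \ge |v_m|$. Hence these contributions are non-negative and can be dropped in a lower bound. The remaining self-contribution of $E_i$ carries its two reconstructed interface heights with coefficients $\tfrac{\Delta t}{2\Delta x}(|\lambda_{\max}| \pm v_{m,i})\ge 0$; bounding these reconstructed heights by the cell value yields
\begin{equation}
	h_{m,i}^{n+1} \ge h_{m,i}^n\left(1 - \frac{\Delta t}{\Delta x}|\lambda_{\max}|\right) \ge 0,
\end{equation}
where non-negativity of the bracket is guaranteed by the stated CFL restriction, which in fact gives $\tfrac{\Delta t}{\Delta x}|\lambda_{\max}| \le \tfrac{1}{2}$ and hence a safe margin (the factor two being the standard robust choice, also reflecting the non-interaction of the waves emanating from the two interfaces of a cell).

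The step I expect to be the crux is establishing the reconstruction bound $0 \le h_{m,\epsilon} \le h_{m,i}$ for every layer, since this is precisely what allows the cell's own outflow to be controlled by $h_{m,i}^n$. For the bottom layer it follows from $b_\epsilon \ge b$, a consequence of $b_\epsilon = \min(H_1,\max(b^-,b^+))$ together with $H_1 \ge b$ and $\max(b^-,b^+)\ge b$, giving $h_{M,\epsilon} = \max(H_{M,i}-b_\epsilon,0)\le h_{M,i}$. For an interior layer one writes $h_{m,\epsilon} = \max(H_{m,i},b_\epsilon) - \max(H_{m+1,i},b_\epsilon)$ and checks the three orderings of $b_\epsilon$ relative to $H_{m+1,i}\le H_{m,i}$, in each of which $h_{m,\epsilon}\le h_{m,i}$; the maximum structure is exactly the mechanism that caps the reconstructed layer heights. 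A secondary point to verify is that the single global $\lambda_{\max}$ in \eqref{eq:lambda_max} bounds each individual $|v_m|$, which is immediate from its definition. Positivity then holds layer by layer and entirely independently of the momentum update, so the nonconservative contribution plays no role.
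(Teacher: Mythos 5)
Your proposal is correct and, at its core, follows the same route as the paper: the nonconservative term has a zero first component, so positivity reduces to the mass equations, where the first component of the ES flux is a Rusanov/LLF flux acting on the reconstructed heights, and positivity follows under the stated CFL condition. The difference is one of completeness rather than strategy. The paper's proof is two sentences long: it asserts $h_{m,\epsilon}\geq 0$ and then cites \cite{dong2023surface} for the LLF positivity result. You instead carry out the monotone-scheme computation explicitly and, crucially, identify and prove the two-sided bound $0 \leq h_{m,\epsilon} \leq h_{m,i}$ for the novel reconstruction (via $b_{\epsilon}\geq b$ and the case analysis of $b_{\epsilon}$ against $H_{m+1,i}\leq H_{m,i}$). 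That upper bound is precisely what allows the cell's outflow to be controlled by its own content; it is the hypothesis under which the cited LLF result applies, and it is nowhere stated in the paper's proof, which only mentions non-negativity. In that sense your version closes a small but genuine gap in rigor, and does so for the multilayer case where the bound is not entirely obvious. One caveat you share with the paper: the domination $|\lambda_{\max}|\geq |v_m^{\pm}|$ is only immediate from \eqref{eq:lambda_max} if the velocity maxima appearing there are read as absolute values; the paper makes the same implicit assumption in its entropy stability argument, so this is not a defect of your proof relative to the paper's.
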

\begin{proof}
	The hydrostatic reconstruction of the water height \eqref{eq:reconstruction_waterheight} ensures that $h_{m,\epsilon}\geq 0$.
	In the mass equations the ES flux then recovers a standard LLF scheme, which has been proven to be positivity preserving under the given CFL condition in \cite{dong2023surface}. \qed 
\end{proof}
\begin{remark}\label{remark:SSPRK}
	The positivity result carries over to higher-order time integration using strong-stability-preserving Runge-Kutta (SSPRK) methods, which are a convex combination of the forward Euler method \cite{gottlieb2009high}. 
\end{remark}

\section{High-order reconstruction method}\label{sec:DG_method}
In this section we extend the low-order hydrostatic reconstruction scheme to high-order and two-dimensional curvilinear meshes by means of an entropy stable nodal discontinuous Galerkin spectral element method for nonconservative systems as introduced in \cite{ersing2024entropy}. 
The method builds upon a standard nodal collocation DGSEM formulation \cite{hesthaven2007nodal,kopriva2009implementing}, with additional nonconservative terms that are approximated in a path-conservative way, similar to the methods developed in \cite{franquet2012runge, renac2019entropy}. 
To achieve entropy stability, the summation-by-parts property of the method is used to rewrite divergence terms into a flux differencing formulation, with entropy conserving two-point fluxes \cite{carpenter2014entropy, gassner2016split}.
This approach was successfully applied to design well-balanced and entropy stable DGSEM for the SWE \cite{wintermeyer2017entropy} and two-layer shallow water equations \cite{ersing2024entropy}.

We further extend the method from \cite{ersing2024entropy} with shock-capturing and a positivity-limiter and use finite volume subcells in combination with the novel hydrostatic reconstruction to handle wet/dry transitions. Finally, we will demonstrate that the resulting approximation is positivity preserving and remains entropy stable and well-balanced, even for wet/dry transitions. Within this section, we restrict the discussion to the more general case of the ML-SWE, noting that all results carry over to the SWE when $M=1$.

First, to extend the balance law \eqref{eq:balance_law_continuous} to two spatial dimensions, we introduce block-vector notation as defined in \cite{winters2021construction} and obtain the reformulated balance law
\begin{equation}
	\statevec{u}_t + \spacevec{\nabla}_x \cdot \bigstatevec{f}(\statevec{u}) + \statevecGreek{\phi}(\statevec{u})\circ\left(\spacevec{\nabla}_x\cdot\bigstatevec{r}(\statevec{u})\right) = 0,
	\label{eq:balance_law_continuous_2d}
\end{equation}
with block vectors containing fluxes in both spatial dimensions
\begin{equation}
	\bigstatevec{f} = 
	\begin{pmatrix} \statevec{f}^{1} \\ \statevec{f}^2 \end{pmatrix}
	\quad 
	\bigstatevec{r}  = 
	\begin{pmatrix} \statevec{r}^1 \\ \statevec{r}^2\end{pmatrix},
\end{equation}
where $\statevec{f}^i = ({\statevec{f}^i_1}^T, ..., {\statevec{f}^i_M}^T)^T$ and $\statevec{r}^i = ({\statevec{r}_1^i}^T, ..., {\statevec{r}_M^i}^T)^T$ for $i=1,2$ with the layer-wise terms given by
\begin{equation}
	\begin{aligned}
	&\statevec{f}^1_m = \begin{pmatrix} h_mv_m \\ h_mv_m^2 \\ h_mv_mw_m\end{pmatrix},
	\quad
	\statevec{r}^1_m = \begin{pmatrix} 0 \\ b + \sum\limits_{k\geq m}h_k + \sum\limits_{k<m}\sigma_{km}h_k \\ 0 \end{pmatrix},
	\quad
	\statevec{f}^2_m = \begin{pmatrix} h_mw_m \\ h_mv_mw_m \\ h_mw_m^2\end{pmatrix},
	\quad
	\statevec{r}^2_m = \begin{pmatrix} 0 \\ 0 \\ b + \sum\limits_{k\geq m}h_k + \sum\limits_{k<m}\sigma_{km}h_k \end{pmatrix}.
	\end{aligned}
	\label{eq:physical_fluxes_2d}
\end{equation}
We then formulate a split-form DGSEM with nonconservative terms as described in \cite{ersing2024entropy}. 
Therefore, the physical domain $\Omega$ is divided into $K$ non-overlapping unstructured quadrilateral elements, that are mapped from physical space $\spacevec{x} = (x,y)^T$ onto a reference element $E=[-1,1]^2$ in computational space $\spacevec{\xi}=(\xi,\eta)^T$.

We map the system of equations by transforming the divergence operator into computational space
\begin{equation}
	\spacevec{\nabla}_x \cdot \bigstatevec{f} = 
	\frac{1}{J} \spacevec{\nabla}_{\xi} \cdot \bigcontravec{f},
	\quad 
	\statevecGreek{\phi} \circ \left(\spacevec{\nabla}_{x} \cdot \bigstatevec{r}\,\right) =
	\frac{1}{J}
	\statevecGreek{\phi} \circ \left(\spacevec{\nabla}_{\xi} \cdot \bigcontravec{r}\,\right),
	\label{eq:transform_flux_div}
\end{equation}
with the Jacobian of the mapping $J$ and the contravariant fluxes and nonconservative terms 
\begin{equation}
	\bigcontravec{f} = 
		\begin{pmatrix}
			Ja_1^1\normalmatrix{I} & Ja_1^2\normalmatrix{I} \\
			Ja_2^1\normalmatrix{I} & Ja_2^2\normalmatrix{I}
		\end{pmatrix}^T
		\bigstatevec{f},
	\quad
	\bigcontravec{r} = 
		\begin{pmatrix}
			Ja_1^1\normalmatrix{I} & Ja_1^2\normalmatrix{I} \\
			Ja_2^1\normalmatrix{I} & Ja_2^2\normalmatrix{I}
		\end{pmatrix}^T
		\bigstatevec{r},
\end{equation}
that depend on contravariant basis vectors $\spacevec{a}^i,\; i=1,2$ as well as the identity matrix $\normalmatrix{I}$.
For free-stream preservation and entropy conservation it is essential that the mapping satisfies the metric identities in a discrete setting
\begin{equation}
	\frac{\partial}{\partial \xi}J\spacevec{a}^{\,1} + \frac{\partial}{\partial \eta} J\spacevec{a}^{\,2} = 0.
	\label{eq:metric_identity}
\end{equation}
This property is guaranteed for isoparametric boundaries \cite{kopriva2006metric} and is essential for the proofs of both entropy conservation and well-balancedness, presented in the latter parts of this section.

In each element we then approximate the solution
\begin{equation}
	\statevec{u} \approx \statevec{U} = \sum\limits_{i,j=0}^N \statevec{U}_{ij} \ell_i(\xi) \ell_j(\eta)
\end{equation}
with a local tensor-product basis using one-dimensional nodal Lagrange interpolating polynomials $\{l_k\}_{k=0}^N$ of degree $N$ on Legendre-Gauss-Lobatto (LGL) nodes $\{\xi\}_{k=0}^N$ and denote interpolated values at LGL nodes with capital letters.
Using the Lagrange basis functions, we define the derivative operator
\begin{equation}
	\mathcal{D}_{ij} \coloneqq \left.\frac{\partial\ell_j}{\partial\xi}\right|_{\xi=\xi_i}, \quad i,j=0,...,N,
	\label{eq:Differentiation_matrix}
\end{equation}
and apply LGL quadrature rules for integration with collocated interpolation and quadrature nodes, to obtain a diagonal mass matrix 
\begin{equation}  
	\mathcal{M} = \text{diag}(\omega_{\hspace{0.5pt}0},...,\omega_{N}),
	\label{eq:mass_matrix}
\end{equation}
where $\{\omega_i\}_{i=0}^N$ denote the corresponding LGL quadrature weights, see \cite{kopriva2009implementing} for details. 
This combination of interpolation and differentiation operators has the diagonal norm SBP property as shown in \cite{gassner2013skew}
\begin{equation}
	(\mathcal{M}\mathcal{D}) + (\mathcal{M}\mathcal{D})^T = \mathcal{Q} + \mathcal{Q}^T = \mathcal{B},
	\label{eq:SBP_integration_by_parts}
\end{equation}
with the SBP matrix $\mathcal{Q}$ and the boundary matrix $\mathcal{B}=\text{diag}(-1, 0,...,0,1)$, which is key to prove entropy conservation.

In order to derive a split-form DGSEM we then multiply \eqref{eq:balance_law_continuous_2d} with a test function $\statevecGreek{\varphi}$ and integrate over the domain $\Omega$ using LGL quadrature rules and the SBP property \eqref{eq:SBP_integration_by_parts}
\begin{equation}
	\begin{aligned}
		\iprodN{\mathbb{I}^N(J)\statevec{U}_t,\statevecGreek{\varphi}} + 
		&\int\limits_{\partial E,N} \statevecGreek{\varphi}^T \{ \statevec{F}_n^* - \statevec{F}_n\} \hat{s} \dS + \iprodN{\vec{\mathbb{D}}\cdot\bigcontravec{F}^{\#} ,\statevecGreek{\varphi}} 
		\\+
		&\int\limits_{\partial E,N} \statevecGreek{\varphi}^T \left(\statevecGreek{\Phi}\circ\bigstatevec{R}\right)^{\diamond}_n \hat{s} \,\text{d}S
		+
		\iprodN{\spacevec{\mathbb{D}}\cdot(\statevecGreek{\Phi}\circ\bigcontravec{R})^{\#},\statevecGreek{\varphi}} = 0.
	\end{aligned}
	\label{eq:split_formulation}
\end{equation}
Here $\mathbb{I}^N(J)$ denotes the interpolated Jacobian of the mapping, $\statevec{F}_n = \bigstatevec{F}\cdot\spacevec{n}$ and $\left(\statevec{\Phi} \circ \bigstatevec{R}\right)^{\diamond}_n = \left(\statevec{\Phi} \circ \bigstatevec{R}\right)^{\diamond}\cdot\spacevec{n}$ denote the surface normal fluxes and numerical nonconservative term, $\bigcontravec{F}^{\#}$ and $(\statevecGreek{\Phi}\circ\bigcontravec{R})^{\#}$ denote volume fluxes and $\hat{s}$ is the differential surface element.
Furthermore, we introduce the discrete inner product for functions $f$ and $g$
\begin{equation}
	\iprodN{f,g} = \sum\limits_{i,j=0}^N f_{ij}g_{ij}\omega_{ij},
\end{equation} \\
with quadrature weights $\omega_{ij} = \omega_{i}\omega_{j}$ and the split-form divergence operator $\spacevec{\mathbb{D}}$ (see e.g. \cite{winters2021construction, ersing2024entropy, gassner2016split} for complete details), which applied to the conservative fluxes and nonconservative terms yields 
\begin{equation}
	\begin{aligned}
		\vec{\mathbb{D}} \cdot  \bigcontravec{F}^{\#} 
		&=
		2\sum_{l=0}^{N} \mathcal{D}_{il} \left(\bigstatevec{F}^{\#}(\statevec{U}_{ij}, \statevec{U}_{lj}) \cdot \avg{J\vec{a}^{\,1}}_{(i,l)j} \right)
		 +
		2\sum_{l=0}^{N} \mathcal{D}_{jl}   \left(\bigstatevec{F}^{\#}(\statevec{U}_{ij}, \statevec{U}_{il}) \cdot \avg{J\vec{a}^{\,2}}_{i(j,l)} \right),
		\\
		\spacevec{\mathbb{D}}\cdot(\statevecGreek{\Phi}\circ\bigcontravec{R})^{\#} &= 
		2 \sum_{l=0}^{N} \mathcal{D}_{il}\left( (\statevecGreek{\Phi}\circ\bigstatevec{R})^{\#}(\statevec{U}_{ij},\statevec{U}_{lj}) \cdot \avg{J\vec{a}^{\,1}}_{(i,l)j} \right)
		+
		2 \sum_{l=0}^{N} \mathcal{D}_{jl}\left( (\statevecGreek{\Phi}\circ\bigstatevec{R})^{\#}(\statevec{U}_{ij},\statevec{U}_{il}) \cdot \avg{J\vec{a}^{\,2}}_{i(j,l)} \right),
		\label{eq:split_form_flux}
	\end{aligned}
\end{equation}
where the two-point volume fluxes $\bigstatevec{F}^{\#}$ and nonconservative terms $(\statevecGreek{\Phi}\circ\bigstatevec{R})^{\#}$ are evaluated at arithmetic mean values and jumps
\begin{equation}
	\begin{aligned}
	\avg{\cdot}_{(i,l)j} &= \frac{1}{2} \left(\left(\cdot\right)_{ij} + (\cdot)_{lj}\right), &\quad \avg{\cdot}_{i\left(j,l\right)} &= \frac{1}{2} \left(\left(\cdot\right)_{ij} + \left(\cdot\right)_{il}\right),
	\\
	\jump{\cdot}_{(i,l)j} &= \left(\left(\cdot\right)_{lj} - (\cdot)_{ij}\right), &\quad \jump{\cdot}_{i\left(j,l\right)} &= \left(\left(\cdot\right)_{il} - \left(\cdot\right)_{ij}\right).
	\end{aligned}
\end{equation}

The split-form DGSEM offers the flexibility of choosing different fluxes for the volume and surface.
In the following, we demonstrate that a suitable choice of numerical fluxes and nonconservative terms recovers a split-form DGSEM that satisfies both well-balancedness and entropy stability. We apply a two-dimensional version of the EC fluxes \eqref{eq:ec_flux} in the volume, while we use the novel hydrostatic reconstruction with ES fluxes \eqref{eq:definition_es_flux} on the surface and use the same numerical non-conservative term \eqref{eq:nonconservative_flux} for both volume and surface fluxes
\begin{equation}
	\begin{aligned}
		\iprodN{\mathbb{I}^N(J)\statevec{U}_t,\statevecGreek{\varphi}} + 
		&\int\limits_{\partial E,N} \statevecGreek{\varphi}^T \{ \statevec{F}_{n,\epsilon}^{ES} - \statevec{F}_{n}\} \hat{s} \dS + \iprodN{\vec{\mathbb{D}}\cdot\bigcontravec{F}^{EC} ,\statevecGreek{\varphi}} 
		\\+
		&\int\limits_{\partial E,N} \statevecGreek{\varphi}^T \left(\statevecGreek{\Phi}\circ\bigstatevec{R}\right)^{\diamond}_{n,\epsilon} \hat{s} \,\text{d}S
		+
		\iprodN{\spacevec{\mathbb{D}}\cdot(\statevecGreek{\Phi}\circ\bigcontravec{R})^{\diamond},\statevecGreek{\varphi}} = 0,
	\end{aligned}
	\label{eq:DG_scheme}
\end{equation}
where the two-dimensional fluxes and nonconservative terms are
\begin{equation}
	\begin{aligned}
	&\statevec{F}_{m}^{EC,1} = \begin{pmatrix}
		\avg{h_{m}{v_m}} \\
		\avg{h_{m}v_m}\avg{v_m}\\
		\avg{h_{m}v_m}\avg{w_m}
	\end{pmatrix}, \quad 
	\statevec{F}_{m}^{EC,2} = \begin{pmatrix}
		\avg{h_{m}{w_m}} \\
		\avg{h_{m}w_m}\avg{v_m}\\
		\avg{h_{m}w_m}\avg{w_m}
	\end{pmatrix},\\
	&\statevec{F}_{m,\epsilon}^{ES} = \statevec{F}_{m,\epsilon}^{EC} - \frac{|\lambda_{\max}|}{2}\jump{\statevec{U_{\epsilon}}},
	\quad
	\left(\statevecGreek{\Phi}\circ\bigstatevec{R}\,\right)^{\diamond} = \statevecGreek{\Phi}_{ij} \circ \jump{\bigstatevec{R}},
	\end{aligned}
	\label{eq:definition_2d_fluxes}
\end{equation}

\subsection{Entropy stability}
To show entropy stability for the split-form DGSEM, as in the FV case in Section \ref{sec:entropy_stability} we demonstrate that the scheme \eqref{eq:DG_scheme} satisfies the entropy inequality \eqref{eq:entropy_inequality} in a semi-discrete setting. 
Following the strategy from \cite{winters2021construction}, we first choose the test function $\statevecGreek{\varphi}=\statevec{W}$ to be the interpolated entropy variables, to contract the split-form DGSEM
\begin{equation}
	\begin{aligned}
		\iprodN{\mathbb{I}^N(J)\statevec{U}_t,\statevec{W}} + 
		&\int\limits_{\partial E,N} \statevec{W}^T \{ \statevec{F}_{n,\epsilon}^{ES} - \statevec{F}_{n}\} \hat{s} \dS + \iprodN{\vec{\mathbb{D}}\cdot\bigcontravec{F}^{EC} ,\statevec{W}} 
		\\+
		&\int\limits_{\partial E,N} \statevec{W}^T \left(\statevecGreek{\Phi}\circ\bigstatevec{R}\right)^{\diamond}_{n,\epsilon} \hat{s} \,\text{d}S
		+
		\iprodN{\spacevec{\mathbb{D}}\cdot(\statevecGreek{\Phi}\circ\bigcontravec{R})^{\diamond},\statevec{W}} = 0.
	\end{aligned}
	\label{eq:DG_scheme_entropy_space}
\end{equation}

As we are interested in the semi-discrete analysis, we assume time continuity and apply the chain rule such that the time derivative term in \eqref{eq:DG_scheme_entropy_space} contracts to the temporal rate of entropy change
\begin{equation}
	\iprodN{\mathbb{I}^N(J)\statevec{U}_t,\statevec{W}}
	=
	\sum\limits_{i,j=0}^{N} J_{ij}\omega_{ij}\statevec{W}_{ij}^T\frac{\operatorname{d}\statevec{U}_{ij}}{\operatorname{dt}}=
	\sum\limits_{i,j=0}^{N} J_{ij}\omega_{ij}\frac{\operatorname{d}S_{ij}}{\operatorname{dt}}
	= 
	\iprodN{\mathbb{I}^N(J)S_t,1}.
	\label{eq:discrete_entropy_analysis_time_derivative}
\end{equation}

We then sum \eqref{eq:discrete_entropy_analysis_time_derivative} over all elements $k=1,..,K$ in the mesh to obtain the total entropy change 
\begin{equation}
	\bar{S}_t = \sum_{k=1}^K \iprodN{\mathbb{I}^N(J)^kS_t^k,1}.
	\label{eq:total_discrete_entropy}
\end{equation}

Hence, summing \eqref{eq:DG_scheme_entropy_space} over all elements yields the total discrete entropy change of the split-form DGSEM and entropy stability follows for $\bar{S}_t \leq 0$. 
\begin{lemma}[Entropy-stability of the split-form DGSEM]
	The curvilinear split-form DGSEM
	\begin{equation}
		\begin{aligned}
			\iprodN{\mathbb{I}^N(J)\statevec{U}_t,\statevecGreek{\varphi}} + 
			&\int\limits_{\partial E,N} \statevecGreek{\varphi}^T \{ \statevec{F}_{n,\epsilon}^{ES} - \statevec{F}_{n}\} \hat{s} \dS + 	\iprodN{\vec{\mathbb{D}}\cdot\bigcontravec{F}^{EC} ,\statevecGreek{\varphi}} 
			\\+
			&\int\limits_{\partial E,N} \statevecGreek{\varphi}^T \left(\statevecGreek{\Phi}\circ\bigstatevec{R}\right)^{\diamond}_{n,\epsilon} \hat{s} 	\,\text{d}S
			+
			\iprodN{\spacevec{\mathbb{D}}\cdot(\statevecGreek{\Phi}\circ\bigcontravec{R})^{\diamond},\statevecGreek{\varphi}} = 0.
		\end{aligned}
		\label{eq:DG_scheme_proof_es}
	\end{equation}
	with the fluxes and nonconservative terms defined in \eqref{eq:definition_2d_fluxes} is entropy stable for the ML-SWE \eqref{eq:system_multilayer}.
\end{lemma}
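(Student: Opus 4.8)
The plan is to mimic the semi-discrete finite volume analysis of Section~\ref{sec:entropy_stability}, now at the level of the split-form DGSEM, and to reduce the total entropy production to a sum of interface contributions that are already known to be non-positive. Starting from the contracted scheme \eqref{eq:DG_scheme_entropy_space} with $\statevecGreek{\varphi}=\statevec{W}$, I would first invoke \eqref{eq:discrete_entropy_analysis_time_derivative} so that the time term becomes $\iprodN{\mathbb{I}^N(J)S_t,1}$, and then treat the two volume terms $\iprodN{\vec{\mathbb{D}}\cdot\bigcontravec{F}^{EC},\statevec{W}}$ and $\iprodN{\spacevec{\mathbb{D}}\cdot(\statevecGreek{\Phi}\circ\bigcontravec{R})^{\diamond},\statevec{W}}$ jointly. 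The key local fact is that the two-point volume flux $\bigstatevec{F}^{EC}$ together with the nonconservative term $(\statevecGreek{\Phi}\circ\bigstatevec{R})^{\diamond}$ satisfies a discrete entropy conservation condition analogous to \eqref{eq:entropy_conservation_condition}. As noted in the remark following Lemma~\ref{lemma:entropy_contribution_MLSWE}, the EC flux is genuinely entropy conservative when no reconstruction is applied, so this condition holds exactly in the volume where states are not reconstructed.

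Second, I would use this two-point entropy conservation property together with the diagonal-norm SBP property \eqref{eq:SBP_integration_by_parts} to telescope the volume contributions. Following the block-vector manipulations of \cite{winters2021construction}, the flux-differencing volume terms collapse so that all interior node couplings cancel in pairs and only the boundary nodes survive; the discrete metric identities \eqref{eq:metric_identity} guarantee that the contravariant geometric factors telescope consistently (free-stream preservation), so that no spurious volume entropy is produced. Using the entropy flux potential relation \eqref{eq:entropy_flux_potential}, the surviving boundary nodes assemble into a discrete surface entropy flux, which combines with the consistent surface term $\statevec{W}^T\statevec{F}_n$ appearing in \eqref{eq:DG_scheme_entropy_space}.

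Third, after this collapse the total entropy change $\bar{S}_t$ is reduced to a sum over element interfaces involving only the surface numerical flux $\statevec{F}^{ES}_{n,\epsilon}$ and the surface nonconservative term $(\statevecGreek{\Phi}\circ\bigstatevec{R})^{\diamond}_{n,\epsilon}$. Summing the contributions from the two elements adjacent to a shared interface and using $\spacevec{n}^-=-\spacevec{n}^+$ produces, at each coincident surface node, precisely the scalar expression $\jump{\statevec{w}}^T\statevec{f}^{ES}_{\epsilon} - \avg{\statevec{w}\circ\statevecGreek{\phi}_{\epsilon}}^T\jump{\statevec{r}_{\epsilon}}$ that was analyzed in the finite volume setting, since the DGSEM surface nodes are treated with exactly the same reconstructed ES flux and nonconservative term. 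By the entropy stability lemma for the ML-SWE already established in Section~\ref{sec:entropy_stability}, each such nodal interface contribution is non-positive; assuming physical domain boundaries are closed with periodic or entropy-stable boundary conditions, it then follows that $\bar{S}_t\leq0$, which is the claimed entropy inequality \eqref{eq:entropy_inequality} in the semi-discrete DGSEM setting.

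The hard part will be the second step: carefully showing that the coupled conservative and nonconservative volume terms telescope to a clean surface entropy flux without leaving residual interior terms. This hinges on the interplay between the two-point entropy conservation condition of the EC flux and nonconservative pair, the SBP structure \eqref{eq:SBP_integration_by_parts}, and the discrete metric identities \eqref{eq:metric_identity}; any mismatch there would spoil free-stream preservation and introduce spurious entropy production in the volume. Once the volume terms are shown to conserve entropy, the remaining surface analysis is a direct reuse of the finite volume result, because the interface treatment in \eqref{eq:definition_2d_fluxes} is identical to that used in the low-order scheme.
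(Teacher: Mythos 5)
Your proposal is correct and follows essentially the same route as the paper: contract with the interpolated entropy variables, use the SBP property \eqref{eq:SBP_integration_by_parts}, the (unreconstructed) entropy conservation condition, and the metric identities \eqref{eq:metric_identity} to collapse the volume terms to a surface entropy flux, cancel it against $\statevec{W}^T\statevec{F}_n$ via \eqref{eq:entropy_flux_potential}, and then bound the remaining interface terms by the finite volume ML-SWE entropy stability lemma under periodic boundaries. The only difference is that what you flag as the hard step (the volume telescoping) the paper does not re-derive but imports wholesale as Lemma~1 of \cite{ersing2024entropy}, so your plan to reproduce it via the block-vector manipulations of \cite{winters2021construction} is sound but more work than the paper actually does.
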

\begin{proof}
	To show entropy stability we contract \eqref{eq:DG_scheme_proof_es} with entropy variables $\statevec{W}$ and first examine the volume integrals, assuming that the metric identities are satisfied \eqref{eq:metric_identity}. 
	Then we invoke the SBP property \eqref{eq:SBP_integration_by_parts} and entropy conservation condition \eqref{eq:entropy_conservation_condition} to apply Lemma 1 from \cite{ersing2024entropy}, which concludes that the volume contributions of the curvilinear split-form DGSEM with EC fluxes become the entropy flux evaluated at interfaces, when contracted into entropy space
	\begin{equation}
		\begin{aligned}
			\iprodN{\vec{\mathbb{D}}\cdot\bigcontravec{F}^{EC} ,\statevec{W}} + \iprodN{\spacevec{\mathbb{D}}\cdot(\statevecGreek{\Phi}\circ\bigcontravec{R})^{\diamond},\statevec{W}} = 		\int\limits_{\partial E,N} \left(\spacevec{F}^{\,S}\cdot\spacevec{n}\right) \hat{s} \dS.
		\end{aligned}
	\end{equation}
	Accordingly, the entropy balance in each element is governed solely by surface contributions.
	To evaluate the surface contributions we assume periodic boundaries and sum over all elements in the mesh $k=1,...K$, to obtain an expression for the total discrete entropy change \eqref{eq:total_discrete_entropy}
	\begin{equation}
			\bar{S}_t = -\sum_{k=1}^K\int\limits_{\partial E_k,N} \left(\statevec{W}^T \{ \statevec{F}_{n,\epsilon}^{ES} - \statevec{F}_n\} + \left(\spacevec{F}^{\,S}\cdot\spacevec{n}\right)
			+ \statevec{W}^T
			(\statevecGreek{\Phi}\circ\statevec{R})_{n,\epsilon}^{\diamond} 
			\right)
			\hat{s} \dS.  	
	\end{equation}
	Due to the nonconservative formulation of the pressure term, the physical flux contracts to the entropy flux $\statevec{w}^T\statevec{f} = {f}^S$ such that these terms cancel directly so that
	\begin{equation}
		\bar{S}_t =
		-\sum_{k=1}^K\int\limits_{\partial E_k,N} \left(\statevec{W}^T \statevec{F}_{n,\epsilon}^{ES}
		+ \statevec{W}^T
		(\statevecGreek{\Phi}\circ\statevec{R})_{n,\epsilon}^{\diamond} 
		\right)
		\hat{s} \dS.
	\end{equation}
	As our approximation is discontinuous, summing over the elements creates interface jumps and averages for all quantities besides the numerical surface fluxes, which are uniquely defined at the interface, to create the interface contributions
	\begin{equation}
			\bar{S}_t = 
			-\sum_{\text{faces}} \int\limits_{N}
			\left\{
			\jump{\statevec{W}}^T \bigstatevec{F}^{ES}_{\epsilon} 
			+
			\avg{\statevec{W}\circ\statevecGreek{\Phi}_{\epsilon}}^T\jump{\bigstatevec{R}_{\epsilon}}\right\}\cdot\spacevec{n}\hat{s}\dS.	
	\end{equation}
	Entropy stability then follows from Lemma 8, which shows that entropy is dissipated at each interface and accordingly $\bar{S}_t\leq 0$.
	\qed
\end{proof}

\subsection{Shock-capturing}\label{sec:shock_capturing}
Even though the entropy stable method has improved robustness in comparison to standard DGSEM, additional shock-capturing is necessary to contain strong oscillations near discontinuities. This is especially important near wet/dry transitions as oscillations can lead to unphysical solutions with negative layer heights, that eventually terminate the computation. As a remedy, we apply the entropy stable subcell shock capturing method from Hennemann et al. \cite{hennemann2021provably}. 
This strategy introduces an element-wise blending at each node between the split-form DGSEM update $\dot{\statevec{U}}^{DG}_{ij}$ from \eqref{eq:DG_scheme} and a compatible FV update $\dot{\statevec{U}}^{FV}_{ij}$ from a mapped version of \eqref{eq:hydrostatic_reconstruction_scheme} on a sub-cell FV grid to obtain the hybrid scheme
\begin{equation}
	\dot{\statevec{U}}_{ij} = (1 - \alpha) \dot{\statevec{U}}^{DG}_{ij} + \alpha \dot{\statevec{U}}^{FV}_{ij},
\end{equation}
with a blending function $\alpha$ that determines a blending between the high- and low-order method, where $\alpha=0$ corresponds to a pure DG method and $\alpha=1$ to pure FV.
The blending function $\alpha$ is determined by a shock indicator that evaluates the modal energy of a suitable quantity chosen as $\sum_{m=1}^M \frac{1}{2}gh_m^3$, to determine under resolved elements. After an initial computation the blending function $\alpha$ is first clipped to maximum value $\alpha_{max}=0.5$ and then a smoothing 
\begin{equation}
\alpha^{final} = \underset{E}{\max}\{\alpha, 0.5\alpha_E\}
\end{equation}
over neighboring elements $E$ is applied to improve accuracy.
Complete details on the construction of the compatible subcell FV scheme and proofs concerning entropy stability and conservation are provided in \cite{hennemann2021provably}.

\subsection{Positivity-preservation}\label{sec:positivity_limiter}
To apply our method in the presence of wet/dry transitions, additional measures are necessary to ensure non-negative water heights. 
Therefore, we employ a positivity-preserving limiter following the strategy in \cite{xing2013positivity}.
The strategy to ensure positivity can be split into two distinct steps.
First, one uses the conservation properties of the split-form DGSEM in combination with a positivity-preserving numerical flux to ensure non-negative cell averages for the water heights $({\bar{h}_m})_{E}$ under a specific time step restriction using a simple forward Euler method in time. 
The requirement of a positivity-preserving numerical flux is satisfied as the numerical flux in the mass equations corresponds to the standard LLF flux. 
For the present discretization on curvilinear quadrilateral elements and LLF flux positivity was shown in \cite{bonev2018discontinuous} under the assumption of exact integration for the time step restriction
\begin{equation}
	\Delta t \leq \frac{\omega_1}{2|\lambda|_{max}}\frac{J_{E}}{J_{\partial E}},
	\label{eq:time_step_positivity}
\end{equation}
where $\omega_1$ is the first LGL quadrature weight and $J_{\partial E}$ the surface Jacobian.

Even though this ensures positivity of the cell average $(\bar{h}_m)_{E}$, the polynomial solution $h_m(\spacevec{x})$ may still be negative. 
In these cases a linear scaling limiter around the cell average ${\bar{h}_m}_{E}$ is applied to the polynomial $h_m(\spacevec{x})$ to ensure positivity. Therefore, at each node the water height $(h_m)_{ij}$ is replaced with a limited version
\begin{equation}
	(\tilde{h}_m)_{ij} = \theta_E \left( (h_m)_{ij} - (\bar{h}_m)_E \right) + (\bar{h}_m)_E,
\end{equation}
with the parameter $\theta_{m,E} \in [0,1]$ given by
\begin{equation}
	\theta_{m,E} = \min\left\{ 1, \frac{(\bar{h}_m)_E}{(\bar{h}_m)_E - \min((h_m)_{ij})} \right\},
\end{equation}
where for the ML-SWE, we apply the limiter separately for each layer $m$.

The positivity-limiter maintains both conservation and high-order accuracy \cite{xing2013positivity} and was shown to preserve entropy inequalities in \cite{wintermeyer2018entropy, ranocha2017shallow}. Analogous to Remark \ref{remark:SSPRK}, the limiter can be extended to SSPRK methods as these are a convex combination of forward Euler time steps.

\subsection{Well-balancedness}\label{sec:DG_well_balanced}
Another important aspect for the numerical treatment of shallow water flows is the preservation of steady state solutions, such as the lake-at-rest condition \eqref{eq:lake-at-rest_multilayer}.
Specifically for DG methods one needs to ensure both well-balancedness in the volume and surface integrals to preserve such steady state solutions.
A common issue in these methods is that the volume integral typically relies on the fact that the hydrostatic pressure remains constant.
Under wet/dry transitions this is not satisfied by the polynomial approximation and, without further treatment, well-balancedness is only achieved if the transition point is aligned with element interfaces, see, e.g., \cite{chan2022entropy, wintermeyer2018entropy, fu2022high}.
A remedy to this issue was presented in \cite{bonev2018discontinuous}, where the authors resort to a first order volume discretization in partially dry elements.

We adopt a similar approach and combine the positivity-preserving limiter, introduced in Section \ref{sec:positivity_limiter}, with the low-order hydrostatic reconstruction scheme \eqref{eq:hydrostatic_reconstruction_scheme} in the presence of partially dry elements to ensure positivity and well-balancedness for arbitrary transition locations.
To achieve this we modify the shock-capturing indicator and set it to $\alpha=1$, in order to switch to a pure FV method in partially dry elements. 
We employ a simple threshold and determine elements to be partially dry if the water height is below a given threshold $\tau_{wet}$.

Even though well-balancedness only requires the FV method at dry nodes, switching to FV for small water heights has shown improved robustness for the scheme.
While this incurs some accuracy degradation, the total influence is considered low for reasonably small water heights as the switch to low-order remains localized to the wet/dry front.

In the following we demonstrate that this approach yields a well-balanced formulation in both wet and dry domains. 
Therefore, we first prove that the volume integral in \eqref{eq:DG_scheme} is well-balanced on fully wet elements, before we conclude to prove well-balancedness in case of dry elements. 
\begin{lemma}[Well-balancedness of the curvilinear split-form DGSEM volume integral] \label{lemma:wb_volume_integral}
	The volume integral of the curvilinear split-form DGSEM \eqref{eq:DG_scheme}
	\begin{equation}
		\begin{aligned}
			\iprodN{\vec{\mathbb{D}}\cdot\bigcontravec{F}^{EC} ,\statevecGreek{\varphi}} 
			+
			\iprodN{\spacevec{\mathbb{D}}\cdot(\statevecGreek{\Phi}\circ\bigcontravec{R})^{\diamond},\statevecGreek{\varphi}}
		\end{aligned}
	\end{equation}
	satisfies the lake-at-rest condition \eqref{eq:lake-at-rest_multilayer} on fully wet domains.
\end{lemma}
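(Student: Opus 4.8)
The plan is to evaluate the volume operator $\vec{\mathbb{D}}\cdot\bigcontravec{F}^{EC} + \spacevec{\mathbb{D}}\cdot(\statevecGreek{\Phi}\circ\bigcontravec{R})^{\diamond}$ node-by-node under the lake-at-rest condition \eqref{eq:lake-at-rest_multilayer} and show it vanishes identically at every interior node, so that testing against any $\statevecGreek{\varphi}$ produces zero. Under lake-at-rest all velocities vanish, $v_m = w_m = 0$, so I would first observe that the EC volume fluxes collapse: from \eqref{eq:definition_2d_fluxes}, every component of $\statevec{F}^{EC,1}_m$ and $\statevec{F}^{EC,2}_m$ carries a factor $\avg{v_m}$ or $\avg{w_m}$ and therefore $\bigstatevec{F}^{\#}(\statevec{U}_{ij},\statevec{U}_{lj}) = \statevec{0}$ identically. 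Hence the entire conservative contribution $\vec{\mathbb{D}}\cdot\bigcontravec{F}^{EC}$ drops out and only the nonconservative term remains to be controlled.

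The core of the argument is therefore the nonconservative volume term. Writing out \eqref{eq:split_form_flux} with the two-point nonconservative term $(\statevecGreek{\Phi}\circ\bigstatevec{R})^{\#} = \statevecGreek{\Phi}_{ij}\circ\jump{\bigstatevec{R}}$ from \eqref{eq:definition_2d_fluxes}, the $\xi$-contribution at node $(i,j)$ reads
\begin{equation}
	2\sum_{l=0}^{N}\mathcal{D}_{il}\,\statevecGreek{\Phi}_{ij}\circ\left(\jump{\bigstatevec{R}}_{(i,l)j}\cdot\avg{J\vec{a}^{\,1}}_{(i,l)j}\right),
\end{equation}
and similarly in $\eta$. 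Since $\statevecGreek{\Phi}_{ij}$ factors out of the sum over $l$, the whole expression vanishes provided the bracketed sum vanishes for each layer. Here I would use that under lake-at-rest the quantity $\statevec{r}_m$ — the second component of $\bigstatevec{R}_m$, namely $b+\sum_{k\geq m}h_k+\sum_{k<m}\sigma_{km}h_k$ — is \emph{constant} across the element by \eqref{eq:lake-at-rest_multilayer}, so its interpolant is a constant polynomial and $\jump{\statevec{R}_m}_{(i,l)j}$ is independent of $l$. The sum then reduces to a constant multiplying $2\sum_l \mathcal{D}_{il}\avg{J\vec{a}^{\,1}}_{(i,l)j}$ in each direction, and combining both directions the metric identities \eqref{eq:metric_identity} force the total to zero, exactly as in the standard free-stream-preservation argument.

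The main obstacle is handling the split-form two-point structure carefully: the averaged metric terms $\avg{J\vec{a}}$ couple the node with its neighbors, so I must verify that a discretely constant $\statevec{r}_m$ lets the metric sum collapse to the discrete divergence of the metric terms, which is precisely what \eqref{eq:metric_identity} annihilates. I would make this explicit by pulling the constant $\jump{\statevec{R}_m}$ out of the $l$-sum and recognizing the remaining factor as $\sum_l \mathcal{D}_{il}\avg{J\vec{a}^{\,1}}_{(i,l)j} + \sum_l \mathcal{D}_{jl}\avg{J\vec{a}^{\,2}}_{i(j,l)}$, the discrete metric identity evaluated at node $(i,j)$. The subtlety worth flagging is that this step relies on the pressure variable $\statevec{r}_m$ being exactly constant as a polynomial, which holds on \emph{fully wet} elements under lake-at-rest but would fail across wet/dry fronts — consistent with the lemma's restriction to fully wet domains and motivating the separate FV treatment of partially dry elements discussed in Section~\ref{sec:DG_well_balanced}.
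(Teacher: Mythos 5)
Your proposal is correct and rests on the same two pillars as the paper's proof: vanishing velocities annihilate the EC volume fluxes, and the constancy of $r_m = b+\sum_{k\geq m}h_k+\sum_{k<m}\sigma_{km}h_k$ on fully wet lake-at-rest elements, together with the discrete metric identities \eqref{eq:metric_identity}, annihilates the nonconservative volume term. The difference is only in the bookkeeping of the nonconservative term. The paper first splits $\jump{\bigstatevec{R}}_{(i,l)j}=\bigstatevec{R}_{lj}-\bigstatevec{R}_{ij}$ and invokes the metric identities once to discard the local contributions $\statevecGreek{\Phi}_{ij}\circ\bigstatevec{R}_{ij}$ (an identity that holds for arbitrary states), and then a second time after pulling the constant $r_m$ out of the remaining nonlocal sum; you instead keep the jump intact and apply the metric identities once. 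Your argument can in fact be sharpened: because the nodal values of $r_m$ all coincide on a fully wet lake-at-rest element, $\jump{\bigstatevec{R}}_{(i,l)j}$ is not merely independent of $l$ but identically zero (set $l=i$ in the jump), so every two-point term $\statevecGreek{\Phi}_{ij}\circ\jump{\bigstatevec{R}}$ vanishes pointwise and the nonconservative volume term is zero before the metric identities ever enter — in your final step they multiply a zero constant. Both routes are valid; yours is marginally more economical under the lemma's hypotheses, while the paper's local/nonlocal splitting isolates a reusable identity (the cancellation of purely local terms) that does not depend on the lake-at-rest assumption and also serves its entropy analysis.
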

\begin{proof}
	Well-balancedness in the mass equation, follows directly from the initial condition $v_m, w_m = 0$, as all products with the velocity vanish.
	Thus, we only need to examine the momentum equation.
	As the pressure contributions were moved to the nonconservative term, again the conservative volume integral vanishes from the initial condition $v_m, w_m = 0$.
	For the nonconservative volume integral we use that all terms with local contributions $(\statevecGreek{\Phi}_{ij}\circ\bigstatevec{R}_{ij})$ vanish in the volume integral if the metric identities \eqref{eq:metric_identity} are satisfied to obtain
	\begin{equation}
		\begin{aligned}
			\spacevec{\mathbb{D}}\cdot(\statevecGreek{\Phi}\circ\bigcontravec{R})^{\diamond} = 
			&\sum_{l=0}^{N} \mathcal{D}_{il} \left(\statevecGreek{\Phi}_{ij}\circ\left(\bigstatevec{R}_{lj} \cdot \avg{J\vec{a}^{\,1}}_{(i,l)j} \right)\right)
			+
			&\sum_{l=0}^{N} \mathcal{D}_{jl} \left(\statevecGreek{\Phi}_{ij}\circ\left(\bigstatevec{R}_{il} \cdot \avg{J\vec{a}^{\,2}}_{i(j,l)} \right)\right).
		\end{aligned}
	\end{equation}

	We then expand the nonconservative terms, for the $h_mv_m$-equation and
	use that $b + \sum\limits_{k\geq m} h_k + \sum_{k<m} \sigma_{km}h_k$ is constant in the wet domain. Then assuming that metric identities are satisfied we obtain
	\begin{equation}
		\begin{aligned}
			&\iprodN{\spacevec{\mathbb{D}}\cdot(\statevecGreek{\Phi}\circ\bigcontravec{R})^{\diamond},\statevecGreek{\varphi_{hv_m}}}
			\\&=
			g\sum\limits_{i,j=0}^N \omega_{ij} \left(h_m\right)_{ij}
			\left[\sum_{l=0}^{N} \mathcal{D}_{il} \left(\left(b + \sum\limits_{k\geq m} h_k + \sum_{k<m} \sigma_{km}h_k \right)_{lj} \avg{J\vec{a}_1^1}_{(i,l)j} \right) 
			\right.\\&\hspace{2.26cm}+\left.
			\sum_{l=0}^{N} \mathcal{D}_{jl}   \left(\left( b + \sum\limits_{k\geq m} h_k + \sum_{k<m} \sigma_{km}h_k \right)_{il} \avg{J\vec{a}_1^2}_{i(j,l)} \right)\right]
			\\&=
			g \left(b + \sum\limits_{k\geq m} h_k + \sum_{k<m} \sigma_{km}h_k \right) \sum\limits_{i,j=0}^N \omega_{ij} \left(h_m\right)_{ij}
			\left[\sum_{l=0}^{N} \mathcal{D}_{il} \left(\left(J\vec{a}_1^1\right)_{ij} +\left(J\vec{a}_1^1\right)_{lj} \right) 
			+
			\sum_{l=0}^{N} \mathcal{D}_{jl}   \left(\left(J\vec{a}_1^2\right)_{ij} +\left(J\vec{a}_1^2\right)_{il} \right)\right]
			\\&=
			g \left(b + \sum\limits_{k\geq m} h_k + \sum_{k<m} \sigma_{km}h_k \right) \sum\limits_{i,j=0}^N \omega_{ij} \left(h_m\right)_{ij}
			\left[\sum_{l=0}^{N} \mathcal{D}_{il} \left(J\vec{a}_1^1\right)_{lj}
			+
			\sum_{l=0}^{N} \mathcal{D}_{jl}   \left(J\vec{a}_1^2\right)_{il} \right] = 0,
		\end{aligned}
	\end{equation}
	which completes the proof. \qed
\end{proof}

With this result we are then prepared to extend the well-balancedness result to the general case of both wet or dry domains, by resorting to pure FV subcells in the presence of dry nodes.

\begin{lemma}[Well-balancedness of the curvilinear split-form DGSEM]
	The curvilinear split-form DGSEM \eqref{eq:DG_scheme} with the positivity-preserving limiter (Section \ref{sec:positivity_limiter}), shock-capturing (Section \ref{sec:shock_capturing}) and the well-balanced treatment (Section \ref{sec:DG_well_balanced}) preserves the lake-at-rest condition \eqref{eq:lake-at-rest_multilayer}.
\end{lemma}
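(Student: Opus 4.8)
The plan is to show that, starting from a lake-at-rest state, the semi-discrete update $\dot{\statevec{U}}_{ij}$ produced by the full blended scheme vanishes at every node for every element configuration. Since the update is the convex combination $\dot{\statevec{U}}_{ij} = (1-\alpha)\dot{\statevec{U}}^{DG}_{ij} + \alpha \dot{\statevec{U}}^{FV}_{ij}$, it suffices to establish that each constituent update vanishes in the regime where it is active: the DG update on fully wet elements, and the subcell FV update on partially dry (and dry) elements where the well-balanced treatment of Section \ref{sec:DG_well_balanced} forces $\alpha=1$. I would also first confirm that the positivity limiter of Section \ref{sec:positivity_limiter} acts as the identity on a lake-at-rest state, so that it perturbs neither the steady state nor the subsequent update.

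For the mass equations the result is immediate: every flux and volume term is advective and hence proportional to $v_m$ or $w_m$, which vanish under \eqref{eq:lake-at-rest_multilayer}. For the momentum equations on a fully wet element I would treat the volume and surface contributions of \eqref{eq:DG_scheme} separately. The volume contribution vanishes by Lemma \ref{lemma:wb_volume_integral}. For the surface, the physical boundary flux $\statevec{F}_n$ is purely advective and therefore zero, while the entropy stable surface flux $\statevec{F}^{ES}_{n,\epsilon}$ from \eqref{eq:definition_2d_fluxes} reduces to its entropy conservative part $\statevec{F}^{EC}_{n,\epsilon}$, which itself vanishes since $\avg{h_{m,\epsilon}v_m}=0$; the dissipation term is zero because the reconstructed states satisfy $\jump{\statevec{U}_{\epsilon}}=0$ at lake-at-rest. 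The remaining surface nonconservative term $(\statevecGreek{\Phi}\circ\bigstatevec{R})^{\diamond}_{n,\epsilon}$ vanishes by the same dichotomy used in Lemma \ref{lemma:wb_ml}: in each layer either the reconstruction yields a dry state with $\statevecGreek{\Phi}_{\epsilon}=0$, or the layer is wet with a continuous reconstruction so that $\jump{\bigstatevec{R}_{\epsilon}}=0$. Hence $\dot{\statevec{U}}^{DG}_{ij}=0$ on wet elements.

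On partially dry and dry elements the well-balanced treatment sets $\alpha=1$, so the update is exactly the mapped subcell version of the first-order scheme \eqref{eq:hydrostatic_reconstruction_scheme}. By Lemma \ref{lemma:wb_ml} this FV discretization preserves \eqref{eq:lake-at-rest_multilayer}, so $\dot{\statevec{U}}^{FV}_{ij}=0$ there. Because the subcell FV operator is well-balanced on \emph{all} configurations, the blended update vanishes for any admissible $\alpha$, including any $\alpha>0$ introduced into an otherwise wet element by the neighbor smoothing $\alpha^{final}=\max_E\{\alpha,0.5\alpha_E\}$; in that case both constituents vanish and so does their combination.

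Finally, I would verify that the positivity limiter does not disturb the steady state. A lake-at-rest state has non-negative layer heights at every node, so $\min_{ij}(h_m)_{ij}\geq 0$ and the limiter ratio $\bar{h}_m/(\bar{h}_m-\min_{ij}(h_m)_{ij})\geq 1$, forcing $\theta_{m,E}=1$ and leaving the nodal values unchanged. The main obstacle is the fully wet surface argument: one must ensure that the \emph{same} hydrostatic reconstruction governs the interface contribution of the DG scheme and the subcell FV scheme across every layer configuration (wet, dry, partially dry), so that the pressure balance encoded by the reconstruction is respected consistently under the blending. Once this consistency is in place, the remaining steps follow directly from Lemmas \ref{lemma:wb_volume_integral} and \ref{lemma:wb_ml} together with the convex-combination structure of the blended update.
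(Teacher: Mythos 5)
Your proposal is correct and follows essentially the same route as the paper's proof: a case split into wet versus (partially) dry elements, with the DG volume handled by Lemma \ref{lemma:wb_volume_integral}, the surface and subcell FV contributions handled by Lemma \ref{lemma:wb_ml}, and the convex-combination structure of the blending ensuring the hybrid update vanishes. Your explicit check that the positivity limiter acts as the identity at lake-at-rest is a small addition the paper leaves implicit, but it does not change the argument.
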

\begin{proof}
	To prove well-balancedness in the more general case, we consider wet and dry (or partially dry) domains separately. 
	
	If an element is partially dry, the well-balanced treatment introduced in Section \ref{sec:DG_well_balanced} switches locally to FV subcells. Well-balancedness for the FV scheme has been shown for both wet and dry domains in Lemma \ref{lemma:wb_ml}.
	
	In the wet domain, the shock-capturing may introduce a blending between the FV and DG solutions. Well-balancedness of the resulting hybrid-scheme follows if both contributions vanish independently under lake-at-rest conditions.
	For the FV scheme this follows again from Lemma \ref{lemma:wb_ml}. 
	In case of the DG scheme, both volume and surface contributions need to be considered to obtain well-balancedness.
	In wet domains well-balancedness in the volume integral was shown in Lemma \ref{lemma:wb_volume_integral}. 
	On the surface, the lake-at-rest condition \eqref{eq:lake-at-rest_multilayer} ensures that the physical flux contributions vanish with the velocity.
	The remaining surface numerical flux and nonconservative term are then identical to the FV scheme and well-balancedness follows again from Lemma \ref{lemma:wb_ml}.
	\qed
\end{proof}

\subsection{Numerical treatment for vanishing water heights}
While the positivity-limiter ensures positive water heights an additional treatment is necessary to avoid numerical issues for vanishing water heights and the presence of dry cells due to finite precision.
The main concern regarding numerical issues at dry nodes is the computation of the velocities, which are recovered from a division of conserved variables
\begin{equation}
	v_m = \frac{hv_m}{h_m}.
\end{equation}
In the presence of vanishing water heights this approaches a division by zero, with severe effects on robustness and numerical accuracy. 
To prevent these issues, we ensure the presence of a finite water height together with a desingularization formula for the velocity. 

For practical computations we initialize the solution with a minimum water height to avoid completely dry nodes, where solutions to \eqref{eq:system_multilayer} are not defined. Therefore, at each nodal value we apply the function
\begin{equation}
	h_m = \max\{h_m, 5\epsilon_{num}\}
	\label{eq:fix_water_level}
\end{equation}
where $\epsilon_{num}$ denotes machine epsilon using double precision and consider these nodes as dry.
To retain the minimum water level \eqref{eq:fix_water_level} is applied after application of the positivity limiter, to prevent draining of dry nodes. 
We deliberately chose the threshold as small as possible, since the well-balanced strategy introduced in Section \ref{sec:well-balancedness} relies on a vanishing water heights at wet/dry transitions. 
Numerically, this cannot be satisfied exactly, which introduces minor perturbations around machine precision from the lake-at-rest condition in configurations with a wet/dry transition. 
However, numerical results demonstrate that this has only negligible influence on the well-balanced property of the approximation.

While the latter procedure effectively prevents a singularity in the velocity computation, we find that additional treatment is necessary to ensure robustness of the velocity computation. We therefore either apply the desingularization formula from \cite{chertock2015well} to adjust the momentum at each node or set the momentum to zero at dry nodes 
\begin{equation}
	hv_m = 
	\begin{cases}
		\frac{2h_m^2hv_m}{h_m^2 + \max(h_m^2, \tau_{vel})} & \text{if } h_m > 5\epsilon_{num}\\
		0	& \text{else},
	\end{cases}
\end{equation}
where the threshold $\tau_{vel}$ is some small positive value, dependent on the problem setup. This procedure recovers the exact momentum, for $h_m^2 \geq \tau_{vel}$ and ensures a robust velocity computation and vanishing velocity at dry nodes.

\section{Results}\label{sec:results}
In the following, we present numerical results to demonstrate convergence, well-balancedness, pos\-i\-tiv\-i\-ty-pre\-serv\-a\-tion and entropy stability of the high-order reconstruction method with ES fluxes.
These numerical results were obtained using the open-source framework Trixi.jl \cite{ranocha2022adaptive,schlottkelakemper2021purely} and visualized with Makie.jl \cite{Makie2021}. A reproducibility repository for this section is available on Zenodo and GitHub \cite{ersing2024hrRepro}.
For the computations we use a four-stage, third-order SSPRK method with CFL-based time stepping.
We set the thresholds for partially-dry cells to $\tau_{wet}=10^{-4}$ and for desingularization to $\tau_{vel}=10^{-8}$, which gave good results in our testing.

	\subsection{Convergence}
		First, we demonstrate spectral convergence of our method by evaluating the error of our numerical approximation in the 	$L_2$-norm relative to an exact solution using the method of manufactured solutions.
		
		The manufactured solution for this three-layer system is constructed around trigonometric functions for layer heights and bottom topography and constant velocities
		\begin{equation}
			\begin{aligned}
				H_1 &= 4 + 0.1\cos\left(2\pi x+t \right) + 0.1\cos\left(2\pi y + t\right),
				\\
				H_2 &= 2 + 0.1\sin\left(2\pi x+t \right) + 0.1\sin\left(2\pi y + t\right),
				\\
				H_3 &= 1.5 + 0.1\cos\left(2\pi x+t \right) + 0.1\cos\left(2\pi y + t\right),
				\\
				b &= 1 + 0.1\cos\left(2\pi x \right) + 0.1\cos\left(2\pi y\right),
				\\
				v_m &= 0.8, \quad w_m = 1.0, \quad m=1,2,3,
			\end{aligned}
		\end{equation}  
		with gravity constant set to $g=1.1$ and layer densities $\rho_1 = 0.9$, $\rho_2 = 1.0$, $\rho_3= 1.1$.
		To obtain the manufactured solution, the respective source terms were then calculated with symbolic differentiation \cite{Symbolics.jl}.
		
		The problem is then computed on the $4 \times 4$ element curvilinear mesh shown in Figure~\ref{fig:WB_Curvilinear} with periodic boundary conditions, where edges are interpolated with polynomial degree $N=6$.
		To investigate spectral convergence we compute solutions up to polynomial degree $N=24$ at final time $t_{end}=0.1$ with fixed time step $\Delta t = 10^{-4}$.	
		In Figure~\ref{fig:Convergence_results} we present convergence results, by evaluating the $L_2$-Errors for $h_1$, $h_2$ and $h_3$. 
		For all quantities we observe spectral convergence up to degree $N=20$, with a characteristic odd-even pattern.
		
		\begin{figure}
			\centering
			\includegraphics[width=0.5\textwidth]{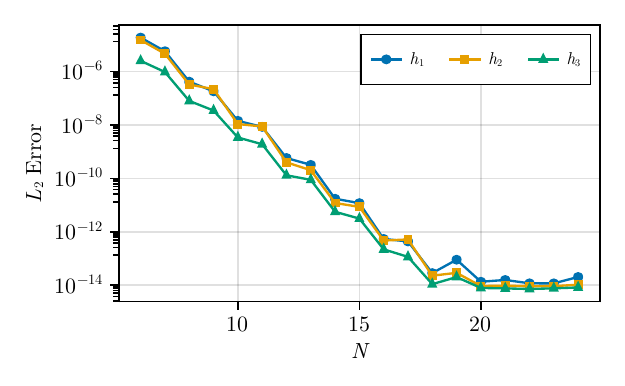}
			\caption{Spectral convergence results in space for the $L_2$-Error in quantities $h_1$, $h_2$ and $h_3$ over polynomial degree $N$ and $\Delta t=10^{-4}$}
			\label{fig:Convergence_results}
		\end{figure}
		
	\subsection{Well-balancedness}
		Next, we demonstrate the well-balancedness and entropy stability of our method for a two-layer system with complex bottom topography and wetting and drying.
		
		This test was introduced in \cite{martinez20201d} and applies a complex bottom topography to test well-balancedness under various wet/dry configurations.
		The initial condition together with a temporal evolution is shown in Figure~\ref{fig:wb_setup_2layer}.
		
		\begin{figure}[htb]
			\centering
			\includegraphics[width=0.8\textwidth]{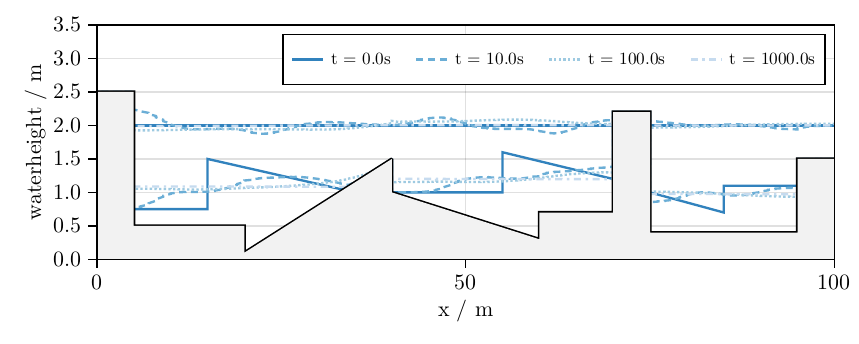}
			\caption{Initial condition and temporal evolution of the free surface for a two-layer system with perturbed lake-at-rest setup with $N=1$, $\text{CFL}=0.7$ on $100$ equidistant elements.}
			\label{fig:wb_setup_2layer}
		\end{figure}
		
		The test is setup with a density ratio $\sigma_{12} = \nicefrac{1}{3}$, gravity constant $g=1$, wall boundary conditions on both sides and a perturbation in the lower layer. 
		While the initial solution does not satisfy the lake-at-rest, the perturbation will be dissipated over time by numerical dissipation such that a well-balanced scheme should converge to steady state.
		Besides the steady state, we also use this test to demonstrate entropy stability as the complex bottom topography covers various wet/dry configurations.
		
		For this test we split the domain into $100$ elements and set a low polynomial degree $N=1$ in order to introduce enough dissipation to reach the steady state within a reasonable amount of time $t_{end}=12\,000$.
		
		In Figure~\ref{fig:time_series_wb_2l} we present time series for the lake-at-rest error and total entropy change.
		In addition to the perturbed initialization, we also show computed quantities for the respective steady state initial condition.
		
		For the steady state initialization, we observe that the lake-at-rest is preserved around machine precision.
		In the perturbed case, the initial lake-at-rest error shows an exponential decay as the perturbation gets dissipated over time.
		Finally, the lake-at-rest error converges at around $10^{-12}$ at time $t=10\,000$, which demonstrates that the method remains well-balanced even after considerable time.
		On the other hand, it also shows that well-balancedness is not satisfied exactly, but we see a small deviation from the expected lake-at-rest. 
		A possible reason for this is that the well-balanced proofs assume zero water heights at dry interfaces, which we only satisfy asymptotically in the discrete case.
		However, since deviations remain reasonably small even for considerable time, we consider the effect to be negligible.
		
		Besides the lake-at-rest we also evaluate the total entropy change within each time step. 
		As shown in Figure~\ref{fig:time_series_wb_2l} for both steady state and perturbed initializations entropy is dissipated in each time step, which demonstrates the entropy stability of our method.
		\begin{figure}[htb]
		\centering
		\includegraphics[width=\textwidth]{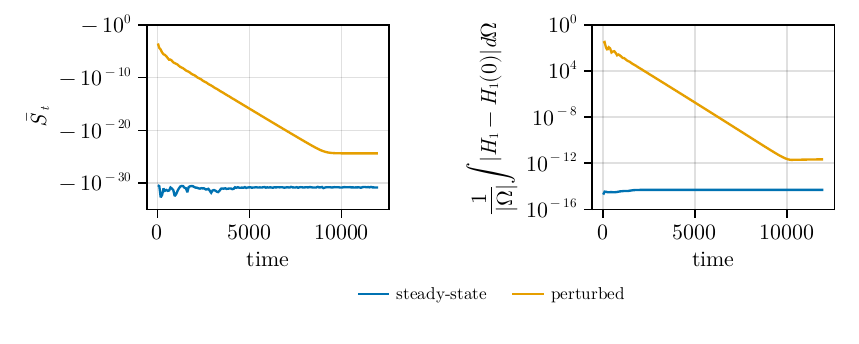}
		\caption{Semi-log plots for total entropy change and lake-at-rest error in $H_1$ over time for the perturbed and steady state lake-at-rest initial condition with $N=1$, $\text{CFL}=0.7$ on $100$ equidistant elements.}
		\label{fig:time_series_wb_2l}
		\end{figure}
		
	\subsection{Well-balancedness (curvilinear)}
		In order to show that well-balancedness is still satisfied in the curvilinear case, we present a second well-balanced test in two-dimensions.
		For this test we initialize the following lake-at-rest setup for a three-layer system with discontinuous bottom 	topography and dry states in each layer
		\begin{equation}
			\begin{aligned}
			H_1 = 1.5, \quad H_2 = 1.0, \quad H_3 = 0.5, \quad
			v_m = 0, \quad w_m = 0,  \qquad m=1,2,3
			\\
			b_0 = 0.2 + 0.1\sin\left({2\pi x}\right) + 0.1\cos\left({2\pi y}\right), \quad 
			b = 
			\begin{cases}
				b_0 + 0.1, & \text{for element } 3 \times 3\\
				b_0 + 0.5, & \text{for element } 2 \times 3\\
				b_0 + 1.0, & \text{for element } 2 \times 2\\
				b_0 + 1.5, & \text{for element } 3 \times 2\\
				b_0, & \text{else}.
			\end{cases}
			\end{aligned}
		\end{equation}
	
		The curvilinear mesh with the bottom topographyis depicted in Figure~\ref{fig:WB_Curvilinear}.
		In Table~\ref{tab:curvilinear_lake_at_rest_errors} we present computed lake-at-rest errors for all layers at final time $t_{end}=200$ using polynomial degree $N=6$. 
		The results show that the lake-at-rest errors remain around machine precision for all layers, which demonstrates that the well-balanced property holds even in the curvilinear case.
		
		\begin{figure}[htb]
			\centering
			\includegraphics[width=0.5\textwidth]{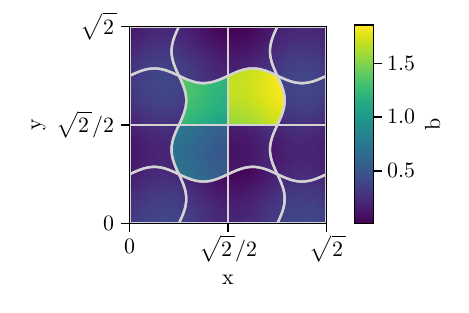}
			\caption{Contour plot of the bottom topography on the curvilinear mesh for the three-layer lake-at-rest setup.}
			\label{fig:WB_Curvilinear}
		\end{figure}
		\begin{table}
			\caption{Lake-at-rest errors for the three-layer curvilinear lake-at-rest setup at final time $t_{end}=200$, obtained with $N=6$ and $\text{CFL}=1$.}
			\label{tab:curvilinear_lake_at_rest_errors}
			\vspace{1mm}
			\centering
			\begin{tabular}{c|c|c}
			$\frac{1}{|\Omega|}\int H_1 - H_1(0)\,d\Omega$ & $\frac{1}{|\Omega|}\int H_2 - H_2(0)\,d\Omega$ & $\frac{1}{|\Omega|}\int H_3 - H_3(0)\,d\Omega$ \\[0.5ex] \hline\hline &&\\[-2ex]
			$2.032\cdot10^{-14}$ & $1.115\cdot10^{-14}$  & $4.015\cdot10^{-15}$
			\end{tabular}
		\end{table}
	
	\subsection{Dam break over triangular bottom}
		In the next test case, we consider a 1D dam break flow over triangular bottom topography. This setup was investigated experimentally in the CADAM project \cite{morris2000concerted}.
		
		The test was conducted in a $38\,m$ long and $1.75\,m$ wide channel, with a gate located at $15.5\,m$ and solid walls at each end.
		Initially, there is a reservoir with water height $0.75\,m$ upstream of the gate and another reservoir with height $0.15\,m$ downstream of the $6\,m$ long and $0.4\,m$ high triangular obstacle.
		To validate our numerical results we compare time series data of the water height with experimental data gathered at four gauge points, located $4\,m$ (G4), $10\,m$ (G10), $13\,m$ (G13) and $20\,m$ (G20) downstream of the dam gate. 
		The initial setup and a temporal evolution of the free surface are shown in Figure~\ref{fig:triangular_dam_break}. 
		\begin{figure}[htb]
			\centering
			\includegraphics[width=0.8\textwidth]{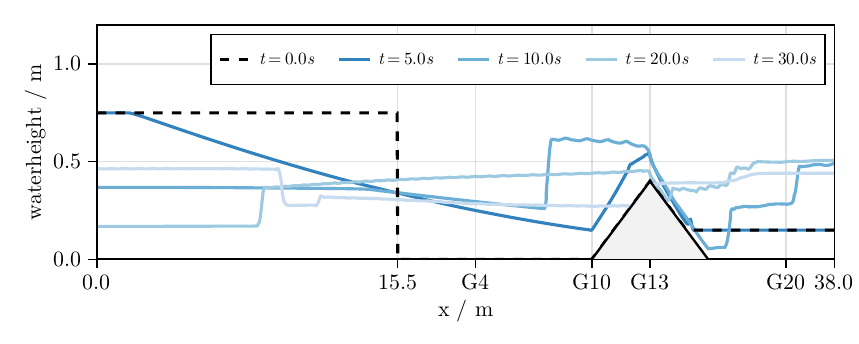}
			\caption{Initial condition and temporal evolution of the free surface for the triangular dam break setup at 	times $t=\{0s,5s,10s,20s,30s\}$ obtained with $N=4$, CFL=0.7 on 128 equidistant elements.}
			\label{fig:triangular_dam_break}
		\end{figure}
		
		For this test case we set a polynomial degree $N=4$ and discretize the domain with $128$ equidistant elements. 
		The solution is then computed up to time $t_{end} = 40\,s$ with a time step determined by $\text{CFL}=0.7$. 
		To account for bottom friction and ensure correct comparison with experimental data, we further introduce an additional source term for the Manning friction with Manning's coefficient set to $n=0.0125\,\frac{s}{m^{1/3}}$.
		
		In Figure~\ref{fig:time_series_gauges} we compare the time series from both experimental data with our numerical results for the water height at gauge points G4, G10, G13 and G20.
		
		After the initial instantaneous dam break an initial flood wave propagates downstream and creates an initial rise in the water level at gauge points G4 and G10.  
		The flood wave then reaches the obstacle and overtopping starts to occur at G13 around $4\,s$.
		The wave run up causes a partial reflection, which causes a wave propagating upstream that leads to a another sudden rise in the water level at G4 and G10.
		On the downstream side, the overtopping creates an inflow to the downstream reservoir.
		Here the flow is reflected at the downstream wall and leads to a second wave overtopping the crest (G13) in downstream direction around $22.5\,s$ and subsequently propagates to gauge points G10 and G4. 
		
		At all measurement locations we observe very good agreement of our numerical results with experimental data. 
		In particular, we see an accurate prediction of the arrival times through all gauge locations.
		The water height on the other hand is slightly over predicted upstream of the obstacle and underestimated in the 	downstream reservoir.
		Very similar results were obtained in \cite{gu2017swe}, where the authors mention the missing vertical 	acceleration of the SWE as a reason for the discrepancies.
		
		\begin{figure}[htb]
			\centering
			\includegraphics[width=0.75\textwidth]{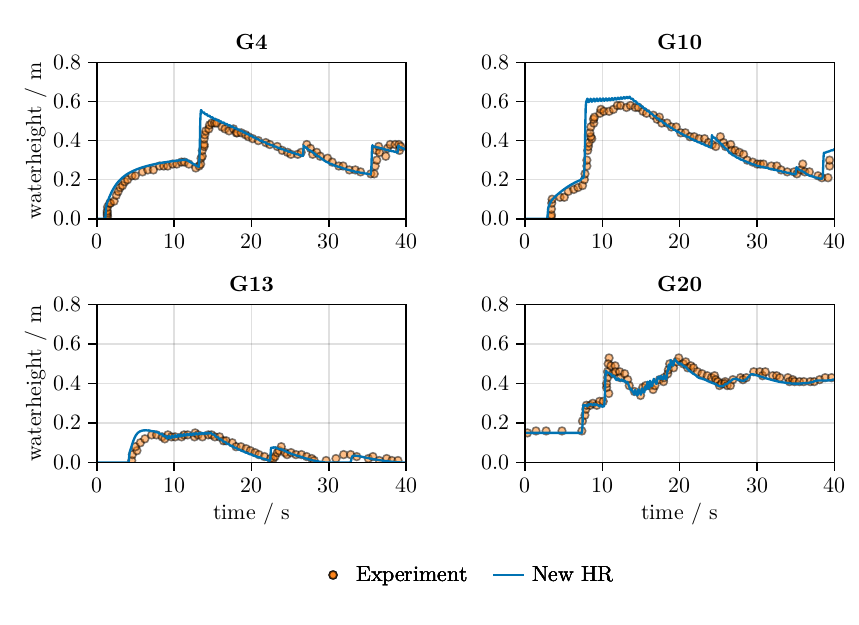}
			\caption{Comparison of computed water height against experimental data at different Gauge locations 	obtained with $N=4$, $\text{CFL}=0.7$ on $128$ equidistant elements.}
			\label{fig:time_series_gauges}
		\end{figure}

	\subsection{Multilayer dam break}
		Finally, we present results for a more complex dam break setup with three layers and a bottom topography on a curvilinear mesh. 
		The problem is computed on the domain $\Omega=[-1,1]^2$ on an unstructured curvilinear mesh with $20\times 20$ elements.
		The following initial condition for a dam break over initially dry bottom topography is prescribed
		\begin{equation}
			\begin{aligned}
			&\begin{cases}
				H_1 = 1.0, \quad H_2 = 0.8,  \quad H_3 = 0.6 & \text{if } x<0.0, \\
				H_1, H_2, H_3 = 0.0 & \text{else},
			\end{cases}
			\\
			&b = 1.4e^{-10(x^2+y^2)},\quad 
			v_m = 0.8, \quad w_m = 1.0, \quad m=1,2,3.
			\end{aligned}
		\end{equation}
		Furthermore, the gravity constant is set to $g=9.81$ and layer densities to $\rho_1 = 0.9$, $\rho_2 = 0.95$, $\rho_3 = 1.0$ and wall boundary conditions are used. 
		The problem is then computed up to final time $t_{end} = 2.0$ with $\text{CFL}=0.9$ and polynomial degree $N=4$. 
		We set a slightly higher threshold $\tau_{vel}=10^{-6}$ for the velocity desingularization to ensure a robust velocity computation in this complex test case.
		
		In Figure~\ref{fig:multilayer_dam_break}, we show the total layer heights and bottom topography at times $t=\{0.1, 1\}$ together with the shock-indicator $\alpha$ and a contour line indicating the wet/dry transition sampled at $\min(h_1,h_2,h_3)\approx 2\cdot10^{-15}$.
		\begin{figure}[htb]
		\centering
		\includegraphics[width=0.9\textwidth, trim={0 2cm 0 5cm}, clip]{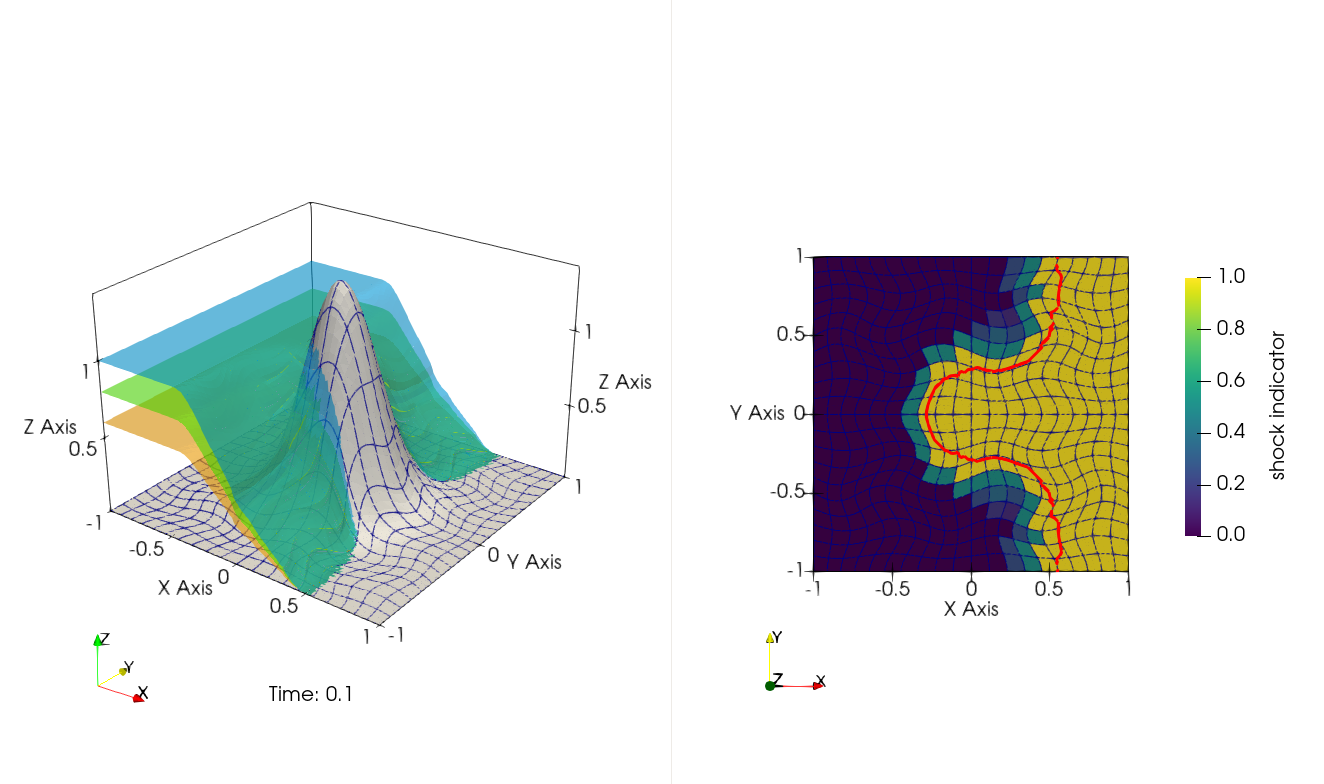}
		\includegraphics[width=0.9\textwidth, trim={0 2cm 0 5cm}, clip]{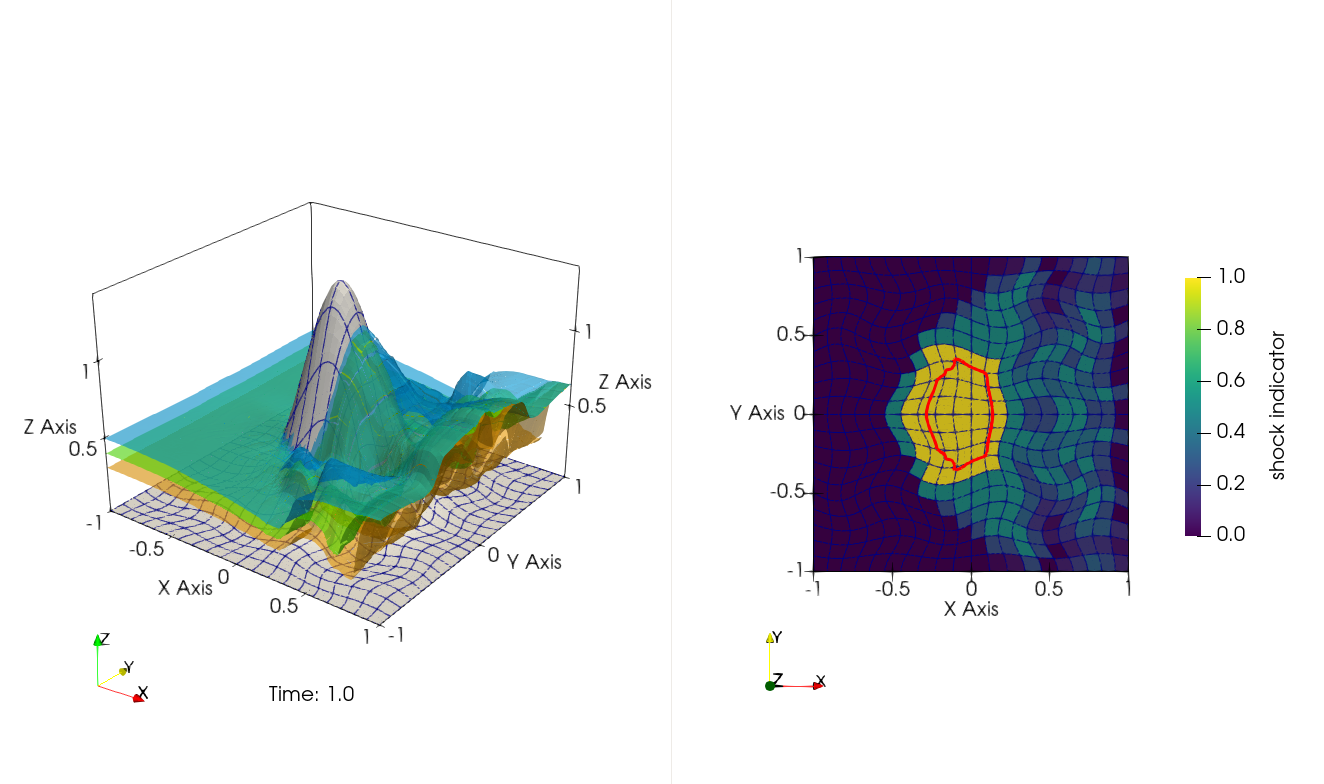}
		\caption{Interface heights (left) and shock indicator with a contour line indicating the wet/dry transition (right) at times $t=\{0.1, 1\}$ obtained with $N=4$ and $\text{CFL}=0.9$.}
		\label{fig:multilayer_dam_break}
		\end{figure}
	
		Initially, the discontinuity of the dam break gives rise to a flood wave over the initially dry domain with a propagating wet/dry transition in all layers, as shown for time $t=0.1$.
		The flood wave is then reflected at the interface located at $x=1$.
		In addition to interactions with the bottom topography this gives rise to intense layer interactions.
		As seen for time $t=1$, this triggers additional shock capturing behind the initial flood wave with indicator values $\alpha=0.001-0.5$, which successfully prevents spurious oscillations.
		In fully wet and smooth regions, the approximation remains fully high-order DG.
		
		In the dry domain and close to the wet/dry interface, we further observe pure FV elements with an indicator value $\alpha=1$ originating from the well-balanced treatment near dry nodes.
		While the FV elements properly capture dry nodes, the element-wise blending on large elements leads to a rather coarse introduction of the low-order method around the transition line.
		
		Even for this complex test case, the method remains robust and preserves positivity of all layer heights throughout the computation, which demonstrates the good properties of the scheme and its compatibility on curvilinear meshes.
		
\section{Conclusion}
	In this work, we introduced a novel hydrostatic reconstruction procedure that can be applied to both shallow water equations (SWE) and multilayer shallow water equations (ML-SWE) to construct well-balanced methods capable of handling wetting and drying.
	
	The new procedure is constructed with a single bottom topography reconstruction and can be applied for SWE and ML-SWE without introducing additional reconstruction terms.
	
	We first introduced a first-order path-conservative finite volume (FV) method and used the new hydrostatic reconstruction with entropy stable fluxes to create a scheme that is well-balanced, entropy stable and positivity-preserving.
	While the reconstruction was found to conserve entropy for the SWE, it introduces a conditional entropy violation for ML-SWE, which we quantified and countered with sufficient numerical dissipation to achieve entropy stability.
	
	We then extended the method to high-order and curvilinear meshes using a collocated nodal discontinuous Galer\-kin spectral element method (DGSEM) on quadrilateral elements.
	Using the summation-by-parts property of the DGSEM, we employ a flux-differencing formulation in the volume, that is applied with entropy conservative two-point fluxes and entropy stable fluxes at interfaces to ensure entropy stability.
	To achieve well-balancedness, we introduced the hydrostatic reconstruction at interfaces and switched to a compatible subcell FV method at dry nodes.
	Positivity was achieved with a positivity-preserving limiter under an additional time-step restriction.
	
	Finally, we verified the theoretical properties of convergence, well-balancedness, entropy stability and positivity-preservation through numerical results. 

\section*{Data Availability}
	A reproducibility repository with necessary instructions and code to reproduce the presented results is available on Zenodo and GitHub \cite{ersing2024hrRepro}

\section*{Declaration of competing interest}
	The authors declare that they have no known competing financial interests or personal relationships that could have appeared to influence the work reported in this paper.
	
\section*{Acknowledgement}	
	Funding: This work was supported by Vetenskapsrådet, Sweden [grant agreement 2020-03642 VR]	

\bibliographystyle{elsarticle-num}
\biboptions{sort&compress}	
\bibliography{bibliography.bib}

\end{document}